\newcounter{counter}
\newtheorem{lemma}[counter]{Lemma}
\newtheorem{theorem}[counter]{Theorem}
\newtheorem{remark}[counter]{Remark}
\newtheorem{definition}[counter]{Definition}
\newcommand{\bzeta}{\boldsymbol{\zeta}}
\begin{document}
\title{$p$-adic equidistribution and an application to $S$-units}
\author{Gerold Schefer}
\maketitle

\paragraph{Abstract:}We prove a Galois equidistribution result for torsion points in $\mathbb G_m^n$ in the $p$-adic setting for test functions of the form $\log |F|_p$ where $F$ is a nonzero polynomial with coefficients in the $p$-adic numbers. Our result includes a power saving quantitative estimate of the decay rate rate of the equidistribution. As an application we show that Ih's Conjecture is true for a class of divisors of $\mathbb G_m^n$.

\section{Introduction}
Let $n\ge 1$ be an integer and let $\mathbb G_m^n$ denote the algebraic torus with some algebraically closed base field $K$. We will identify $\mathbb G_m^n$ with the group $(K\setminus \{0\})^n$. For a point $\boldsymbol\zeta\in \mathbb G_m^n$ of finite order we define
$$\delta(\boldsymbol\zeta) = \inf\{|\mathbf a|: \mathbf a\in\mathbb Z^n\setminus \{0\} \text{ with }\boldsymbol\zeta^{\mathbf a}=1\}$$ which measures the multiplicative dependence of the entries of $\boldsymbol\zeta$; the notation $\boldsymbol\zeta^{\mathbf a}$ is given in Section \ref{sec:notation} and $|\cdot|$ denotes the maximum norm.

Let us assume for the moment that $K=\mathbb C$. For any $P\in\mathbb C[X_1^{\pm 1},\dots, X_n^{\pm 1}]\setminus \{0\}$ the integral $$m(P)=\int_{[0,1)^d}\log|P(\mathbf {e(x)})| d\mathbf x$$ exists and is called the logarithmic Mahler measure of $P$. Here $\mathbf {e(x)}=(e^{2\pi i x_1},\dots, e^{2\pi i x_n})$ for any ${\mathbf x=(x_1,\dots,x_n)\in \mathbb R^n}$. It makes even sense for $\mathbf x\in (\mathbb R/\mathbb Z)^n$.
A torsion coset of $\mathbb G_m^n$ is the translate of a connected algebraic subgroup of $\mathbb G_m^n$ by a point of finite order. We call a torsion coset proper, if it does not equal $\mathbb G_m^n$. Let $S^1=\{z\in\mathbb C: |z|=1\}$ denote the unit circle. We call ${P\in\mathbb C[X_1^{\pm 1},\dots, X_n^{\pm 1}]}$ essentially atoral if the Zariski closure of $$\{(z_1,\dots,z_n)\in (S^1)^n: P(z_1,\dots, z_n)=0\}$$ in $\mathbb G_m^n$ is a finite union of irreducible algebraic sets of codimension at least $2$ and proper torsion cosets. Let $\overline{\mathbb Q}$ denote the algebraic closure of $\mathbb Q$ in $\mathbb C$. Let $P\in\overline{\mathbb Q}[X_1,\dots, X_n]\setminus \{0\}$. By the Manin-Mumford Conjecture for $\mathbb G_m^n$ which was proven by Laurent in \cite{Laurent}, the Zariski closure of the torsion points in $V=\{\mathbf x\in\mathbb G_m^n: P(\mathbf x)=0\}$ is a finite union of torsion cosets. Let $\boldsymbol\zeta\in\mathbb G_m^n$ be a torsion point of order $N$ and $H<\mathbb G_m^n$ a connected algebraic subgroup. We claim that there is a constant $C>0$ such that for all torsion points $\boldsymbol\xi\in \boldsymbol\zeta H$ we have $\delta(\boldsymbol\xi)\le C$. By Corollary 3.2.15 in \cite{BombGub} there is a non trivial subgroup $\Lambda<\mathbb Z^n$ such that $H=\{ \mathbf x\in\mathbb G_m^n: \mathbf x^{\boldsymbol\lambda} = 1\text{ for all }\boldsymbol\lambda\in\Lambda\}$. Let $\boldsymbol \xi=\boldsymbol \zeta\mathbf h\in \boldsymbol\zeta H$ be a torsion point and $\boldsymbol\lambda\in\Lambda\setminus\{0\}$. Then we have $$\boldsymbol\xi^{N\boldsymbol\lambda}=\left(\boldsymbol\zeta^N\right)^{\boldsymbol\lambda}\left(\boldsymbol h^{\boldsymbol\lambda}\right)^N=1 \text{ and therefore }\delta(\boldsymbol\xi)\le N|\boldsymbol\lambda|.$$ This implies that there is a constant $D>0$ such that for every torsion point $\boldsymbol \xi\in V$ we have $\delta(\boldsymbol\xi)\le D$. So in particular we have $P(\boldsymbol\xi)\ne 0$ if $\delta(\boldsymbol\xi)>D$. The main result in \cite{Atoral} is
\begin{theorem}\label{thm:atoral}For each essentially atoral $P\in \overline{\mathbb Q}[X_1^{\pm1},\dots,X_n^{\pm 1}]\setminus \{0\}$ there exists $\kappa>0$ with the following property. Suppose $\boldsymbol\zeta\in\mathbb G_m^n(\mathbb C)$ has finite order and $\delta(\boldsymbol\zeta)$ is sufficiently large. Then $P(\sigma(\boldsymbol\zeta))\ne 0$ for all $\sigma\in\mathrm{Gal}(\mathbb Q(\boldsymbol\zeta)/\mathbb Q)$ and $$\frac{1}{[\mathbb Q(\boldsymbol\zeta):\mathbb Q]}\sum_{\sigma\in\mathrm{Gal}(\mathbb Q(\boldsymbol\zeta)/\mathbb Q)} \log |P(\sigma(\boldsymbol\zeta))| = m(P)+O(\delta(\boldsymbol\zeta)^{-\kappa})$$ as $\delta(\boldsymbol\zeta)\to\infty$, where the implicit constant depends only on $n$ and $P$.\end{theorem}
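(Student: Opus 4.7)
The natural approach is to combine Fourier analysis on $(S^1)^n$ with quantitative character-sum estimates over the Galois orbit of $\bzeta$. Writing $N$ for the order of $\bzeta$, the Galois group $\mathrm{Gal}(\mathbb Q(\bzeta)/\mathbb Q)$ is a quotient of $(\mathbb Z/N\mathbb Z)^*$ acting by $\bzeta\mapsto\bzeta^k$. For each character $\chi_{\mathbf m}(\mathbf z)=\mathbf z^{\mathbf m}$ with $\mathbf m\in\mathbb Z^n$, the orbit average
$$\frac{1}{[\mathbb Q(\bzeta):\mathbb Q]}\sum_{\sigma}\chi_{\mathbf m}(\sigma(\bzeta))$$
reduces to a Ramanujan-type sum depending on the order of $\bzeta^{\mathbf m}$; the hypothesis $\delta(\bzeta)>M$ forces $\bzeta^{\mathbf m}\ne 1$ for every nonzero $\mathbf m$ with $|\mathbf m|\le M$, and a M\"obius-type estimate then bounds these character sums by a negative power of $\delta(\bzeta)$ on average.

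On the analytic side one expands $\log|P(\mathbf e(\mathbf x))|$ in a Fourier series with constant term $m(P)$. If $P$ had no zeros on $(S^1)^n$ the coefficients would decay faster than any polynomial, so truncation at frequency $M$ together with the character-sum bound above would immediately yield the theorem. The essentially atoral hypothesis is precisely what allows this strategy to survive in the general case: the real zero locus $Z=\{P=0\}\cap (S^1)^n$ is contained, up to a codimension-$2$ subvariety of $\mathbb G_m^n$, in a finite union of proper torsion cosets $\mathbf t_j H_j$. Away from an $\varepsilon$-neighbourhood $U_\varepsilon$ of $Z$ the function $\log|P|$ is smooth, so one writes
$$\log|P|=(1-\chi_\varepsilon)\log|P|+\chi_\varepsilon\log|P|$$
with $\chi_\varepsilon$ a smooth cutoff supported on $U_\varepsilon$, and handles the first piece by Fourier truncation.

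The main technical difficulty, and the step I expect to be most delicate, is controlling the contribution of $\chi_\varepsilon\log|P|$ on the Galois orbit of $\bzeta$. For each proper torsion coset $\mathbf t_j H_j$ a quantitative avoidance estimate is needed: the number of $\sigma\in\mathrm{Gal}(\mathbb Q(\bzeta)/\mathbb Q)$ such that $\sigma(\bzeta)$ lies within distance $\varepsilon$ of $\mathbf t_j H_j$ should be at most $[\mathbb Q(\bzeta):\mathbb Q]$ times a power of $\varepsilon$ and of the complexity of $\mathbf t_j H_j$, with an explicit dependence on $\delta(\bzeta)$. This is precisely what enforces $P(\sigma(\bzeta))\ne 0$ at $\varepsilon=0$ for $\delta(\bzeta)$ large, and combined with the integrable $L^1$-singularity of $\log|P|$ along $Z$ it yields a $\chi_\varepsilon$-contribution of size $\varepsilon^{\alpha}\log(1/\varepsilon)$; the codimension-$2$ piece of $Z$ is then absorbed by a dimensional argument. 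Balancing $M=\delta(\bzeta)^{\kappa_1}$ against $\varepsilon=\delta(\bzeta)^{-\kappa_2}$ optimises the three error terms and yields the required power saving $O(\delta(\bzeta)^{-\kappa})$ with an implicit constant depending only on $n$ and $P$.
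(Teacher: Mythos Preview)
This theorem is not proved in the present paper; it is quoted as the main result of \cite{Atoral} (Dimitrov--Habegger), and the paper only describes its strategy in order to adapt it to the $p$-adic setting. So there is no ``paper's own proof'' to compare against directly, but the strategy of \cite{Atoral}, as summarised here and carried out in detail for the $p$-adic analogue in Section~\ref{sec:multVariate}, is quite different from what you propose.

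The approach of \cite{Atoral} does \emph{not} proceed via Fourier truncation and smooth cutoffs. Instead it uses geometry of numbers to produce a factorisation $\boldsymbol\zeta=\boldsymbol\eta\boldsymbol\xi$ (Proposition~5.1 in \cite{Atoral}, restated here as Lemma~\ref{lem:decAtoral}) together with a matrix $V\in\mathrm{GL}_n(\mathbb Z)$ so that $\boldsymbol\xi^V$ has most coordinates equal to $1$ and the remaining ones are powers of a single root of unity $\omega_M$. One then substitutes to obtain a \emph{univariate} polynomial $Q_\tau(X)=F(\tau(\boldsymbol\eta)X^{\mathbf u^\top})$ and reduces the Galois average to a one-variable problem; the multivariate result is assembled by induction on $n$ (Lemma~\ref{lem:atoral87} and the proof of Theorem~\ref{thm:padic} here mirror this). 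The essentially atoral hypothesis enters in the univariate step, where one needs lower bounds for $|Q_\tau(\sigma(\omega_M))|$.

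Your Fourier/cutoff outline is a reasonable heuristic but, as stated, has a real gap at the arithmetic step. For a single frequency $\mathbf m$ with $0<|\mathbf m|\le M<\delta(\boldsymbol\zeta)$, the orbit average of $\chi_{\mathbf m}$ equals $\mu(d)/\varphi(d)$ where $d=\mathrm{ord}(\boldsymbol\zeta^{\mathbf m})$; the hypothesis only guarantees $d>1$, so this can be of size $1$ (take $d=2$). There is no pointwise power saving in $\delta(\boldsymbol\zeta)$, and the phrase ``on average'' hides the real work: one has to show that the $\mathbf m$ with small $d$ are sparse in a way compatible with the Fourier decay, which is essentially the geometry-of-numbers input that \cite{Atoral} packages into the factorisation lemma. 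Likewise, your ``quantitative avoidance estimate'' --- that few $\sigma(\boldsymbol\zeta)$ lie near a fixed torsion coset --- is not an auxiliary lemma but the crux of the matter; proving it with a power saving already requires something of the strength of the theorem itself. So the sketch identifies the right obstacles but does not supply the mechanism that \cite{Atoral} uses to overcome them.
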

The aim of this note is to prove a $p$-adic analogue of Theorem \ref{thm:atoral} for a prime $p$. We work over $K=\mathbb C_p$, a fixed completion of $\overline{\mathbb Q}_p,$ where $\mathbb Q_p$ is the field of the $p$-adic numbers. Let ${F\in \mathbb C_p[X_1^{\pm 1},\dots, X_n^{\pm 1}]}$. The analogue of the Mahler measure is $\log|F|_p$, where $|F|_p$ is the maximum of the $p$-adic absolute values of its coefficients also known as the $p$-adic Gauss norm. Proposition 2.2 in \cite{Ih} can be formulated as follows.
\begin{theorem}
	Let $K\subseteq {\mathbb C_p}$ be a number field and $F\in K[X_1,\dots, X_n]\setminus\{0\}$. Let $(\bzeta_k)_{k\in\mathbb N}$ be a sequence of points of finite order in $\mathbb G_m^n(\mathbb C_p)$ such that $\lim_{k\to\infty} \delta(\bzeta_k) = \infty$ and such  that $F(\sigma(\bzeta_k))\ne 0$ for all $\sigma\in \mathrm{Gal}(\mathbb Q(\bzeta_k)/\mathbb Q(\bzeta_k)\cap K)$. Then we have
	$$\lim_{k\to\infty}\frac{1}{[\mathbb{Q}(\bzeta_k):\mathbb Q(\bzeta_k)\cap K]}\sum_{\sigma\in\mathrm{Gal}(\mathbb{Q}(\bzeta_k):\mathbb Q(\bzeta_k)\cap K)}\log|F(\sigma(\bzeta_k))|_p = \log|F|_p.$$
\end{theorem}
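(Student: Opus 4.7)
The plan is to normalize so that the $p$-adic Gauss norm is $1$, pass to reductions modulo the maximal ideal $\mathfrak m$ of the valuation ring $R \subset \mathbb C_p$, identify the negatively contributing Galois conjugates via the vanishing locus of $\bar F$, and split the analysis into a prime-to-$p$-dominant regime (handled via Manin--Mumford in positive characteristic) and a $p$-power-dominant regime (handled by a direct $p$-adic expansion).

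After rescaling by a coefficient of maximal $p$-adic absolute value, I may assume $F \in R[X_1,\dots,X_n]$ with nonzero reduction $\bar F \in \bar{\mathbb F}_p[X_1,\dots,X_n]$; the target becomes $\lim_k \frac{1}{[L_k:F_k]}\sum_\sigma \log|F(\sigma(\bzeta_k))|_p = 0$ with $L_k = \mathbb Q(\bzeta_k)$ and $F_k = L_k \cap K$. Each coordinate of a torsion point is a unit in $R$, so the ultrametric inequality yields $|F(\sigma(\bzeta_k))|_p \le 1$, making each summand $\le 0$ and leaving only a matching lower bound to prove. Decompose $\bzeta_k = \eta_k\xi_k$ with $\eta_k$ of $p$-power order $p^{a_k}$ and $\xi_k$ of prime-to-$p$ order $m_k$; since every $p$-power root of unity is $\equiv 1 \pmod{\mathfrak m}$, we have $\overline{\bzeta_k} = \overline{\xi_k}$, and $\sigma$ is bad precisely when $\bar F(\overline{\sigma(\xi_k)}) = 0$. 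By the Manin--Mumford/Laurent theorem applied in positive characteristic to prime-to-$p$ torsion, such reductions are confined to a finite union of proper torsion cosets of $\mathbb G_m^n(\bar{\mathbb F}_p)$, each defined by relations $X^{\mathbf b} = \omega$ with $|\mathbf b|$ bounded in terms of $\bar F$. Parametrising the conjugates by $\bzeta_k \mapsto \bzeta_k^a$ for $a \in H_k \subseteq (\mathbb Z/N_k\mathbb Z)^\times$, the number of bad $a$ for a fixed coset is at most $|H_k|/\mathrm{ord}(\overline{\xi_k}^{\mathbf b})$.

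In the prime-to-$p$-dominant regime $\delta(\bzeta_k)/p^{a_k} \to \infty$, the inequality $\mathrm{ord}(\xi_k^{\mathbf b}) \ge \delta(\bzeta_k)/(p^{a_k}|\mathbf b|)$, coming from $\xi_k^{c\mathbf b} = 1 \Rightarrow \bzeta_k^{p^{a_k}c\mathbf b} = 1$, forces the bad fraction to vanish, and a uniform $O(1)$ averaged bound on $-\log|F(\sigma(\bzeta_k))|_p$ via the product formula applied to $N_{L_k/F_k}(F(\bzeta_k)) \in F_k$ completes this case. In the complementary $p$-power-dominant regime $p^{a_k} \gg \delta(\bzeta_k)$, the difference
\[
F(\sigma(\bzeta_k)) - F(\sigma(\xi_k)) = \sum_{\mathbf a} c_{\mathbf a}(\sigma\xi_k)^{\mathbf a}((\sigma\eta_k)^{\mathbf a}-1)
\]
has $p$-adic valuation of order $1/\phi(p^{a_k})$, using $v_p(\zeta_{p^a}-1) = 1/\phi(p^a)$; combined with $v_p(F(\sigma(\xi_k))) \ge 1$ in the unramified local field $\mathbb Q_p(\sigma(\xi_k))$, the ultrametric inequality forces each bad summand to have size $O(1/\phi(p^{a_k})) \to 0$, so the average vanishes even when every conjugate is formally bad.

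The main obstacle is patching the two regimes uniformly and making the Manin--Mumford input quantitative: one must upgrade the qualitative statement that bad reductions lie in finitely many torsion cosets to an explicit bound on the bad fraction in $H_k$, and one must handle possible non-generic cancellations in the $p$-power expansion where the leading correction term could vanish to higher order than expected, requiring a slightly finer expansion to retain control of the valuation.
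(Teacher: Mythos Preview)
Your prime-to-$p$ regime rests on invoking ``Manin--Mumford/Laurent in positive characteristic for prime-to-$p$ torsion,'' and this step fails. Every element of $\mathbb G_m^n(\bar{\mathbb F}_p)$ is already a (prime-to-$p$) torsion point, so the torsion locus of $V(\bar F)$ is all of $V(\bar F)(\bar{\mathbb F}_p)$---an entire hypersurface, not a finite union of proper torsion cosets. For instance with $\bar F = X_1+X_2-1$ the bad reductions fill out a curve, and no bound of the form $|\mathbf b|\le C(\bar F)$ exists. Your density estimate on bad conjugates therefore collapses. Even granting a small bad fraction, the product-formula input only gives an $O(1)$ bound on the \emph{total} average, not $o(1)$; without a uniform per-term bound on $-\log|F(\sigma(\bzeta_k))|_p$ you cannot conclude. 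The $p$-power regime has an analogous soft spot you partly flag: the difference $F(\sigma\bzeta_k)-F(\sigma\xi_k)$ has valuation $\ge 1/\phi(p^{a_k})$, not necessarily of that exact order, and $v_p(F(\sigma\xi_k))$ is only bounded \emph{below} (it can be arbitrarily large, or infinite if $F(\sigma\xi_k)=0$), so the ultrametric inequality does not force $v_p(F(\sigma\bzeta_k))$ to be small.

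The paper does not prove this statement directly (it is quoted from Ih), but its Theorem~\ref{thm:padic} is strictly stronger and the method is quite different from yours. The missing ingredient is the Tate--Voloch theorem (Theorem~\ref{thm:TV}): for fixed $F$ there is a constant $c(F)$ with $-\log|F(\boldsymbol\zeta)|_p + \log|F|_p \le c(F)$ uniformly over all torsion $\boldsymbol\zeta$ off the zero set. This single bound replaces your positive-characteristic Manin--Mumford appeal entirely. With it in hand the paper reduces (via a lattice decomposition $\boldsymbol\zeta=\boldsymbol\eta\boldsymbol\xi$ and a monomial change of coordinates) to the univariate case, factors $F=a_0\prod(X-\alpha_i)$, and for each root with $|\alpha_i|_p=1$ picks the closest conjugate $\sigma_0(\zeta)$; the remaining $\sigma$ are split according to whether $\sigma(\zeta)/\sigma_0(\zeta)$ has order prime to $p$ (then $|\sigma(\zeta)-\alpha_i|_p=1$) or $p$-power order (then the product $\prod_{\xi\in\mu_{p^k}\setminus\{1\}}(\xi-1)$ contributes exactly $-k\log p$). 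The resulting error $\deg(F)(c(F)+k\log p)/\#G$ tends to $0$ once one observes that the index and conductor of $G=\mathrm{Gal}(\mathbb Q(\bzeta_k)/\mathbb Q(\bzeta_k)\cap K)$ are bounded solely in terms of $K$.
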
 Using the same strategy as in \cite{Atoral} we can improve this result. We do not need the algebraicity of the coefficients and we are able to give a bound for the error. Indeed we can prove

\begin{theorem}\label{thm:pSimple}For every $F\in \mathbb C_p[X_1^{\pm1},\dots,X_n^{\pm 1}]\setminus \{0\}$ there exists $\kappa>0$ with the following property. Suppose $\boldsymbol\zeta\in\mathbb G_m^n$ has finite order and $\delta(\boldsymbol\zeta)$ is sufficiently large in terms of $F$. Then $F(\sigma(\boldsymbol\zeta))\ne 0$ for all $\sigma\in\mathrm{Gal}(\mathbb Q(\boldsymbol\zeta)/\mathbb Q)$ and $$\frac{1}{[\mathbb Q(\boldsymbol\zeta):\mathbb Q]}\sum_{\sigma\in\mathrm{Gal}(\mathbb Q(\boldsymbol\zeta)/\mathbb Q)} \log |F(\sigma(\boldsymbol\zeta))|_p = \log |F|_p+O(\delta(\boldsymbol\zeta)^{-\kappa})$$ as $\delta(\boldsymbol\zeta)\to\infty$, where the implicit constant depends only on $n$ and $F$.\end{theorem}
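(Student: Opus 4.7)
I would follow the strategy of \cite{Atoral}, replacing the Fourier analysis on the unit torus with reduction modulo the maximal ideal of $\mathbb C_p$. After dividing $F$ by one of its coefficients of maximal $p$-adic absolute value one may assume $|F|_p = 1$, so that $F$ has coefficients in the ring of integers $O_{\mathbb C_p}$ and admits a nonzero reduction $\overline F \in \overline{\mathbb F}_p[X_1^{\pm 1},\ldots,X_n^{\pm 1}]$. Since every root of unity is a $p$-adic unit, the ultrametric inequality gives $|F(\sigma(\bzeta))|_p \le 1 = |F|_p$ for every Galois conjugate, with equality precisely when $\overline F$ does not vanish at the reduction $\overline{\sigma(\bzeta)}$. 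The problem splits into (a) counting the ``bad set'' $B \subseteq \mathrm{Gal}(\mathbb Q(\bzeta)/\mathbb Q)$ of $\sigma$ where $\overline F(\overline{\sigma(\bzeta)}) = 0$, and (b) bounding from below the individual contributions $\log|F(\sigma(\bzeta))|_p$ for $\sigma \in B$.

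\textbf{Counting bad conjugates via equidistribution mod $p$.} The crux of the proof, which I anticipate is the paper's main equidistribution theorem, is a quantitative bound on $|B|$. Write $N = p^k m$ with $\gcd(m,p)=1$ and decompose $\bzeta = \bzeta_{p^k}\bzeta_m$; the $p$-primary factor reduces to $\mathbf 1$ so only $\bzeta_m$ enters the reduction, and multiplying any relation on $\bzeta_m$ by $p^k$ yields one on $\bzeta$, giving the transfer $\delta(\bzeta_m)\ge \delta(\bzeta)/p^k$. The task reduces to bounding, for fixed $\overline F$, the number of $a\in (\mathbb Z/m\mathbb Z)^\times$ for which $\overline F(\overline{\bzeta_m}{}^{\,a})=0$. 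Fixing a primitive $m$-th root of unity $\omega\in\overline{\mathbb F}_p$ and writing $\overline{\bzeta_m}=(\omega^{b_1},\ldots,\omega^{b_n})$, this becomes vanishing at $\omega^a$ of the one-variable Laurent polynomial $g(T)=\sum_{\mathbf j} c_{\mathbf j}\,T^{\mathbf b\cdot\mathbf j}$, whose coefficients are partial sums of coefficients of $\overline F$. Largeness of $\delta(\bzeta_m)$ prevents the collapse of $\overline F$'s support from killing $g$, and a count of zeros of sparse polynomials at roots of unity (an Erd\H{o}s--Turán / Conway--Jones style bound) yields the required power saving; the degenerate case where $\overline F$ vanishes on a positive-dimensional torsion coset through $\overline{\bzeta_m}$ is excluded by a positive-characteristic Bogomolov-type input forcing $\delta(\overline{\bzeta_m})$ bounded.

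\textbf{Lower bound and assembly.} For each bad $\sigma$, approximate $F$ by $\widetilde F\in\overline{\mathbb Q}[X^{\pm 1}]$ with the same support so that $|F-\widetilde F|_p$ is smaller than a certain Liouville threshold and $|\widetilde F|_p=1$; then the ultrametric gives $|F(\sigma(\bzeta))|_p=|\widetilde F(\sigma(\bzeta))|_p$. The nonzero algebraic number $\widetilde F(\sigma(\bzeta))\in\mathbb Q(\bzeta)$ has height bounded uniformly in $\bzeta$, and the product formula yields
$$\log|\widetilde F(\sigma(\bzeta))|_p \;\ge\; -\frac{[\mathbb Q(\bzeta):\mathbb Q]}{[\mathbb Q(\bzeta)_{\mathfrak p}:\mathbb Q_p]}\bigl(h(\widetilde F)+O(1)\bigr),$$
and the ratio is $\le\varphi(m)$ by total ramification of $p$ in the $p^k$-th cyclotomic tower. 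Combining, the Galois average is bounded in absolute value by $(|B|/\varphi(N))\cdot\varphi(m)\cdot O(1)=|B|\cdot O(1)/\varphi(p^k)$; since the bad set $B$ is a union of fibers of $\mathrm{Gal}(\mathbb Q(\bzeta)/\mathbb Q)\twoheadrightarrow (\mathbb Z/m\mathbb Z)^\times$ of size $\varphi(p^k)$, the equidistribution of the previous step makes this $O(\delta(\bzeta)^{-\kappa})$. The same argument forces $F(\sigma(\bzeta))\ne 0$ for all $\sigma$ once $\delta(\bzeta)$ is large enough that the bound becomes less than the Liouville floor.

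\textbf{Main obstacle.} The hardest step is the quantitative equidistribution modulo $p$: upgrading the qualitative Ih-type statement in the literature to a power-saving bound in $\delta(\bzeta)$ demands a careful count of the zeros of a sparse Laurent polynomial at roots of unity whose order need not be coprime to its degree, together with a positive-characteristic ``unlikely intersections'' input to handle $\overline F$'s torsion-coset degeneracies uniformly in the hypersurface $\{\overline F=0\}$.
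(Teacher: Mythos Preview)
Your reduction-mod-$p$ strategy has a genuine gap when the $p$-primary part of the order dominates. Write $N=p^km$ with $p\nmid m$; your bad-set count sees only $\bzeta_m$, and the transfer $\delta(\bzeta_m)\ge\delta(\bzeta)/p^k$ is vacuous once $p^k$ is comparable to $\delta(\bzeta)$. Concretely take $n=1$, $F=X-1$, and $\zeta$ of order $p^k$. Then $m=1$, every conjugate reduces to $1\in\overline{\mathbb F}_p$, so $B$ is the full Galois group and $|B_m|=1$; your assembled bound $|B|\cdot O(1)/\varphi(p^k)=O(1)$ is bounded but does not decay as $\delta(\zeta)=p^k\to\infty$. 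What actually happens is that although each $|\sigma(\zeta)-1|_p<1$, the \emph{sum} satisfies $\sum_{\xi\in\mu_{p^k}\setminus\{1\}}\log|\xi-1|_p=\log|p^k|_p=-k\log p$, which is only logarithmic in $\delta$, so the true average is $O((\log\delta)/\delta)$. No per-term bound can see this cancellation; you must bound the sum over the $p$-power piece directly.

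The paper does exactly that. In one variable it factors $F=a_0\prod(X-\alpha_i)$ and, for each root $\alpha$ with $|\alpha|_p=1$, picks the conjugate $\sigma_0(\zeta)$ nearest $\alpha$; conjugates with $\mathrm{ord}(\sigma(\zeta)/\sigma_0(\zeta))$ divisible by a prime $\ne p$ contribute $0$ by the ultrametric, and the remaining $p$-power piece is bounded by $-k\log p$ via $\prod_{i=1}^{p^k-1}(1-\rho^i)=p^k$ (Lemmas~\ref{lem:philIn1} and~\ref{lem:alphaabs1}). The single term at $\sigma_0$ is controlled not by Liouville but by Tate--Voloch (Theorem~\ref{thm:TV}), which for any $F\in\mathbb C_p[X_1^{\pm1},\dots,X_n^{\pm1}]$ gives a constant $c(F)$ with $|F(\bzeta)|_p\ge e^{-c(F)}$ whenever $\bzeta$ is torsion and $F(\bzeta)\ne0$. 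Your Liouville-via-approximation substitute is circular for transcendental coefficients: the precision you need on $\widetilde F$ is governed by a floor of size $p^{-O(\varphi(m)h(\widetilde F))}$, and forcing $|F-\widetilde F|_p$ below it drives $h(\widetilde F)$ up, which lowers the floor again. Tate--Voloch is precisely the structural input that removes the ``essentially atoral'' hypothesis in the $p$-adic setting, and the multivariate case is then reduced to the univariate one by the lattice decomposition of \cite{Atoral}, not by positive-characteristic equidistribution.
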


A more general theorem is given in Theorem \ref{thm:padic}. Note that no analogue of the hypothesis being essentially atoral is needed. The reason is the following theorem of Tate and Voloch, Theorem 2 in \cite{TateVoloch}

\begin{theorem}\label{thm:TV}For every integer $n\ge 1$ and every family $a_1,\dots, a_n\in\mathbb C_p$, there exists a constant $c>0$ such that, for any $\zeta_1,\dots, \zeta_n$ roots of unity in $\mathbb C_p$, either $\sum_{i=1}^n \zeta_i a_i=0$ or $|\sum_{i=1}^n \zeta_i a_i|_p \ge c.$\end{theorem}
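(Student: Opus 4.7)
The strategy relies on the canonical decomposition of roots of unity in $\mathbb{C}_p$: every $\zeta \in \mu_\infty(\mathbb C_p)$ has a unique factorization $\zeta = \omega \eta$ with $\omega$ a Teichm\"uller lift of an element of $\overline{\mathbb F_p}^\times$ and $\eta$ a $p$-power root of unity. Two pointwise bounds coming from the ramification of the cyclotomic tower will drive the argument: (i) distinct Teichm\"uller lifts $\omega \ne \omega'$ satisfy $|\omega-\omega'|_p = 1$ because their reductions to $\overline{\mathbb F_p}$ are distinct, and (ii) every non-trivial $p$-power root of unity $\eta$ satisfies $|1-\eta|_p \ge p^{-1/(p-1)}$.

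After dropping zero coefficients and rescaling by a coefficient of maximal absolute value, one may assume $\max_i |a_i|_p = 1$. Write $\zeta_i = \omega_i \eta_i$ and group the indices by Teichm\"uller part: let $\tau_1,\dots,\tau_r$ be the distinct values of the $\omega_i$, set $I_k = \{i : \omega_i=\tau_k\}$ and $T_k = \sum_{i\in I_k} a_i \eta_i$, so that $S := \sum_i a_i \zeta_i = \sum_{k=1}^r \tau_k T_k$. Reduction modulo the maximal ideal of $\mathcal O_{\mathbb C_p}$, where every $\bar\eta_i = 1$, yields $\bar S = \sum_i \bar a_i \bar\omega_i \in \overline{\mathbb F_p}$. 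When $\bar S \ne 0$ we immediately obtain $|S|_p = 1$ and the theorem holds with $c = 1$. The remaining, delicate case is genuine cancellation in the residue field.

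The analytic core, which I expect to be the main obstacle, is the following lemma for $p$-power roots of unity: for any fixed $b_1,\ldots,b_m \in \mathbb C_p$ there exists $c'>0$ such that for every choice of $p$-power roots $\eta_1,\dots,\eta_m \in \mathbb C_p$, either $\sum_j b_j \eta_j = 0$ or $|\sum_j b_j \eta_j|_p \ge c'$. I would prove this by induction on $m$: after dividing through by one $\eta_{j_0}$ the sum becomes $b_{j_0}+\sum_{j\ne j_0} b_j \eta_j'$ with each $\eta_j'$ still a $p$-power root, and one iterates (ii) together with the fact that the value group of any finite subextension of $\mathbb Q_p(\mu_{p^\infty})$ is a discrete subgroup of $\mathbb Z[1/p]$ to control the possible cancellations at each ramification level. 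Once this wild-root lemma is established, the residual case $\bar S = 0$ is settled by a Witt-vector expansion of the sums of Teichm\"uller lifts $\sum_i a_i \omega_i$, whose successive graded pieces take only finitely many values for fixed $a_i$'s and a given residue-field pattern; this forces, whenever $S \ne 0$, the value $|S|_p$ into an explicit positive discrete subset of the value group of $\mathbb C_p$ depending only on $a_1,\dots,a_n$.
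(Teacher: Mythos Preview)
The paper does not prove this statement at all: Theorem~\ref{thm:TV} is quoted verbatim as Theorem~2 of Tate--Voloch \cite{TateVoloch} and used as a black box (it is only invoked in Lemma~\ref{lem:inf} to guarantee that $\inf(F)>0$). There is therefore no in-paper argument to compare your proposal against.

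Turning to your sketch on its own merits: the tame/wild decomposition $\zeta_i=\omega_i\eta_i$ and the observation that $\bar S\ne 0$ forces $|S|_p=1$ are correct and standard. The gap is in the ``analytic core''. Your proposed induction on $m$ for the wild-root lemma does not close. After dividing through by $\eta_{j_0}$ you arrive at $b_{j_0}+T$ with $T=\sum_{j\ne j_0} b_j\eta_j'$; the induction hypothesis tells you only that $T=0$ or $|T|_p\ge c''$, and this gives no control in the critical case $|T|_p=|b_{j_0}|_p$, where $|b_{j_0}+T|_p$ can a priori be arbitrarily small. Nothing you have written excludes a sequence of choices of the $\eta_j'$ for which $T\to -b_{j_0}$ without ever equalling it. The appeal to discreteness of the value group of a \emph{finite} sublayer of $\mathbb Q_p(\mu_{p^\infty})$ does not help, because the $\eta_j'$ range over the whole tower and the union of those value groups is $p^{\mathbb Z[1/p]}$, which is dense in $\mathbb R_{>0}$. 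Likewise the ``Witt-vector expansion'' step is not well posed: the coefficients $a_i$ lie in $\mathbb C_p$, not in $W(\overline{\mathbb F_p})$, so $\sum_i a_i\omega_i$ has no canonical Witt expansion with graded pieces taking only finitely many values. Finally, even granting the wild-root lemma, the passage back to $S=\sum_k\tau_k T_k$ is not finite-dimensional: the $T_k$ themselves vary over infinitely many values as the $\eta_i$ vary, so bounding each $T_k$ away from $0$ does not bound the linear combination.

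The actual Tate--Voloch argument is short but uses a genuinely different mechanism (a Frobenius-twist trick rather than induction on the number of terms); if you want to supply a proof rather than a citation, that is the reference to consult.
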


Unlike in the complex case, where linear forms in roots of unity can get as close to zero as we wish, we have a positive lower bound when working in $\mathbb C_p$. So we can allow all polynomials and do not have to require additional conditions as in Theorem \ref{thm:atoral}, where $P$ must be essentially atoral. The field where the coefficients of the polynomial are from, is as general as it can be.\\

As an application we prove a special case of Ih's Conjecture \cite{BIR} in the multiplicative setting.
Let $S$ be a finite set of rational primes and $x\in \overline{\mathbb Q}$. Let $K=\mathbb Q(x)$. We denote by $M(K)=M^0(K)\cup M^\infty(K)$ the set of finite and infinite places of $K$. Let $$S_K=M^\infty(K)\cup \{\nu\in M^0(K): \nu|p\text{ for some }p\in S\}.$$ We call $x$ an $S$-unit if $|x|_\nu= 1$ for all $\nu\in M(K)\setminus S_K$.

Let $K$ be a number field and ${P\in K[X_1^{\pm 1},\dots,X_n^{\pm 1}]\setminus \{0\}}$. Ih's Conjecture predicts that the set of torsion points $\boldsymbol\zeta\in \mathbb G_m^n$ such that $P(\boldsymbol\zeta)$ is an $S$-unit is not Zariski dense in $\mathbb G_m^n$, unless the zero set of $P$ is a finite union of proper torsion cosets. Using refined versions of Theorems \ref{thm:atoral} and  \ref{thm:pSimple} we can show

\begin{theorem}\label{thm:Sunits}Let $S$ be a finite set of primes, $K\subseteq\mathbb{C}$ a number field and ${P\in K[X_1^{\pm1},\dots,X_n^{\pm1}]\setminus\{0\}}$.  Suppose that $\tau(P)$ is essentially atoral for all field embeddings $\tau: K\to \mathbb{C}$. Then the zero set of $P$ in $\mathbb{G}_m^n$ is  a finite union of torsion cosets and the coefficients of $P$ are $S$-integers, or there exists $B\geq 1$ such that if $\boldsymbol{\zeta}\in\mathbb{G}_m^n$ has finite order and $P(\boldsymbol{\zeta})$ is an $S$-unit, then $\boldsymbol{\zeta}^{\mathbf{a}}=1$ for some $\mathbf{a}\in\mathbb{Z}^n\setminus\{{0}\}$ such that $|\mathbf{a}|\leq B$.\end{theorem}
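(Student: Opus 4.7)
The plan is to prove the contrapositive of (b). Suppose that for every $B\geq 1$ there is a torsion point $\boldsymbol{\zeta}\in\mathbb{G}_m^n$ with $P(\boldsymbol{\zeta})$ an $S$-unit and $\delta(\boldsymbol{\zeta})>B$; taking $B=k$ produces a sequence $(\boldsymbol{\zeta}_k)$ with $\delta(\boldsymbol{\zeta}_k)\to\infty$, and I would argue that both assertions of (a) must then hold. First, Theorem~\ref{thm:atoral} applied to each $\tau(P)$ (essentially atoral by hypothesis) gives $\tau(P)(\sigma(\boldsymbol{\zeta}_k))\neq 0$ for all $\sigma$ in the relevant Galois group once $\delta(\boldsymbol{\zeta}_k)$ is sufficiently large, so that $\alpha_k:=P(\boldsymbol{\zeta}_k)\in K(\boldsymbol{\zeta}_k)^{\times}$ is a nonzero $S$-unit for large $k$.

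The core step feeds $\alpha_k$ into the product formula on $L_k:=K(\boldsymbol{\zeta}_k)$. The $S$-unit hypothesis kills every contribution from places outside $S_{L_k}$, leaving only archimedean places and places above primes $p\in S$. Each remaining local term is a Galois average of $\log|(\tau P)(\iota(\boldsymbol{\zeta}_k))|_\nu$ with $\tau=\iota|_K$, to which I would apply a refined form of Theorem~\ref{thm:atoral} at archimedean places and Theorem~\ref{thm:padic} at $p$-adic places with $p\in S$. Grouping embeddings of $L_k$ by their restriction to $K$, the archimedean contribution tends to $\tfrac{1}{[K:\mathbb{Q}]}\sum_{\tau:K\hookrightarrow\mathbb{C}} m(\tau(P))$ and the contribution at each $p\in S$ to $\tfrac{1}{[K:\mathbb{Q}]}\sum_{\nu\mid p}[K_\nu:\mathbb{Q}_p]\log|P|_\nu$, each with error $O(\delta(\boldsymbol{\zeta}_k)^{-\kappa})$. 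Summing yields
\[
\Phi_S(P) := \sum_{\tau:K\hookrightarrow\mathbb{C}} m(\tau(P)) + \sum_{p\in S}\sum_{\nu\mid p}[K_\nu:\mathbb{Q}_p]\log|P|_\nu = O\bigl(\delta(\boldsymbol{\zeta}_k)^{-\kappa}\bigr).
\]

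Since $\Phi_S(P)$ is independent of $k$ and the right-hand side tends to $0$, we conclude $\Phi_S(P)=0$; conversely, whenever $\Phi_S(P)\neq 0$ this same estimate forces $\delta(\boldsymbol{\zeta})$ to be bounded in terms of $P$ and $S$, producing the constant $B$ in (b). The remaining task is to show that the single scalar identity $\Phi_S(P)=0$ forces (a). I would combine it with the global positivity $\sum_{\nu\in M(K)}[K_\nu:\mathbb{Q}_\nu]\log|P|_\nu\geq 0$ (obtained by applying the product formula to any fixed coefficient $c$ of $P$ together with $|P|_\nu\geq|c|_\nu$), and with Mahler-measure estimates relating $m(\tau(P))$ to $\log|\tau(P)|_\infty$, to force $\log|P|_\nu=0$ at every finite $\nu\notin S_K$ (so the coefficients of $P$ are $S$-integers) and $m(\tau(P))=0$ for every $\tau$. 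The vanishing of each $m(\tau(P))$, combined with the essential atorality of $\tau(P)$, then forces the zero set of $\tau(P)$, and hence of $P$, to be a union of torsion cosets via a Kronecker-style rigidity theorem for the Mahler measure. The hardest step is exactly this equality analysis: extracting both halves of (a) from the one scalar equation $\Phi_S(P)=0$ demands cleanly separating the archimedean and non-archimedean constraints and invoking a suitable rigidity theorem for essentially atoral polynomials of vanishing Mahler measure.
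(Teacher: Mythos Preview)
Your derivation of the scalar identity $\Phi_S(P)=0$ from the unboundedness assumption is correct and is exactly what the paper does (packaged there as Lemma~\ref{lem:eqPQp}, with $\Phi_S(P)=m(Q)+\sum_{p\in S}\log|Q|_p$ for $Q=\prod_\tau\tau(P)$). The gap is in your final paragraph: the single equation $\Phi_S(P)=0$ does \emph{not} force~(a). Take $K=\mathbb{Q}$, $S=\emptyset$, $P=\tfrac{1}{3}(X-3)$; then $P$ is essentially atoral, $m(P)=-\log 3+\log 3=0=\Phi_\emptyset(P)$, yet the zero set $\{3\}$ is not a torsion coset and the coefficients are not integers. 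Your proposed positivity $\sum_\nu d_\nu\log|P|_\nu\ge 0$ holds here with strict inequality, and there is no ``Mahler-measure estimate'' that closes the gap. Also note that your target ``$m(\tau(P))=0$ for every $\tau$'' is the wrong one: even when (a) holds, e.g.\ $P=2(X-1)$, one has $m(P)=\log 2\neq 0$.

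What the paper does instead is use the unboundedness hypothesis a \emph{second} time, at the finite places \emph{outside} $S$. For any $\nu\notin S_K$ and any torsion $\boldsymbol\zeta$ with $P(\boldsymbol\zeta)$ an $S$-unit, every Galois conjugate satisfies $|\sigma(P(\boldsymbol\zeta))|_\nu=1$, so the Galois average vanishes identically; the $p$-adic equidistribution (Theorem~\ref{thm:padic}) then forces $\log|P|_\nu=0$ directly (Lemma~\ref{lem:Sint}). This both yields $S$-integrality of the coefficients and upgrades the identity to $m(Q)+\sum_{p\in\mathbb{P}}\log|Q|_p=0$. Writing $Q=cQ'$ with $c\in\mathbb{Q}^\times$ the content and $Q'\in\mathbb{Z}[X_1,\dots,X_n]$ primitive, the product formula on $c$ collapses this to $m(Q')=0$, and Boyd's theorem (not essential atorality) then identifies $Q'$ as a product of extended cyclotomic polynomials, giving the torsion-coset conclusion for $Q$ and hence for $P$. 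The missing idea in your sketch is precisely this separate application of $p$-adic equidistribution at primes $p\notin S$.
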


Since the set of torsion points $\boldsymbol\zeta$ such that $\delta(\boldsymbol\zeta)\le B$ is contained in a finite union of proper algebraic subgroups, it is not dense in $\mathbb G_m^n$ and hence Theorem \ref{thm:Sunits} proves Ih's Conjecture for polynomials all of whose conjugates are essentially atoral.

\paragraph{Overview of the proofs} In the univariate case we factor ${F=a_0(X-\alpha_1)\cdots(X-\alpha_n)}$. It turns out that only the $\alpha_k$ such that $|\alpha_k|_p=1$ contribute to the error. To understand $|\sigma(\zeta)-\alpha|_p$ for $\sigma \in \mathrm{Gal}(\mathbb Q(\zeta)/\mathbb Q)$ we pick $\sigma_0$ such that $\sigma_0(\zeta)$ is closest to $\alpha$. Then we write $$\sigma(\zeta)-\alpha= (\sigma(\zeta)-\sigma_0(\zeta))+(\sigma_0(\zeta)-\alpha).$$
If the order of $\sigma(\zeta)/\sigma_0(\zeta)$ has a prime divisor different from $p$ we have $|\sigma(\zeta)-\sigma_0(\zeta)|_p=1$. We use the ultrametric inequality to deduce $|\sigma(\zeta)-\alpha|_p=1$. In the other case, where the order of $\sigma(\zeta)/\sigma_0(\zeta)$ is a power of $p$, we can bound $|\sigma(\zeta)-\alpha|_p\ge |\sigma(\zeta)-\sigma_0(\zeta)|_p=|\sigma(\zeta)/\sigma_0(\zeta)-1|_p$ by the ultrametric inequality and since the absolute value of roots of unity equals one. Thus we have to understand $|\zeta-1|_p$ where the order of $\zeta$ is a power of $p$. To get our bound we use that $\prod_{i=1}^{n-1} (1-\zeta^i) = n$ for all roots of unity $\zeta$ of order $n$.

The proof in the multivariate case follows the lines of section 5 and 6 as well as the proof of Theorem 8.8 in \cite{Atoral}. We factor $\boldsymbol{\zeta=\xi\eta}$ and use Galois theory to reduce to the univariate case. Roughly speaking, if $\boldsymbol\xi=(\omega^{u_1},\dots,\omega^{u_n})$ for some root of unity $\omega$, the univariate polynomial is $Q_a(X)=F(\boldsymbol\eta^a X^{a\mathbf u^\top})$. One difference is that the complex $N$-th roots of unity are given by $e^{2\pi i k/N}, k=1,\dots, N$, but no similar representation does exist in $\mathbb C_p$. But the $N$-th roots of unity in $\mathbb C$ are isomorphic to those in $\mathbb C_p$ as groups, thus it is mainly a difference in notation. Instead of the Mahler measure, we have to deal with $\log|F|_p$. Although $m(\cdot)$ and $\log|\cdot |_p$ have some analogue properties, we need a new proof to relate the logarithm of the Gauss norms of the transformations with $\log|F|_p.$ Given a family $F_1,\dots, F_m\in\mathbb C_p[X_1,\dots, X_n]$ and $\mathbf u_1,\dots, \mathbf u_m\in\mathbb Z^r$ we show that there is a torsion point $\boldsymbol\zeta\in\mathbb G_m^r$ such that $F=\sum_{i=1}^m \boldsymbol\zeta^{\mathbf u_i}F_i$ satisfies $|F|_p=\max\{|F_i|_p: i=1,\dots,m\}$. To prove this we use the fact that for any $F\in \mathbb C_p[X]$ there are at most $\deg(F)$ roots of unity $\zeta$ of order prime to $p$ such that $|F(\zeta)|_p\ne|F|_p$.
\\

The idea of the application is that thanks to the product formula and the $S$-integrality of $P(\boldsymbol\zeta)$ we can find an equation of averages. Then we use the equidistribution results and conclude that either the limits must be the same or $\delta(\boldsymbol\zeta)$ is bounded. If the limits are the same we find that $m(Q)+\sum_{p\in\mathbb P}\log |Q|_p =0,$ where $Q=\prod_{\tau: K\to \mathbb C} \tau(P)$. Using additivity of the Mahler measure and the product formula we find that $m(Q')=0$ where $Q'\in\mathbb Z[X_1,\dots, X_n]$ is the primitive part of $Q$. We can use the characterisation of such polynomials which was proven by Boyd in \cite{Boyd} and see that $Q'$ is a product of extended cyclotomic polynomials. These are of the form $X_1^{a_1}\cdots X_n^{a_n}\phi_m(X_1^{b_1}\cdots X_n^{b_n})$ where $b_1,\dots, b_n$ are coprime integers and $\phi_m\in\mathbb Z[X]$ is the $m$-th cyclotomic polynomial. In particular the zero set of $Q'$ is a finite union of torsion cosets. 

\paragraph{Organisation of the article} After setting up the notations, we first prove Theorem \ref{thm:atoral} for univariate polynomials. Then we generalize it to the multidimensional setting in section \ref{sec:multVariate}.

For the application we first introduce the $p$-adic absolute values in number fields, and the $S$-units. Then we give equidistribution results in the archimedian as well as in the non-archimedian setting. Finally we study extended cyclotomic polynomials and prove Theorem \ref{thm:Sunits}.

\paragraph{Acknowledgements} I want to thank my advisor Philipp Habegger for helpful conversations and simplifications. This is part of my PhD thesis. I have
received funding from the Swiss National Science Foundation grant number 200020\_184623.

\section{Notation}\label{sec:notation} Let us first define some symbols. As usual $\mathbb{Z},\mathbb{Q}$ and $\mathbb{C}$ denote the rational integers, the rational numbers and the complex numbers. The natural numbers are $\mathbb{N}=\{1, 2, 3,\dots\}$ and we write $\mathbb{N}_0$ for $\mathbb{N}\cup\{0\}$. We denote by $\mathbb{P}=\{2,3,5,\dots\}$ the set of rational primes. Let $K$ be an algebraically closed field. For all $n\in\mathbb{N}$ we denote by $\mu_n=\{x\in K:x^n=1\}$ the $n$-th roots of unity in $K$. We denote Euler's totient function by $\varphi$. For $p\in \mathbb{P}$ we denote by $\mathbb{Q}_p$ the $p$-adic numbers, by $\overline{\mathbb{Q}_p}$ their algebraic closure and by $\mathbb{C}_p$ a completion of $\overline{\mathbb{Q}_p}$ which is both complete and algebraically closed. The $p$-adic absolute value on $\mathbb{Q}_p$ extends uniquely to $\mathbb{C}_p$ and this extension is also denoted by $|\cdot |_p.$ Finally we obtain a norm on $\mathbb{C}_p^n$ by setting $|\mathbf{x}|_p=\max\{|x_i|_p:i=1,\dots, n\}$ where $\mathbf{x}=(x_1,\dots, x_n)$. We write $\omega_n$ for a primitive $n$-th root of unity in $\mathbb{C}_p$. For a polynomial $F\in\mathbb{C}_p[X_1,\dots,X_n]$ and a multiindex $\boldsymbol{\alpha}=(\alpha_1,\dots,\alpha_n)\in \mathbb{N}_0^n$ we denote by $f_{\boldsymbol{\alpha}}$ the coefficient of $F$ at the monomial $X_1^{\alpha_1}\cdots X_n^{\alpha_n}$. For $n\in\mathbb{N}$ and a field $K$ we write $\mathbb{G}_m^n=\mathbb G_m^n(K)=(\overline{K}\setminus\{0\})^n$ which is a group with componentwise multiplication. We denote by $\mu_\infty^n$  its points of finite order. Let $R$ be a ring. For a vector of invertible elements $\mathbf{x}=(x_1,\dots, x_n)\in (R^*)^n$ and $A=(a_{i,j})\in \mathrm{Mat}_{n\times m}(\mathbb Z)$ we define $$\mathbf{x}^A=(x_1^{a_{1,1}}\cdots x_n^{a_{n,1}},\dots, x_1^{a_{1,m}}\cdots x_n^{a_{n,m}})\in (R^*)^m.$$
This notation is compatible with the multiplication of matrices in the following sense.
For all $n,r,s\in\mathbb{N}$, $\mathbf{x}\in (R^*)^n$, $A\in \mathrm{Mat}_{n\times r}(\mathbb Z)$ and $B\in\mathrm{Mat}_{r \times s}(\mathbb Z)$ we have $(\mathbf{x}^A)^B = \mathbf{x}^{AB}$. An element in $\mathbb Z^n$ is considered as a column vector. By $\left<\cdot\,,\cdot\right>$ we denote the scalar product in $\mathbb R^n$. If $S$ is a finite set, $\#S$ denotes the number of elements in it. We also use the Vinogradov notation $\ll_d$ and $\gg_d$, and the Landau notation $o$ and $O$.

\section{Univariate case}
Let $p$ be a prime. We give an estimate for $\left|\frac{1}{\#M}\sum_{\sigma\in M} \log|F(\sigma(\zeta))|_p-|F|_p\right|$ for a univariate polynomial $F\in\mathbb C_p[X]$; here $\zeta\in\mathbb G_m(\mathbb C_p)$ is a torsion point and $M$ is any subset of $\mathrm{Gal}(\mathbb Q(\zeta)/\mathbb Q)$. We extensively use the fact, that for $x,y\in\mathbb C_p$ the absolute value of $x+y$ is given by the maximum of their absolute values whenever $|x|_p\ne |y|_p$.

\begin{definition}Let $n\in\mathbb{N}$ and $F\in\mathbb{C}_p[X_1,\dots, X_n]$. Then we define $\mathrm{Supp}(F)=\{\boldsymbol{\alpha}\in\mathbb{N}_0^n: f_{\boldsymbol{\alpha}} \neq 0\}$ the support of $F$, the set of multiindices where the corresponding coefficient does not vanish. Further we denote by $|F|_p=\max \{|f_{\boldsymbol{\alpha}}|_p: \boldsymbol{\alpha}\in \mathbb{N}_0^n\}$ the $p$-adic maximum norm of $F$.\end{definition}

\begin{lemma}\label{lem:inf} Let $F\in \mathbb{C}_p[X_1,\dots, X_n]\setminus\{0\}$. Then $$\inf(F)=\inf\{|F(\boldsymbol\zeta)|_p:F(\boldsymbol\zeta)\neq 0, \boldsymbol\zeta\in \mu_\infty^n\} \text{ and }c(F)=-\log\left(|F|_p^{-1}\inf(F)\right)$$ are well defined and we have $c(F)\ge 0$. \end{lemma}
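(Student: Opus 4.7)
The plan is to verify three small things in order: that the set defining $\inf(F)$ is non-empty, that this infimum is strictly positive (so $-\log$ makes sense), and finally that $\inf(F)\le |F|_p$, which is exactly $c(F)\ge 0$.

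First, non-emptiness. Since $F\ne 0$ and the torsion points $\mu_\infty^n$ are Zariski dense in $\mathbb{G}_m^n$ (a standard fact: over any algebraically closed field, a polynomial vanishing on all of $\mu_\infty^n$ must be zero, for instance by fixing all but one variable and using density of $\mu_\infty$ in $\mathbb{G}_m$), there exists some $\boldsymbol\zeta\in\mu_\infty^n$ with $F(\boldsymbol\zeta)\ne 0$. Hence $\{|F(\boldsymbol\zeta)|_p:F(\boldsymbol\zeta)\ne 0,\boldsymbol\zeta\in\mu_\infty^n\}$ is non-empty.

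Second, positivity of the infimum. Write $F=\sum_{\boldsymbol\alpha\in\mathrm{Supp}(F)} f_{\boldsymbol\alpha} X^{\boldsymbol\alpha}$, which is a finite sum. For any $\boldsymbol\zeta\in\mu_\infty^n$, each $\boldsymbol\zeta^{\boldsymbol\alpha}$ is a root of unity in $\mathbb{C}_p$, so $F(\boldsymbol\zeta)$ is a linear combination of roots of unity with fixed coefficients $f_{\boldsymbol\alpha}\in\mathbb{C}_p$. Applying Theorem \ref{thm:TV} to the family $\{f_{\boldsymbol\alpha}:\boldsymbol\alpha\in\mathrm{Supp}(F)\}$ yields a constant $c>0$ such that $|F(\boldsymbol\zeta)|_p\ge c$ whenever $F(\boldsymbol\zeta)\ne 0$. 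Therefore $\inf(F)\ge c>0$, and in particular $\log(|F|_p^{-1}\inf(F))$ is a well-defined real number (using also $|F|_p>0$ since $F\ne 0$).

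Third, the inequality $c(F)\ge 0$. For any $\boldsymbol\zeta\in\mu_\infty^n$, each coordinate $\zeta_i$ and hence each monomial value $\boldsymbol\zeta^{\boldsymbol\alpha}$ has $p$-adic absolute value equal to $1$. By the ultrametric triangle inequality,
\[
|F(\boldsymbol\zeta)|_p=\Bigl|\sum_{\boldsymbol\alpha\in\mathrm{Supp}(F)} f_{\boldsymbol\alpha}\boldsymbol\zeta^{\boldsymbol\alpha}\Bigr|_p\le \max_{\boldsymbol\alpha\in\mathrm{Supp}(F)}|f_{\boldsymbol\alpha}|_p=|F|_p.
\]
Taking the infimum over those $\boldsymbol\zeta$ with $F(\boldsymbol\zeta)\ne 0$ gives $\inf(F)\le|F|_p$, equivalently $|F|_p^{-1}\inf(F)\le 1$, whence $c(F)=-\log(|F|_p^{-1}\inf(F))\ge 0$.

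There is no real obstacle here; the only non-trivial ingredient is Theorem \ref{thm:TV}, which is invoked precisely to rule out the scenario that $F(\boldsymbol\zeta)$ can be made arbitrarily small without vanishing. Everything else is the ultrametric inequality together with density of torsion points.
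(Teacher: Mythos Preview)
Your proof is correct and follows essentially the same approach as the paper: non-emptiness via Zariski density of torsion points, positivity of $\inf(F)$ via Theorem~\ref{thm:TV}, and $c(F)\ge 0$ via the ultrametric inequality. The paper's proof is simply a more condensed version of what you wrote.
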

\begin{proof}The only polynomial which vanishes on every torsion point of $\mathbb{G}_m^n(\mathbb{C}_p)$ is the zero polynomial which we excluded, so the infimum is well defined. It is positive by Theorem \ref{thm:TV}, so we can take the logarithm of $|F|_p^{-1}\inf(F)$. Note that for every torsion point $\boldsymbol{\zeta}$ we have $\inf(F)\le |F(\boldsymbol\zeta)|_p\leq |F|_p$ by the ultrametric inequality. So $c(F)\geq 0$.\end{proof}

\begin{lemma}\label{lem:absDalphaF}Let $F\in \mathbb{C}_p[X]$ be a polynomial. Let $i\in\mathbb{N}_0$ and $x\in \mathbb{C}_p$ with $|x|_p\le 1$. Then $$\left|\frac{1}{i!}\frac{d^i F}{dX^i}(x)\right|_p \leq |F|_p.$$\end{lemma}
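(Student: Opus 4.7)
My plan is to expand $F$ in the monomial basis and compute the normalized derivative explicitly. Writing $F=\sum_{j=0}^{d}a_jX^j$ with $a_j\in\mathbb{C}_p$, a direct computation gives
$$\frac{1}{i!}\frac{d^iF}{dX^i}(X)=\sum_{j=i}^{d}\binom{j}{i}a_jX^{j-i}.$$
The key point is that the binomial coefficients $\binom{j}{i}$ are rational integers, so the factorials in the denominator cancel before we are forced to invoke any $p$-adic information about $i!$.

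From here the bound is immediate from the ultrametric inequality. For every $j\ge i$, the term $\binom{j}{i}a_jx^{j-i}$ has $p$-adic absolute value at most $|\binom{j}{i}|_p\,|a_j|_p\,|x|_p^{j-i}\le 1\cdot|F|_p\cdot 1=|F|_p$, using that $\binom{j}{i}\in\mathbb{Z}$ forces $|\binom{j}{i}|_p\le 1$, that $|a_j|_p\le|F|_p$ by definition of the Gauss norm, and the hypothesis $|x|_p\le 1$. Summing over $j$ and applying the non-archimedean triangle inequality yields
$$\left|\frac{1}{i!}\frac{d^iF}{dX^i}(x)\right|_p\le\max_{i\le j\le d}\left|\binom{j}{i}a_jx^{j-i}\right|_p\le|F|_p,$$
which is what we want.

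There is no real obstacle here; the only conceptual subtlety is noticing that one must not distribute $1/i!$ into the sum term-by-term (which would yield $\frac{j(j-1)\cdots(j-i+1)}{i!}$ with a potentially $p$-divisible denominator), but instead keep the product $\binom{j}{i}$ as a single integer. The lemma is purely a statement about the Gauss norm being compatible with the Hasse (divided) derivative, and the proof is a one-line application of the ultrametric inequality once the formula is written down.
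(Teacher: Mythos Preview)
Your proof is correct and takes essentially the same approach as the paper: both compute the divided derivative explicitly, observe that the coefficients are binomial coefficients and hence integers with $|\cdot|_p\le 1$, and conclude via the ultrametric inequality. The paper merely reduces first to the monomial case $F=X^n$ by linearity, whereas you keep the full sum, but the argument is identical.
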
\begin{proof}By linearity of the derivative and the ultrametric inequality we can assume that $F=X^n$. Let $i\in\mathbb{N}_0$. An inductive argument shows that we have $$\frac{d^i X^n}{d X^i} =\begin{cases}\frac{n!}{(n-i)!}X^{n-i}& \text{if } i \leq n,\\  0 &\text{else}.\end{cases}$$ Since binomial coefficients are integers we find $\left|\frac{1}{i!}\frac{d^iF}{dX^i}(x)\right|_p=\left|\binom{n}{i}x^{n-i}\right|_p\le 1=|F|_p$ in the first case. The inequality holds trivially in the second case.\end{proof}

\begin{lemma}\label{lem:absAlphaZeta}Let $F\in\mathbb{C}_p[X]\setminus\{0\}$. Suppose that $\alpha$ is a root of $F$ such that $|\alpha|_p=1$ and let $\zeta$ be a root of unity such that $F(\zeta)\neq 0$. Then we have $$|\zeta-\alpha|_p\geq \exp(-c(F)).$$\end{lemma}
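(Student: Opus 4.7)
The plan is to exploit the Taylor expansion of $F$ around the root $\alpha$, combined with Lemma \ref{lem:absDalphaF} to control the Hasse derivatives, and the definition of $c(F)$ via $\inf(F)$.

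First I would write
$$F(\zeta) = \sum_{i\ge 0} \frac{1}{i!}\frac{d^i F}{dX^i}(\alpha)\,(\zeta-\alpha)^i.$$
Since $\alpha$ is a root of $F$, the $i=0$ term vanishes, so the sum runs over $i\ge 1$. Because $|\alpha|_p = 1 \le 1$, Lemma \ref{lem:absDalphaF} applies at $x=\alpha$ and gives $\left|\tfrac{1}{i!}\tfrac{d^i F}{dX^i}(\alpha)\right|_p \le |F|_p$ for every $i$. By the ultrametric inequality this yields
$$|F(\zeta)|_p \;\le\; \max_{i\ge 1}\, |F|_p\,|\zeta-\alpha|_p^{\,i}.$$

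Next I would observe that $|\zeta-\alpha|_p \le \max(|\zeta|_p,|\alpha|_p) = 1$ (both $\zeta$ and $\alpha$ lie on the unit circle of $\mathbb{C}_p$), so the maximum over $i\ge 1$ of $|\zeta-\alpha|_p^{\,i}$ is attained at $i=1$. Hence
$$|F(\zeta)|_p \;\le\; |F|_p\,|\zeta-\alpha|_p.$$
Since $F(\zeta)\ne 0$ and $\zeta\in\mu_\infty$, the definition of $\inf(F)$ in Lemma \ref{lem:inf} forces $|F(\zeta)|_p \ge \inf(F)$. Combining the two inequalities,
$$|\zeta-\alpha|_p \;\ge\; \frac{\inf(F)}{|F|_p} \;=\; \exp(-c(F)),$$
which is precisely the claim.

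There is no real obstacle: the argument is a standard ultrametric Taylor bound. The only subtle point is to note that the $i=0$ term drops out because $\alpha$ is a root, and that the geometric factors $|\zeta-\alpha|_p^{\,i}$ are maximized at $i=1$ because the base is at most $1$; both are immediate from the hypotheses $F(\alpha)=0$ and $|\alpha|_p=|\zeta|_p=1$.
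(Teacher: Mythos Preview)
Your proof is correct and follows essentially the same approach as the paper: Taylor expansion plus Lemma \ref{lem:absDalphaF} plus the definition of $c(F)$. The only cosmetic difference is that the paper expands $F$ around $\zeta$ (writing $0=F(\alpha)=\sum_i \tfrac{(\alpha-\zeta)^i}{i!}F^{(i)}(\zeta)$ and then dividing by $\alpha-\zeta$), whereas you expand around $\alpha$; both centers satisfy $|\cdot|_p\le 1$, so Lemma \ref{lem:absDalphaF} applies either way, and both arguments use $|\zeta-\alpha|_p\le 1$ to reduce to the linear term.
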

\begin{proof}By the Taylor expansion we have $0=F(\alpha)=\sum_{i=0}^{\deg(F)}\frac{(\alpha-\zeta)^i}{i!} \frac{d^i F}{d X^i} (\zeta).$ Putting $F(\zeta)$ on the other side and dividing through $\alpha-\zeta$ yields $$\frac{-F(\zeta)}{\alpha-\zeta}=\sum_{i=1}^{\deg(F)}\frac{(\alpha-\zeta)^{i-1}}{i!} \frac{d^i F}{d X^i}(\zeta).$$ Taking absolute values and applying Lemma \ref{lem:absDalphaF} gives $$\left|\frac{F(\zeta)}{\alpha-\zeta}\right|_p\leq \max_{i=1,\dots,\deg(F)}|(\alpha-\zeta)|_p^{i-1}\left|\frac{1}{i!} \frac{d^i F}{d X^i} (\zeta)\right|_p\leq |F|_p.$$ Thus recalling the definition of $c(F)$ we find $\exp(-c(F))\leq |F|_p^{-1}|F(\zeta)|_p\leq |\zeta-\alpha|_p$.\end{proof}

\begin{lemma}\label{lem:errAbs1}Let $F=a_0\prod_{i=1}^{\deg(F)} (X-\alpha_i)\in \mathbb{C}_p[X]\setminus\{0\}$ be a polynomial and $\zeta\in\mathbb{C}_p$ a root of unity. Let $M$ be a non-empty subset of $\mathrm{Gal}(\mathbb{Q}(\zeta)/\mathbb{Q})$ and assume that $F(\sigma(\zeta))\neq 0$ for all $\sigma\in M$. Then we have
$$\frac{1}{\#M}\sum_{\sigma\in M} \log|F(\sigma(\zeta))|_p = \log|F|_p + \frac{1}{\#M}\sum_{\substack{i=1 \\ |\alpha_i|_p=1}}^{\deg(F)}\sum_{\sigma\in M} \log|\sigma(\zeta)-\alpha_i|_p.$$\end{lemma}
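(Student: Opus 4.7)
The plan is to factor $F(\sigma(\zeta))$ via the given factorization, take logarithms, and then split the roots $\alpha_i$ according to their $p$-adic size, using the ultrametric property to collapse all but the troublesome case $|\alpha_i|_p = 1$.

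First I would write, for each $\sigma \in M$,
\[
\log|F(\sigma(\zeta))|_p = \log|a_0|_p + \sum_{i=1}^{\deg(F)} \log|\sigma(\zeta)-\alpha_i|_p,
\]
using multiplicativity of $|\cdot|_p$ and the hypothesis $F(\sigma(\zeta))\neq 0$ to justify taking logs. Averaging over $\sigma \in M$ gives
\[
\frac{1}{\#M}\sum_{\sigma\in M}\log|F(\sigma(\zeta))|_p = \log|a_0|_p + \frac{1}{\#M}\sum_{i=1}^{\deg(F)}\sum_{\sigma\in M}\log|\sigma(\zeta)-\alpha_i|_p.
\]

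Next I would split the index set $\{1,\dots,\deg(F)\}$ according to whether $|\alpha_i|_p$ is $>1$, $=1$, or $<1$. Since $\sigma(\zeta)$ is a root of unity in $\mathbb{C}_p$ we have $|\sigma(\zeta)|_p = 1$. Whenever $|\alpha_i|_p \neq 1$, the two summands in $\sigma(\zeta)-\alpha_i$ have distinct absolute values, so by the ultrametric equality $|\sigma(\zeta)-\alpha_i|_p = \max(1,|\alpha_i|_p)$, independent of $\sigma$. Thus the contribution of the $i$ with $|\alpha_i|_p < 1$ vanishes, and the contribution of the $i$ with $|\alpha_i|_p > 1$ simplifies to $\sum_{|\alpha_i|_p>1} \log|\alpha_i|_p$, again independent of $\sigma$ and of $M$.

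Finally, to identify the constant term with $\log|F|_p$, I would invoke multiplicativity of the $p$-adic Gauss norm (Gauss' Lemma in its ultrametric form): $|GH|_p = |G|_p|H|_p$ for $G,H \in \mathbb{C}_p[X]$. Applied to the factorization of $F$, and using $|X-\alpha_i|_p = \max(1,|\alpha_i|_p)$, this yields
\[
|F|_p = |a_0|_p \prod_{i=1}^{\deg(F)} \max(1,|\alpha_i|_p),
\]
hence $\log|a_0|_p + \sum_{|\alpha_i|_p>1}\log|\alpha_i|_p = \log|F|_p$. Substituting back gives exactly the stated identity, with only the terms $|\alpha_i|_p = 1$ surviving in the error sum. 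There is no real obstacle here — the proof is essentially bookkeeping once one spots that the ultrametric equality disposes of every root of $F$ that is not on the unit circle of $\mathbb{C}_p$, while Gauss' Lemma handles the remaining constant.
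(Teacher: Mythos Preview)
Your proof is correct and follows essentially the same approach as the paper: factor, split the roots by $p$-adic size, use the ultrametric equality to eliminate the cases $|\alpha_i|_p\neq 1$, and invoke the Gauss lemma to identify the constant term with $\log|F|_p$. The paper's presentation differs only cosmetically in how the sums are arranged.
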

\begin{proof}We can rearrange the sum as follows \begin{align*}\frac{1}{\#M}\sum_{\sigma\in M} \log|F(\sigma(\zeta))|_p 
&= \frac{1}{\#M}\sum_{\sigma\in M}\left(\log |a_0|_p+ \sum_{i=1}^{\deg(F)}\log|\sigma(\zeta)-\alpha_i|_p \right)\\
&=\left(\log |a_0|_p+\frac{1}{\#M}\sum_{\sigma\in M}\sum_{\substack{i=1 \\ |\alpha_i|_p>1}}^{\deg(F)}\log|\sigma(\zeta)-\alpha_i|_p\right)\\ & \quad + \frac{1}{\#M}\sum_{\substack{i=1 \\ |\alpha_i|_p=1}}^{\deg(F)}\sum_{\sigma\in M} \log|\sigma(\zeta)-\alpha_i|_p+  \frac{1}{\#M}\sum_{\sigma\in M}\sum_{\substack{i=1 \\ |\alpha_i|_p<1}}^{\deg(F)}\log|\sigma(\zeta)-\alpha_i|_p.\end{align*}
We have $|\sigma(\zeta)|_p=1$ for all $\sigma\in M$. Therefore by a consequence of the ultrametric inequality we have $$\frac{1}{\#M}\sum_{\sigma\in M}\sum_{\substack{i=1 \\ |\alpha_i|_p>1}}^{\deg(F)}\log|\sigma(\zeta)-\alpha_i|_p= \sum_{\substack{i=1 \\ |\alpha_i|_p>1}}^{\deg(F)}\log|\alpha_i|_p \text{ and }\frac{1}{\#M}\sum_{\sigma\in M}\sum_{\substack{i=1 \\ |\alpha_i|_p<1}}^{\deg(F)}\log|\sigma(\zeta)-\alpha_i|_p=0.$$
By the Gauss lemma we have $$|F|_p= |a_0|_p\prod_{i=1}^{\deg(F)}|X-\alpha_i|_p= |a_0|_p\prod_{i=1}^{\deg(F)}\max\{1,|\alpha_i|_p\} = |a_0|_p\prod_{\substack{i=1 \\ |\alpha_i|_p>1}}^{\deg(F)}|\alpha_i|_p.$$ Combining the equations yields $$\frac{1}{\#M}\sum_{\sigma\in M} \log|F(\sigma(\zeta))|_p = \log|F|_p + \frac{1}{\#M}\sum_{\substack{i=1 \\ |\alpha_i|_p=1}}^{\deg(F)}\sum_{\sigma\in M} \log|\sigma(\zeta)-\alpha_i|_p.\qedhere$$\end{proof}

\begin{lemma}\label{lem:philIn1}Let $n\in \mathbb{N}$ and let $\zeta\in\mathbb{C}_p$ be a root of unity of order $n$. Then $$\prod_{i=1}^{n-1} (1-\zeta^i)=n.$$\end{lemma}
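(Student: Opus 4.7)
The plan is to use the factorization of $X^n - 1$ over $\mathbb{C}_p$. Since $\zeta$ has order $n$, the powers $\zeta^0, \zeta^1, \dots, \zeta^{n-1}$ are exactly the $n$-th roots of unity in $\mathbb{C}_p$, and so
\[
X^n - 1 = \prod_{i=0}^{n-1}(X - \zeta^i).
\]

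Next I would divide out the factor $X - 1$ corresponding to $i=0$. On the left-hand side this yields the geometric sum $1 + X + X^2 + \cdots + X^{n-1}$, while on the right it gives $\prod_{i=1}^{n-1}(X - \zeta^i)$. Both sides are polynomials (the division is exact), so the identity
\[
1 + X + X^2 + \cdots + X^{n-1} = \prod_{i=1}^{n-1}(X - \zeta^i)
\]
holds in $\mathbb{C}_p[X]$.

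Finally, I would evaluate this polynomial identity at $X = 1$. The left-hand side becomes $n$, and the right-hand side becomes $\prod_{i=1}^{n-1}(1 - \zeta^i)$, giving the claim.

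There is essentially no obstacle: the only point to be careful about is that one must verify that $\zeta^1,\dots,\zeta^{n-1}$ are pairwise distinct and none equals $1$, which follows from $\zeta$ having order exactly $n$ in the group $\mu_\infty \subset \mathbb{C}_p^\times$. The argument is purely formal and works over any field containing a primitive $n$-th root of unity, so the $p$-adic setting plays no special role.
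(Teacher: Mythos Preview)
Your proof is correct and is essentially the same as the paper's: the paper simply writes the identity $(X-\zeta)\cdots(X-\zeta^{n-1}) = X^{n-1}+\dots+1$ and evaluates at $X=1$, which is exactly your argument stated more tersely.
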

\begin{proof}Evaluate $(X-\zeta)\cdots (X-\zeta^{n-1}) = X^{n-1}+\dots+1$ in $X=1$.\end{proof}

\begin{lemma}\label{lem:notpPower}Let $l\neq p$ be a prime and $\zeta\in\mathbb{C}_p$ a root of unity whose order is divisible by $l$.
Then $|\zeta-1|_p =1$.\end{lemma}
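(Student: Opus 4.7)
The plan is to reduce to the case that $\zeta$ has order exactly $l$ by extracting a primitive $l$-th root of unity from $\zeta$, and then to apply Lemma \ref{lem:philIn1} together with the ultrametric inequality.

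First I would handle the case where $\zeta$ has order exactly $l$. By Lemma \ref{lem:philIn1} applied with $n=l$ we have
\[
\prod_{i=1}^{l-1}(1-\zeta^i) = l.
\]
Since $l\ne p$ we have $|l|_p=1$. On the other hand, each factor satisfies $|1-\zeta^i|_p\le \max\{|1|_p,|\zeta^i|_p\}=1$ by the ultrametric inequality (roots of unity have $p$-adic absolute value $1$). If any factor had absolute value strictly less than $1$, the product would have $p$-adic absolute value strictly less than $1$, contradicting $|l|_p=1$. Hence $|1-\zeta^i|_p=1$ for all $i=1,\dots,l-1$, in particular $|1-\zeta|_p=1$.

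For the general case, write the order of $\zeta$ as $n=lk$. Then $\eta:=\zeta^k$ is a primitive $l$-th root of unity, so by the previous step $|1-\zeta^k|_p=|1-\eta|_p=1$. Using the telescoping factorisation
\[
1-\zeta^k = (1-\zeta)\bigl(1+\zeta+\zeta^2+\cdots+\zeta^{k-1}\bigr)
\]
and taking $p$-adic absolute values, both factors on the right are at most $1$ (again by the ultrametric inequality and $|\zeta|_p=1$), while their product has absolute value $1$. Therefore each factor has absolute value exactly $1$, and in particular $|1-\zeta|_p=1$.

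There is no real obstacle here: the main ingredient, the product identity $\prod_{i=1}^{n-1}(1-\zeta^i)=n$, is already supplied by Lemma \ref{lem:philIn1}, and the rest is the ultrametric inequality together with the fact that $|l|_p=1$ for primes $l\ne p$.
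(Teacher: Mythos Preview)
Your proof is correct. Both your argument and the paper's rely on Lemma~\ref{lem:philIn1} together with the ultrametric inequality, but the decomposition is different. The paper works directly with the full order $n=lm$: it applies Lemma~\ref{lem:philIn1} once for $\zeta$ of order $n$ and once for $\zeta^l$ of order $m$, divides the two products to obtain
\[
\prod_{\substack{i=1\\ l\nmid i}}^{n-1}|1-\zeta^i|_p=|n|_p|m|_p^{-1}=|l|_p=1,
\]
and concludes that each factor, in particular $|1-\zeta|_p$, equals $1$. You instead first treat the case of exact order $l$ via a single application of Lemma~\ref{lem:philIn1}, and then reduce the general case to it by the telescoping factorisation $1-\zeta^k=(1-\zeta)(1+\zeta+\cdots+\zeta^{k-1})$. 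Your route is arguably a touch more elementary; the paper's route yields the slightly stronger byproduct that $|1-\zeta^k|_p=1$ for every $k$ not divisible by $l$, though this extra information is not needed for the lemma.
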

\begin{proof}Let $n=lm$ be the order of $\zeta$. Since we have $|1-\zeta^k|_p\le 1$ for every $k\in\mathbb N$ any product of such factors is of absolute value at most one. Note that $\zeta^l$ is of order $m.$ Thus by Lemma \ref{lem:philIn1} we have $$\prod_{\substack{i=1\\ l\nmid i}}^{n-1} |1-\zeta^i|_p
=\left(\prod_{i=1}^{n-1}|1-\zeta^i|_p\right)\left(\prod_{i=1}^{m-1} \left|1-\zeta^{li}\right|_p\right)^{-1}=|n|_p|m|_p^{-1}=|l|_p=1.$$ Therefore all the inequalities must be equalities: we have $|1-\zeta^k|_p=1$ for all $1\le k < n$ which are not multiples of $l$. In particular we find $|1-\zeta|_p=1$.\end{proof}

\begin{lemma}\label{lem:alphaabs1}Let $F \in \mathbb{C}_p[X]\setminus\{0\}$ be a polynomial and $\alpha\in\mathbb{C}_p$ a root of it. Let $\zeta\in\mathbb{C}_p$ be a root of unity of order $p^k m,\; p\nmid m,\; k\in\mathbb N_0$ and $M$ a non empty subset of $\mathrm{Gal}(\mathbb{Q}(\zeta)/\mathbb{Q})$. Suppose that $|\alpha|_p =1$ and $F(\sigma(\zeta))\neq 0$ for all $\sigma \in M$. Then we have
$$0 \geq \sum_{\sigma\in M}\log|\sigma(\zeta)-\alpha|_p
 \geq -\left({c}(F)+k \log(p)\right).$$\end{lemma}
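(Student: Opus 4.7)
The upper bound $\sum_{\sigma\in M}\log|\sigma(\zeta)-\alpha|_p\le 0$ is immediate from the ultrametric inequality together with $|\sigma(\zeta)|_p=|\alpha|_p=1$. For the lower bound I would follow the univariate strategy outlined in the introduction. Pick $\sigma_0\in M$ minimising $\sigma\mapsto|\sigma(\zeta)-\alpha|_p$; this exists because $M$ is finite and non-empty. For $\sigma\in M$ set $\eta_\sigma=\sigma(\zeta)/\sigma_0(\zeta)$, a root of unity whose order divides $p^k m$. Writing $\sigma(\zeta)-\alpha=(\sigma(\zeta)-\sigma_0(\zeta))+(\sigma_0(\zeta)-\alpha)$, the minimality of $\sigma_0$ together with the ultrametric inequality forces
$$|\sigma(\zeta)-\alpha|_p\;\ge\;|\sigma(\zeta)-\sigma_0(\zeta)|_p\;=\;|\eta_\sigma-1|_p.$$

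Next I would split $M$ according to the order of $\eta_\sigma$. If some prime $\ell\neq p$ divides this order, then Lemma~\ref{lem:notpPower} yields $|\eta_\sigma-1|_p=1$, hence $|\sigma(\zeta)-\sigma_0(\zeta)|_p=1$, and combining with $|\sigma_0(\zeta)-\alpha|_p\le 1$ and the minimality of $\sigma_0$ one deduces $|\sigma(\zeta)-\alpha|_p=1$; such $\sigma$ contribute $0$. In the remaining case the order of $\eta_\sigma$ is a power of $p$ dividing $p^k$, and injectivity of $\sigma\mapsto\sigma(\zeta)$ guarantees that the $\eta_\sigma$ take distinct values. The term $\sigma=\sigma_0$ (where $\eta_{\sigma_0}=1$) is handled directly by Lemma~\ref{lem:absAlphaZeta}, giving $\log|\sigma_0(\zeta)-\alpha|_p\ge-c(F)$. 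For the remaining $p$-power terms, each $\log|1-\eta|_p$ is $\le 0$, so enlarging the range of summation only decreases the sum, whence
$$\sum\log|\eta_\sigma-1|_p\;\ge\;\sum_{\substack{\eta^{p^k}=1\\ \eta\neq 1}}\log|1-\eta|_p\;=\;\log|p^k|_p\;=\;-k\log p,$$
where the middle equality applies Lemma~\ref{lem:philIn1} to a primitive $p^k$-th root of unity. Summing the three contributions yields the claimed bound $-c(F)-k\log p$.

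The only real idea here is the use of the minimality of $\sigma_0$ to convert distances to $\alpha$ into intrinsic distances $|\eta_\sigma-1|_p$ between roots of unity; after that the bounds fall out cleanly from Lemmas~\ref{lem:notpPower}, \ref{lem:philIn1} and~\ref{lem:absAlphaZeta}. The mild pitfall to watch out for is the degenerate situation $|\sigma_0(\zeta)-\alpha|_p=1$, but there minimality forces every summand to be $0$ and the inequality is trivial, so no separate treatment is required.
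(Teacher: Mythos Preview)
Your proposal is correct and follows essentially the same approach as the paper: choose $\sigma_0$ minimising $|\sigma(\zeta)-\alpha|_p$, split $M$ according to whether the order of $\eta_\sigma=\sigma(\zeta)/\sigma_0(\zeta)$ is prime to $p$ or a $p$-power, and combine Lemmas~\ref{lem:notpPower}, \ref{lem:philIn1}, and~\ref{lem:absAlphaZeta}. Your presentation is in fact a little cleaner, since you state the key inequality $|\sigma(\zeta)-\alpha|_p\ge|\eta_\sigma-1|_p$ once and uniformly (it follows from the ultrametric estimate $|\sigma(\zeta)-\sigma_0(\zeta)|_p\le\max\{|\sigma(\zeta)-\alpha|_p,|\sigma_0(\zeta)-\alpha|_p\}=|\sigma(\zeta)-\alpha|_p$), whereas the paper carries out the case distinction $\epsilon=1$ versus $\epsilon<1$ explicitly and then, within $M_2$, again distinguishes $|\sigma(\zeta)-\sigma_0(\zeta)|_p=\epsilon$ from $\ne\epsilon$.
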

\begin{proof}Let $N=p^k m$. Observe that $|\sigma(\zeta) - \alpha|_p \leq \max\{|\sigma(\zeta)|_p,|\alpha|_p\}=1$ for all $\sigma\in M$. So each summand is nonpositive and the sum bounded by zero. For the lower bound we define $$\epsilon := \min\{|\sigma(\zeta)-\alpha|_p: \sigma\in M\}>0.$$ Fix $\sigma_0\in M$ such that $\epsilon = |\sigma_0(\zeta)-\alpha|_p.$  Using the observation above we see that $\epsilon\leq 1.$ We split up into the two cases $\epsilon =1$ and $\epsilon<1$. In the first case we have $|\sigma(\zeta)-\alpha|_p=1$ for all $\sigma\in M$ and so $$\sum_{\sigma\in G}\log|\sigma(\zeta)-\alpha|_p=0\geq -\left({c}(F)+k\log(p)\right).$$
In the second case we have $\epsilon <1$. The set $M\setminus \{\sigma_0\}$ is the disjoint union of 
\begin{align*}M_1 &= \{\sigma\in M: \mathrm{ord}(\sigma(\zeta)/\sigma_0(\zeta)) \text{ is not a power of } p\} \text{ and }\\
M_2 &= \{\sigma\in M: \sigma\neq \sigma_0 \text{ and ord}(\sigma(\zeta)/\sigma_0(\zeta)) |p^k\}.\end{align*}
Suppose $\sigma\in M_1$. By construction the order of $\sigma(\zeta)/\sigma_0(\zeta)$ is divisible by a prime $l\neq p$. Therefore by Lemma \ref{lem:notpPower} we have $|\sigma(\zeta)/\sigma_0(\zeta)-1|_p= 1$ and hence $|\sigma(\zeta)-\sigma_0(\zeta)|_p=1$. We use that $|\cdot|_p$ is non-archimedean and $|\sigma_0(\zeta)-\alpha|_p = \epsilon < 1 = |\sigma(\zeta)-\sigma_0(\zeta)|_p$ to deduce
$$|\sigma(\zeta)-\alpha|_p = |(\sigma(\zeta)-\sigma_0(\zeta)) + (\sigma_0(\zeta)-\alpha)|_p=\max\{|\sigma(\zeta)-\sigma_0(\zeta)|_p,|\sigma_0(\zeta)-\alpha|_p\} =1.$$ Thus the sum over $M_1$ is \begin{equation}\label{eq:G1}\sum_{\sigma\in M_1} \log |\sigma(\zeta)-\alpha|_p =0.\end{equation}
Now suppose $\sigma\in M_2$. If $|\sigma(\zeta)-\sigma_0(\zeta)|_p\neq \epsilon$, then
$$|\sigma(\zeta)-\alpha|_p=|(\sigma(\zeta)-\sigma_0(\zeta)) + (\sigma_0(\zeta)-\alpha)|_p=\max\{|\sigma(\zeta)-\sigma_0(\zeta)|_p,|\sigma_0(\zeta)-\alpha|_p\}$$ and in particular $|\sigma(\zeta)-\alpha|_p\geq |\sigma(\zeta)-\sigma_0(\zeta)|_p$. If $|\sigma(\zeta)-\sigma_0(\zeta)|_p = \epsilon$, then by definition of $\epsilon$ we find that $|\sigma(\zeta)-\alpha|_p\geq |\sigma(\zeta)-\sigma_0(\zeta)|_p$ holds also in this case. Therefore
$$\sum_{\sigma\in M_2} \log|\sigma(\zeta)-\alpha|_p\geq \sum_{\sigma\in M_2} \log|\sigma(\zeta)/\sigma_0(\zeta)-1|_p\geq \sum_{\xi\in \mu_{p^k}\setminus\{1\}} \log|\xi - 1|_p$$ by the definition of $M_2$. We may pick up extra terms in the final expression, but they are harmless as $|\xi-1|_p\leq 1$ for all $\xi\in \mu_{p^k}$. Observe that $\mu_{p^k}\setminus \{1\} = \{\rho^i: i=1,\dots, p^{k -1}\}$ for any root of unity $\rho\in\mathbb{C}_p$ of order $p^k$. So by Lemma \ref{lem:philIn1} we find \begin{equation}\label{eq:G2}\sum_{\sigma\in M_2} \log|\sigma(\zeta)-\alpha|_p\geq \log\left|\prod_{i=1}^{p^k -1} (1-\rho^i)\right|_p= \log|p^k|_p=k \log|p|_p = -k\log(p).\end{equation}
By Lemma \ref{lem:absAlphaZeta} we have $\epsilon=|\sigma_0(\zeta)-\alpha|_p\geq \exp(-c(F))$ and hence $\log(\epsilon)\geq -c(F)$.
We add the sums \eqref{eq:G1} and \eqref{eq:G2} over $M_1$ and $M_2$ respectively, to the remaining term $\log |\sigma_0 (\zeta) - \alpha|_p$ which equals $\log(\epsilon)$ and get $\sum_{\sigma\in M}\log|\sigma(\zeta)-\alpha|_p
 \geq -(c(F)+k \log(p))$. \end{proof}

\begin{lemma}\label{lem:univariate} Let $F \in \mathbb{C}_p[X]\setminus\{0\}$ and $\zeta\in\mathbb{C}_p$ a root of unity of order $p^k m, m\nmid p$. Let $M$ be a non empty subset of $\mathrm{Gal}(\mathbb{Q}(\zeta)/\mathbb{Q})$ such that $F(\sigma(\zeta))\neq 0$, for all $\sigma \in M.$ Then we have
$$\left|\frac{1}{\#M}\sum_{\sigma\in M} \log|F(\sigma(\zeta))|_p-\log|F|_p\right|\le \frac{\deg(F)}{\#M}\left(c(F)+k\log(p)\right).$$\end{lemma}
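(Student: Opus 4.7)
The plan is to combine Lemmas \ref{lem:errAbs1} and \ref{lem:alphaabs1} in the most direct way possible, since all the real work has already been done in the preceding lemmas.

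First I would factor $F = a_0 \prod_{i=1}^{\deg(F)}(X - \alpha_i)$ over $\mathbb{C}_p$ (which is algebraically closed). Applying Lemma \ref{lem:errAbs1} immediately rewrites the quantity we want to bound as
$$\frac{1}{\#M}\sum_{\sigma\in M}\log|F(\sigma(\zeta))|_p - \log|F|_p = \frac{1}{\#M}\sum_{\substack{i=1 \\ |\alpha_i|_p = 1}}^{\deg(F)}\sum_{\sigma\in M}\log|\sigma(\zeta) - \alpha_i|_p,$$
so only the roots on the unit circle contribute. The number of such $i$ is at most $\deg(F)$.

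Next, for each index $i$ with $|\alpha_i|_p = 1$, Lemma \ref{lem:alphaabs1} (whose hypotheses are satisfied because $F(\sigma(\zeta)) \ne 0$ for every $\sigma \in M$, so in particular $\alpha_i \ne \sigma(\zeta)$) gives
$$0 \ge \sum_{\sigma\in M}\log|\sigma(\zeta) - \alpha_i|_p \ge -(c(F) + k\log(p)).$$
Taking absolute values on the inner sum and using the triangle inequality to bring the absolute value inside the outer sum over $i$ yields
$$\left|\frac{1}{\#M}\sum_{\sigma\in M}\log|F(\sigma(\zeta))|_p - \log|F|_p\right| \le \frac{1}{\#M}\cdot\#\{i : |\alpha_i|_p = 1\}\cdot (c(F) + k\log(p)),$$
and bounding the cardinality by $\deg(F)$ produces the claimed estimate.

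There is essentially no obstacle here, the statement is a clean aggregation of Lemma \ref{lem:errAbs1} and Lemma \ref{lem:alphaabs1}. The only minor point worth verifying is that the hypotheses of Lemma \ref{lem:alphaabs1} transfer correctly: $F(\sigma(\zeta)) \ne 0$ for every $\sigma \in M$ ensures none of the $\alpha_i$ coincide with any $\sigma(\zeta)$, so the logarithms are finite and the lemma applies to each $\alpha_i$ individually.
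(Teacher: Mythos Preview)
Your proof is correct and essentially identical to the paper's own argument: factor $F$, apply Lemma~\ref{lem:errAbs1} to reduce to the roots with $|\alpha_i|_p=1$, bound each inner sum via Lemma~\ref{lem:alphaabs1}, and use the triangle inequality together with the trivial bound $\#\{i:|\alpha_i|_p=1\}\le\deg(F)$. There is nothing to add.
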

\begin{proof}Write $F=a_0\prod_{i=1}^{\deg(F)} (X-\alpha_i)$. By Lemma \ref{lem:errAbs1} we have that 
$$\frac{1}{\#M}\sum_{\sigma\in M} \log|F(\sigma(\zeta))|_p - \log|F|_p = \frac{1}{\#M}\sum_{\substack{i=1 \\ |\alpha_i|_p=1}}^{\deg(F)}\sum_{\sigma\in M} \log|\sigma(\zeta)-\alpha_i|_p.$$ Suppose that $|\alpha_i|=1.$ We apply Lemma
\ref{lem:alphaabs1} and find $\left|\sum_{\sigma\in M}\log|\sigma(\zeta)-\alpha_i|_p\right|\leq c(F) + k\log(p)$. Therefore we have $$\left|\frac{1}{\#M}\sum_{\substack{i=1 \\ |\alpha_i|_p=1}}^{\deg(F)}\sum_{\sigma\in M} \log|\sigma(\zeta)-\alpha_i|_p\right|\leq \frac{1}{\#M}\sum_{\substack{i=1 \\ |\alpha_i|_p=1}}^{\deg(F)}\left|\sum_{\sigma\in M} \log|\sigma(\zeta)-\alpha_i|_p\right|\leq \frac{\deg(F)}{\#M}\left(c(F)+k\log(p)\right).\qedhere$$\end{proof}

\section{Multivariate case}\label{sec:multVariate}
In this section we follow the same strategy as in $\cite{Atoral}$ to prove Theorem \ref{thm:padic} which will directly imply Theorem \ref{thm:pSimple}.

\subsection{$\Lambda_{\boldsymbol\zeta}$ and related definitions}
In this subsection we associate to a torsion point $\boldsymbol\zeta$ in $\mathbb G_m^n$ a discrete subgroup $\Lambda_{\boldsymbol\zeta}<\mathbb Z^n$ and compute its rank and determinant. The ground field is $\mathbb C_p$ for some prime $p$.

\begin{definition}Let $n\in\mathbb{N}$. A lattice $\Lambda$ in $\mathbb{R}^n$ is a finitely generated and discrete subgroup of $\mathbb{R}^n$. We denote by rk$(\Lambda)$ its rank and by $\det(\Lambda)$ the determinant given by $\det(\Lambda)=|\det(A)|^{1/2}$, where $A=\left(\left<\mathbf{v}_i,\mathbf{v}_j\right>\right)_{i,j}$ for a $\mathbb{Z}$-basis $\mathbf{v}_1,\dots, \mathbf{v}_m$ of $\Lambda$. \end{definition}

\begin{remark}\label{rk:det}Let $\mathbf v_1,\dots, \mathbf v_m$ be a basis of $\Lambda$. Let $V=(\mathbf v_1,\dots, \mathbf v_m)$. Then the determinant of $\Lambda$ is given by $|\det(V^\top V)|^{1/2}$. If $\mathbf w_1,\dots, \mathbf w_m$ is another basis of $\Lambda$ and $W=(\mathbf w_1,\dots, \mathbf w_m)$, then there exists $B\in\mathrm{GL}_m(\mathbb Z)$ such that $W=VB$. Thus we have $\det(W^\top W)=\det(B^\top V^\top VB)=\det(V^\top V)$ and hence the determinant is independent of the choice of the basis. Moreover, suppose that $\Lambda<\mathbb Z^n$ is a lattice of rank $n$ and $\mathbf b_1,\dots, \mathbf b_n$ is a basis of $\Lambda$. Let $B=(\mathbf b_1,\dots, \mathbf b_n)\in \mathrm{Mat}_n(\mathbb Z).$ Then we have $$\det(\Lambda)=|\det(B^\top B)|^{1/2}=|\det(B)|=[\mathbb Z^n:B\mathbb Z^n]=[\mathbb Z^n:\Lambda].$$\end{remark}

\begin{definition}Let $\Lambda\subseteq\mathbb{R}^n$ be a lattice with positive rank. We define $$\lambda_1(\Lambda) = \min\{|\mathbf u| : \mathbf u\in \Lambda\setminus\{{0}\}\}$$ where $|\cdot|$ denotes the maximum norm. It is convenient to define $\lambda_1(\{{0}\})=\infty$.\end{definition}

\begin{definition}For $\boldsymbol{\zeta}\in\mathbb{G}_m^n$ we define $\Lambda_{\boldsymbol{\zeta}}=\{\mathbf{u}\in\mathbb{Z}^n: \boldsymbol{\zeta}^{\mathbf{u}}=1\}.$\end{definition}

\begin{lemma}\label{lem:detRk}Let $\boldsymbol{\zeta}\in\mathbb{G}_m^n$ be of order $N$. Then $\Lambda_{\boldsymbol{\zeta}}$ is a lattice of rank $n$ and determinant $N$.\end{lemma}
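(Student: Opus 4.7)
My plan is to realize $\Lambda_{\boldsymbol{\zeta}}$ as the kernel of a natural homomorphism from $\mathbb{Z}^n$, so that both the rank and the determinant follow from a standard first-isomorphism-theorem calculation together with Remark \ref{rk:det}.

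First I would define the group homomorphism $\phi\colon \mathbb{Z}^n \to \mathbb{C}_p^*$ by $\phi(\mathbf{u}) = \boldsymbol{\zeta}^{\mathbf{u}}$. By definition $\Lambda_{\boldsymbol{\zeta}} = \ker(\phi)$, so $\Lambda_{\boldsymbol{\zeta}}$ is a subgroup of $\mathbb{Z}^n$, hence automatically discrete and finitely generated. For the rank, I use that $\boldsymbol{\zeta}$ has order $N$, which means $\zeta_i^N = 1$ for every $i$, so $N\mathbf{e}_i \in \Lambda_{\boldsymbol{\zeta}}$ for each $i$. Therefore $N\mathbb{Z}^n \subseteq \Lambda_{\boldsymbol{\zeta}}$, which in particular forces $\mathrm{rk}(\Lambda_{\boldsymbol{\zeta}}) = n$ and makes $\Lambda_{\boldsymbol{\zeta}}$ a sublattice of $\mathbb{Z}^n$ of finite index.

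For the determinant I invoke the last assertion of Remark \ref{rk:det}, which gives $\det(\Lambda_{\boldsymbol{\zeta}}) = [\mathbb{Z}^n : \Lambda_{\boldsymbol{\zeta}}]$ now that we know $\Lambda_{\boldsymbol{\zeta}}$ has full rank $n$. By the first isomorphism theorem applied to $\phi$, this index equals the order of the image $\langle \zeta_1, \ldots, \zeta_n\rangle \subseteq \mathbb{C}_p^*$. Since the image is a finite subgroup of the multiplicative group of a field, it is cyclic; therefore its order equals its exponent, namely the smallest $m \in \mathbb{N}$ with $\zeta_i^m = 1$ for all $i$. By definition of the order of a tuple in $\mathbb{G}_m^n$ with componentwise multiplication, this $m$ is precisely $N$, so $\det(\Lambda_{\boldsymbol{\zeta}}) = N$.

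The only step that merits real attention is the identification $\#\langle \zeta_1, \ldots, \zeta_n\rangle = N$; a priori one only sees that this order divides the product $\prod_i \mathrm{ord}(\zeta_i)$, but cyclicity of finite subgroups of $\mathbb{C}_p^*$ upgrades this to ``order equals exponent'', and the exponent matches the definition of $\mathrm{ord}(\boldsymbol{\zeta}) = N$ directly. Everything else is bookkeeping.
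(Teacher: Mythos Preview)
Your proof is correct and follows essentially the same approach as the paper: both define the homomorphism $\phi(\mathbf{u})=\boldsymbol{\zeta}^{\mathbf{u}}$, note $N\mathbb{Z}^n\subseteq\Lambda_{\boldsymbol{\zeta}}$ for full rank, and then combine Remark~\ref{rk:det} with the first isomorphism theorem. The only difference is cosmetic: the paper simply asserts $\mathrm{Im}(\phi)=\mu_N$, whereas you spell out the cyclicity argument identifying $\#\langle\zeta_1,\dots,\zeta_n\rangle$ with the exponent $N$.
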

\begin{proof}Since $\Lambda_{\boldsymbol \zeta}$ contains $N \mathbb Z^n$ , it has full rank. Consider the group homomorphism $\phi : \mathbb Z^n \mapsto \mathbb G_m$, given by $\phi(\mathbf a) = \boldsymbol{\zeta}^{\mathbf{a}}$. Then $\mathrm{ker}(\phi) = \Lambda_{\boldsymbol \zeta}$ and $\mathrm{Im}(\phi) = \mu_N$. By Remark \ref{rk:det} and the isomorphism theorem we find $\det(\Lambda_{\boldsymbol{\zeta}})=[\mathbb Z^n : \Lambda_{\boldsymbol \zeta}] = \#\mu_N = N$.
\end{proof}

In the introduction to Section 5 in \cite{Atoral} Dimitrov and Habegger take a lattice $\Lambda\ne \{0\}$ such that $\det(\Lambda)\ge 1$ and a parameter $\nu\in(0,1/2]$ and construct a lattice $\Lambda(\nu)$ which satisfies $\mathrm{rk}(\Lambda/\Lambda(\nu))\ge 1$.

\begin{definition}\label{def:lambda}Let $\boldsymbol{\zeta}\in\mathbb{G}_m^n$ be of order $N$ and $\nu\in(0,1/2]$ a parameter. By Lemma \ref{lem:detRk} $\Lambda_{\boldsymbol{\zeta}}$ has determinant $N\geq 1$, so $\Lambda_{\boldsymbol{\zeta}}(\nu)$ is defined and we can put $$\tilde{\Lambda}_{\boldsymbol{\zeta}}(\nu) = \{ u\in \mathbb{Z}^n: \text{ there is } k\in\mathbb{Z}\setminus\{0\} \text{ such that } ku\in\Lambda_{\boldsymbol{\zeta}}(\nu)\}.$$
For technical reasons we also define $$\tilde{\lambda}(\boldsymbol{\zeta};\nu) = \min\left\{\lambda_1(\tilde\Lambda_{\boldsymbol{\zeta}}(\nu)), N^{\nu^n/2}\right\}.$$\end{definition}

\subsection{Some Galois theory}
We state one of the main results in Galois theory which we will use a lot. We also show how a factorisation $\boldsymbol{\zeta=\xi\eta}$ can be used to rewrite an average as double sum.
\begin{theorem}\label{thm:lang}Let $K/k$ be a Galois extension and $F/k$ an arbitrary extension and assume that $K,F$ are subfields of some common field. Then $KF/F$ and $K/(K\cap F)$ are Galois extensions. Let $H=\mathrm{Gal}(KF/F)$ and $\sigma\in H$, then $\sigma|_K \in \mathrm{Gal}(K/(K\cap F))$ and the map $\sigma \mapsto \sigma|_K$ induces an isomorphism between $H$ and $\mathrm{Gal}(K/(K\cap F))$.\end{theorem}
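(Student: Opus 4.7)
The plan is to verify the three assertions in turn: (i) $KF/F$ is Galois, (ii) $K/(K\cap F)$ is Galois, and (iii) restriction to $K$ gives an isomorphism between the corresponding Galois groups. First I would recall the standard characterization of Galois extensions as splitting fields of families of separable polynomials. Since $K/k$ is Galois, $K$ is the splitting field over $k$ of some family $\mathcal{S}$ of separable polynomials in $k[X]$. The compositum $KF$ inside the ambient field is then the splitting field of $\mathcal{S}$ (viewed as a family in $F[X]$) over $F$, and the polynomials remain separable. This gives (i). For (ii), $K\cap F$ is an intermediate field of $K/k$, and a subextension of a Galois extension containing the base is always Galois over the base (again because $K$ is the splitting field of $\mathcal{S}$ over $K\cap F$).

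Next I would set up the restriction map $\rho:\mathrm{Gal}(KF/F)\to\mathrm{Gal}(K/(K\cap F))$ defined by $\rho(\sigma)=\sigma|_K$. Well-definedness is immediate once one checks that $\sigma|_K$ lands in $\mathrm{Aut}(K)$: since $K/k$ is normal, any $F$-automorphism of $KF$ sends $K$ into itself (it permutes the roots of each $P\in\mathcal{S}$, all of which lie in $K$). And $\sigma|_K$ fixes $K\cap F$ because $\sigma$ fixes all of $F$. That $\rho$ is a group homomorphism is clear. For injectivity, if $\sigma|_K=\mathrm{id}_K$ then $\sigma$ fixes both $K$ and $F$, so it fixes $KF$ and is the identity.

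The surjectivity is the crux. I would compute the fixed field of $\rho(\mathrm{Gal}(KF/F))$ inside $K$. An element $x\in K$ is fixed by every $\sigma|_K$ iff it is fixed by every $\sigma\in\mathrm{Gal}(KF/F)$, which by the Galois correspondence for $KF/F$ means $x\in F$. Thus the fixed field is exactly $K\cap F$. Since $K/(K\cap F)$ is Galois by (ii), the Galois correspondence (applied this time to $K/(K\cap F)$) forces $\rho(\mathrm{Gal}(KF/F))=\mathrm{Gal}(K/(K\cap F))$, which is surjectivity.

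The main obstacle is the surjectivity step, and specifically the need to invoke the Galois correspondence in both directions: once to identify the fixed field inside $KF$ as $F$, and once to conclude that a subgroup of $\mathrm{Gal}(K/(K\cap F))$ whose fixed field is $K\cap F$ must be the whole group. The remaining verifications are essentially formal consequences of normality and separability of $K/k$, which transfer to $KF/F$ under the compositum. In the infinite case one would use the Krull topology on the Galois groups, but the isomorphism is again induced by restriction and the argument is structurally identical.
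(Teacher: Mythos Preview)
Your proof is correct and is the standard textbook argument. The paper does not actually give a proof of this statement: it simply cites Theorem VI.1.12 in Lang's \emph{Algebra}, so there is nothing to compare beyond noting that your argument is essentially the one found there.
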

\begin{proof}This is Theorem VI.1.12 in \cite{Lang}.\end{proof}

\begin{lemma}\label{lem:splitSum}Let $\boldsymbol{\zeta}\in\mathbb{G}_m^n(\mathbb{C}_p)$ be a torsion point of order $N$ and $l\in\{1,\dots,n-1\}$. Let $\star_1$ denote the multiplication on $\mathbb G_m^n$ and let $\star_2:\mathbb G_m^l\times \mathbb G_m^{n-l}\to \mathbb G_m^n$ given by $\boldsymbol\eta\star_2\boldsymbol\xi = (\boldsymbol\eta,\boldsymbol\xi).$ Let $\star\in\{\star_1, \star_2\}$ and $\boldsymbol\zeta\in \mathbb G_m^n$ a torsion point of order $N$. Let $\boldsymbol{\eta,\xi}$ be two torsion points whose orders divide $N$ such that $\boldsymbol\zeta=\boldsymbol\eta\star\boldsymbol\xi$. Let $G<\mathrm{Gal}(\mathbb{Q}(\boldsymbol{\zeta})/\mathbb{Q})$ and $L\subseteq\mathbb{Q}(\boldsymbol{\zeta})$ its fixed field. Let $H=\mathrm{Gal}(L(\boldsymbol{\xi})/L(\boldsymbol{\xi})\cap L(\boldsymbol{\eta}))$ and for all $\tau\in \mathrm{Gal}(L(\boldsymbol{\eta})/L)$ let $\tilde{\tau}\in\mathrm{Gal}(L(\boldsymbol{\xi})/L)$ be such that $\tilde{\tau}\restriction_{L(\boldsymbol{\xi})\cap L(\boldsymbol{\eta})} = \tau\restriction_{L(\boldsymbol{\xi})\cap L(\boldsymbol{\eta})}$. Finally let $f: \{\sigma(\boldsymbol\zeta):\sigma \in G\} \to \mathbb{R}$ be a function. Then we have $$\frac{1}{\#G}\sum_{\sigma\in G} f(\sigma(\boldsymbol\zeta)) = \frac{1}{[L(\boldsymbol{\eta}):L]}\sum_{\tau\in \mathrm{Gal}(L(\boldsymbol{\eta})/L)} \frac{1}{\#H} \sum_{\sigma\in\tilde{\tau}H} f(\tau(\boldsymbol\eta)\star\sigma(\boldsymbol\xi)).$$\end{lemma}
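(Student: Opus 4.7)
The plan is to realize both sides as the same fully normalized average over $\mathrm{Gal}(L(\boldsymbol{\eta},\boldsymbol{\xi})/L)$, first by inflating the left hand side to this larger Galois group and then by factoring the resulting sum through the restrictions to $L(\boldsymbol{\eta})$ and $L(\boldsymbol{\xi})$ separately. For the setup, every coordinate of $\boldsymbol{\zeta}$ is a root of unity of order dividing $N$, so one has $L\subseteq \mathbb{Q}(\boldsymbol{\zeta})\subseteq \mathbb{Q}(\mu_N)\subseteq L(\mu_N)$, and since $L(\mu_N)/L$ is abelian every intermediate field is Galois over $L$. In particular $L(\boldsymbol{\eta})/L$, $L(\boldsymbol{\xi})/L$, $L(\boldsymbol{\eta},\boldsymbol{\xi})/L$ are all Galois, and $L(\boldsymbol{\zeta})=\mathbb{Q}(\boldsymbol{\zeta})$ because $L\subseteq \mathbb{Q}(\boldsymbol{\zeta})$. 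By the Galois correspondence $G=\mathrm{Gal}(\mathbb{Q}(\boldsymbol{\zeta})/L)$, and the inclusion $\mathbb{Q}(\boldsymbol{\zeta})\subseteq L(\boldsymbol{\eta},\boldsymbol{\xi})$ coming from $\boldsymbol{\zeta}=\boldsymbol{\eta}\star\boldsymbol{\xi}$ gives a surjection $\mathrm{Gal}(L(\boldsymbol{\eta},\boldsymbol{\xi})/L)\twoheadrightarrow G$ with fibers of constant size $[L(\boldsymbol{\eta},\boldsymbol{\xi}):\mathbb{Q}(\boldsymbol{\zeta})]$.

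For the inflation step, $f(\sigma(\boldsymbol{\zeta}))$ depends only on the restriction of $\sigma$ to $\mathbb{Q}(\boldsymbol{\zeta})$, so averaging over the fibers of the restriction map and using $\tilde\sigma(\boldsymbol{\zeta})=\tilde\sigma(\boldsymbol{\eta})\star\tilde\sigma(\boldsymbol{\xi})$ (valid for either choice of $\star$) transforms the left hand side into
$$\frac{1}{[L(\boldsymbol{\eta},\boldsymbol{\xi}):L]}\sum_{\tilde\sigma\in\mathrm{Gal}(L(\boldsymbol{\eta},\boldsymbol{\xi})/L)}f\bigl(\tilde\sigma(\boldsymbol{\eta})\star\tilde\sigma(\boldsymbol{\xi})\bigr).$$
Next I would parametrize each $\tilde\sigma$ by the pair of restrictions $(\tilde\sigma|_{L(\boldsymbol{\eta})},\tilde\sigma|_{L(\boldsymbol{\xi})})$. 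Since $L(\boldsymbol{\eta},\boldsymbol{\xi})$ is generated over $L$ by $L(\boldsymbol{\eta})\cup L(\boldsymbol{\xi})$, this pair determines $\tilde\sigma$ uniquely, and a pair $(\tau,\sigma)$ arises if and only if $\tau$ and $\sigma$ agree on $L(\boldsymbol{\eta})\cap L(\boldsymbol{\xi})$. For a fixed $\tau\in\mathrm{Gal}(L(\boldsymbol{\eta})/L)$ the set of compatible $\sigma\in\mathrm{Gal}(L(\boldsymbol{\xi})/L)$ is therefore exactly the coset $\tilde\tau H$.

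Finally, Theorem \ref{thm:lang} applied with $K=L(\boldsymbol{\xi})$ and $F=L(\boldsymbol{\eta})$ over the base $L$ gives $\#H=[L(\boldsymbol{\eta},\boldsymbol{\xi}):L(\boldsymbol{\eta})]$, and hence $[L(\boldsymbol{\eta},\boldsymbol{\xi}):L]=[L(\boldsymbol{\eta}):L]\cdot\#H$; substituting these into the inflated sum and splitting the $\tilde\sigma$-sum as just described yields the claimed identity. The only genuine content of the argument is Theorem \ref{thm:lang}, which guarantees that the coset $\tilde\tau H$ has precisely the cardinality needed for the two normalizations to match, and I do not expect any real obstacle beyond the careful tracking of indices.
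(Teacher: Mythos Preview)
Your proposal is correct and follows essentially the same route as the paper: both arguments reduce to the fiber-product description of $\mathrm{Gal}(L(\boldsymbol\eta,\boldsymbol\xi)/L)$ as compatible pairs $(\tau,\sigma)$ via Theorem~\ref{thm:lang}, then identify the fiber over each $\tau$ with the coset $\tilde\tau H$ and match the normalizations using $[L(\boldsymbol\eta,\boldsymbol\xi):L]=[L(\boldsymbol\eta):L]\cdot\#H$. The only cosmetic difference is that the paper first proves the equality $\mathbb{Q}(\boldsymbol\zeta)=L(\boldsymbol\eta,\boldsymbol\xi)$ (using that $\boldsymbol\zeta$ has exact order $N$, so $\mathbb{Q}(\boldsymbol\zeta)=\mathbb{Q}(\omega_N)$ already contains the coordinates of $\boldsymbol\eta,\boldsymbol\xi$) and works directly with $G$, whereas your inflation step accommodates a potentially strict inclusion that in fact never occurs.
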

\begin{proof}Since the order of $\boldsymbol{\zeta}$ is $N$ we have $L(\boldsymbol{\zeta})=L(\omega_N)$ for a primitive $N$-th root of unity $\omega_N\in\mathbb C_p$. The orders of $\boldsymbol{\xi}$ and $\boldsymbol{\eta}$ divide $N$, so the coordinates are powers of $\omega_N$ and we find $L(\boldsymbol{\zeta})=L(\boldsymbol{\xi},\boldsymbol{\eta})$. Because $L\subseteq\mathbb{Q}(\boldsymbol{\zeta})$ we have $\mathbb{Q}(\boldsymbol{\zeta})=L(\boldsymbol{\zeta})$. By definition of $L$ we have ${G=\mathrm{Gal}(\mathbb{Q}(\boldsymbol{\zeta})/L)=\mathrm{Gal}(L(\boldsymbol{\zeta})/L)=\mathrm{Gal}(L(\boldsymbol{\xi,\eta})/L)}$. Applying Theorem \ref{thm:lang} to $L(\boldsymbol{\xi})$ and $L(\boldsymbol{\eta})$ shows that $\psi:\mathrm{Gal}(L(\boldsymbol{\xi,\eta})/L(\boldsymbol{\eta}))\to \mathrm{Gal}(L(\boldsymbol{\xi})/L(\boldsymbol{\xi})\cap L(\boldsymbol{\eta}))=H$, $\sigma \mapsto \sigma \restriction_{L(\boldsymbol{\xi})}$ is an isomorphism. Consider the map \begin{align*}\phi: G&\to M=\{(\sigma,\tau)\in \mathrm{Gal}(L(\boldsymbol{\xi})/L)\times \mathrm{Gal}(L(\boldsymbol{\eta})/L): \sigma\restriction_{L(\boldsymbol{\xi})\cap L(\boldsymbol{\eta})} = \tau\restriction_{L(\boldsymbol{\xi})\cap L(\boldsymbol{\eta})}\}\\
\sigma &\mapsto (\sigma\restriction_{L(\boldsymbol{\xi})},\sigma\restriction_{L(\boldsymbol{\eta})}).\end{align*} We now want to show that $\phi$ is a bijection. So let $(\sigma, \tau)\in M$. Let $\hat{\tau}\in\mathrm{Gal}(L(\boldsymbol{\xi,\eta})/L)$ be an extension of $\tau$ and $\tilde{\tau}$ the restriction of $\hat{\tau}$ to $L(\boldsymbol{\xi})$. Since $\sigma\restriction_{L(\boldsymbol{\xi})\cap L(\boldsymbol{\eta})} = \tau\restriction_{L(\boldsymbol{\xi})\cap L(\boldsymbol{\eta})}=\tilde{\tau}\restriction_{L(\boldsymbol{\xi})\cap L(\boldsymbol{\eta})}$, the automorphism $\sigma\tilde{\tau}^{-1}\in\mathrm{Gal}(L(\boldsymbol{\xi})/L)$ restricts to the identity on $L(\boldsymbol{\xi})\cap L(\boldsymbol{\eta})$ and therefore $\sigma\tilde{\tau}^{-1}\in H$. Since $\psi$ is surjective, there exists $\hat{\sigma}\in \mathrm{Gal}(L(\boldsymbol{\xi,\eta})/L(\boldsymbol{\eta}))$ such that $\hat{\sigma}\restriction_{L(\boldsymbol{\xi})}=\sigma\tilde{\tau}^{-1}$. We have $\hat{\sigma}\hat{\tau}\in G$ and $$\hat{\sigma}\hat{\tau}\restriction_{L(\boldsymbol{\xi})}=\hat{\sigma}\restriction_{L(\boldsymbol{\xi})}\hat{\tau}\restriction_{L(\boldsymbol{\xi})}= \sigma\tilde{\tau}^{-1}\tilde{\tau}=\sigma \text{ and } \hat{\sigma}\hat{\tau}\restriction_{L(\boldsymbol{\eta})}=\hat{\sigma}\restriction_{L(\boldsymbol{\eta})}\hat{\tau}\restriction_{L(\boldsymbol{\eta})}= \text{id}_{L(\boldsymbol{\eta})}\tau=\tau,$$ so $\phi$ is surjective. To see that $\phi$ is also injective, let $\sigma,\tau\in G$ such that $\phi(\sigma)=\phi(\tau)$. Since the coordinates of $\boldsymbol{\xi}$ are in $L(\boldsymbol{\xi})$ and those of $\boldsymbol{\eta}$ in $ L(\boldsymbol{\eta})$, we find $\sigma(\boldsymbol{\xi})=\tau(\boldsymbol{\xi})$ and $\sigma(\boldsymbol{\eta})=\tau(\boldsymbol{\eta})$. Because any automorphism in $G$ is determined by its values at the coordinates of $\boldsymbol{\xi}$ and $\boldsymbol{\eta}$ we conclude $\sigma=\tau$. Note also that if $\phi(\sigma)=(\tau,\tau')$ then $\sigma(\boldsymbol{\zeta})=\sigma(\boldsymbol{\xi})\star\sigma(\boldsymbol{\eta})=\tau(\boldsymbol{\xi})\star\tau'(\boldsymbol{\eta})$. So we can write $$\sum_{\sigma\in G} f(\sigma(\boldsymbol{\zeta})) =\sum_{(\sigma,\tau)\in M} f(\sigma(\boldsymbol{\xi})\star\tau(\boldsymbol{\eta}))=\sum_{\tau\in \mathrm{Gal}(L(\boldsymbol\eta)/L)} \sum_{\substack{\sigma\in \mathrm{Gal}(L(\boldsymbol{\xi})/L)\\\sigma\restriction_{L(\boldsymbol{\xi})\cap L(\boldsymbol{\eta})} = \tau\restriction_{L(\boldsymbol{\xi})\cap L(\boldsymbol{\eta})}}} f(\tau(\boldsymbol{\eta})\star\sigma(\boldsymbol{\xi})).$$ 

We claim that $$\{\sigma\in \mathrm{Gal}(L(\boldsymbol{\xi})/L): \sigma\restriction_{L(\boldsymbol{\xi})\cap L(\boldsymbol{\eta})} = \tau\restriction_{L(\boldsymbol{\xi})\cap L(\boldsymbol{\eta})} \} = \tilde{\tau}H.$$ We have $\sigma\restriction_{L(\boldsymbol{\xi})\cap L(\boldsymbol{\eta})} = \tau\restriction_{L(\boldsymbol{\xi})\cap L(\boldsymbol{\eta})}=\tilde{\tau}\restriction_{L(\boldsymbol{\xi})\cap L(\boldsymbol{\eta})}$ if and only if $\sigma\tilde{\tau}^{-1}\in\mathrm{Gal}(L(\boldsymbol{\xi})/L)$ restricts to the identity on $L(\boldsymbol{\xi})\cap L(\boldsymbol{\eta})$ if and only if $\sigma\tilde{\tau}^{-1}\in H$ which is equivalent to $\sigma\in\tilde{\tau}H$. To see that $\#G = [L(\boldsymbol{\eta}):L] \#H$, we consider the group homomorphism $G\to \mathrm{Gal}(L(\boldsymbol{\eta})/L)$ given by restriction to $L(\boldsymbol{\eta})$. The kernel of this surjective map is $\mathrm{Gal}(L(\boldsymbol{\xi,\eta})/L(\boldsymbol{\eta}))$ which is isomorphic to $H$ under $\psi$.\end{proof}

\subsection{Preparatory results}
The main goal of this subsection is to prove bounds in any dimension, under some condition which ensures that the Gauss norm of the univariate polynomial we construct equals $|F|_p$. We also prove some statements about the Gauss norm.
\begin{definition}For $N\in \mathbb{N}$ let $\Gamma_N = \left(\mathbb{Z}/N\mathbb{Z}\right)^*$ denote the invertible elements modulo $N$. If $M|N$ we have a canonical and surjective group homomorphism $\Gamma_N\to \Gamma_M$ given by reduction. The conductor $f_G$ of a subgroup $G<\Gamma_N$ is defined as $$f_G=\min\{f: f|N\text{ and } \ker (\Gamma_N\to \Gamma_f)<G\}.$$ This is well defined since the kernel of $\Gamma_N\to\Gamma_N$ is trivial and hence always contained in $G$.\end{definition}

\begin{lemma}\label{lem:conductor}Let $N$ be a natural number and $\zeta\in\mathbb{C}_p$ a root of unity of order $N$. Then $\Gamma_N$ is naturally isomorphic to $\mathrm{Gal}(\mathbb{Q}(\zeta)/\mathbb{Q})$. Let $G<\Gamma_N$. Through the isomorphism it makes sense to speak about the fixed field $L_G\subseteq \mathbb{Q}(\zeta)$ of $G$. Let $f\geq 1$ be an integer and denote by $\omega_f$ a root of unity of order $f$ in $\mathbb C_p$. Then $L_G\subseteq \mathbb{Q}(\omega_f)$ if and only if $f_G | f$. In particular the conductor $f_G$ depends only on the fixed field $L_G$.\end{lemma}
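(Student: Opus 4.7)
The plan is to use the Galois theory of cyclotomic extensions together with the classical identity $\mathbb{Q}(\omega_a)\cap \mathbb{Q}(\omega_b)=\mathbb{Q}(\omega_{\gcd(a,b)})$. First I would establish the natural isomorphism $\Gamma_N\cong \mathrm{Gal}(\mathbb{Q}(\zeta)/\mathbb{Q})$ sending $a\mapsto \sigma_a$ with $\sigma_a(\zeta)=\zeta^a$; this is the standard cyclotomic result, relying only on the irreducibility of $\Phi_N$ over $\mathbb{Q}$, which applies equally inside $\mathbb{C}_p$. For any divisor $d$ of $N$, choosing $\omega_d=\zeta^{N/d}$, a direct computation gives $\sigma_a(\omega_d)=\omega_d$ iff $a\equiv 1\pmod d$, so under the isomorphism $\mathrm{Gal}(\mathbb{Q}(\zeta)/\mathbb{Q}(\omega_d))$ corresponds to $\ker(\Gamma_N\to \Gamma_d)$. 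By Galois correspondence, for such $d$ one has $L_G\subseteq \mathbb{Q}(\omega_d)$ iff $\ker(\Gamma_N\to\Gamma_d)\subseteq G$.

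Next I would reduce the general $f\geq 1$ to the case of divisors of $N$. Since $L_G\subseteq \mathbb{Q}(\zeta)=\mathbb{Q}(\omega_N)$, the inclusion $L_G\subseteq\mathbb{Q}(\omega_f)$ is equivalent to $L_G\subseteq \mathbb{Q}(\omega_f)\cap \mathbb{Q}(\omega_N)=\mathbb{Q}(\omega_{\gcd(f,N)})$, invoking the cyclotomic intersection formula. Setting $d=\gcd(f,N)$, the problem becomes the membership statement $\gcd(f,N)\in \mathcal{S}_G$, where
$$\mathcal{S}_G=\{d:d\mid N,\ \ker(\Gamma_N\to \Gamma_d)\subseteq G\}.$$

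The main step is to show that $\mathcal{S}_G$ has a unique minimum element dividing every other element. The key observation is that $\mathcal{S}_G$ is closed under $\gcd$: if $d_1,d_2\in\mathcal{S}_G$ then by the correspondence $L_G$ lies in both $\mathbb{Q}(\omega_{d_1})$ and $\mathbb{Q}(\omega_{d_2})$, hence in $\mathbb{Q}(\omega_{d_1})\cap\mathbb{Q}(\omega_{d_2})=\mathbb{Q}(\omega_{\gcd(d_1,d_2)})$, so $\gcd(d_1,d_2)\in\mathcal{S}_G$. Since $\mathcal{S}_G$ is a finite nonempty set of divisors of $N$ closed under $\gcd$, it has a unique minimum (the $\gcd$ of all its members) and every element is a multiple of this minimum; by definition this minimum is $f_G$. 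Hence $\gcd(f,N)\in\mathcal{S}_G$ iff $f_G\mid \gcd(f,N)$, and since $f_G\mid N$ this simplifies to $f_G\mid f$, proving the equivalence.

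The final sentence of the lemma then follows immediately, because the equivalence just proven characterizes $f_G$ as the least positive integer $f$ with $L_G\subseteq\mathbb{Q}(\omega_f)$, a description that refers only to the field $L_G$. I do not expect any genuine obstacle; the only nontrivial ingredient is the cyclotomic intersection formula, which is classical and transfers verbatim from $\mathbb{C}$ to $\mathbb{C}_p$ via the abstract isomorphism of the cyclotomic extensions.
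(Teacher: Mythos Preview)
Your argument is correct and is precisely the standard cyclotomic Galois-theory computation one expects here: identify $\mathrm{Gal}(\mathbb{Q}(\zeta)/\mathbb{Q}(\omega_d))$ with $\ker(\Gamma_N\to\Gamma_d)$ for $d\mid N$, reduce arbitrary $f$ to $\gcd(f,N)$ via $\mathbb{Q}(\omega_a)\cap\mathbb{Q}(\omega_b)=\mathbb{Q}(\omega_{\gcd(a,b)})$, and observe that the set of admissible divisors is $\gcd$-closed so that its minimum $f_G$ divides all its members. The paper does not spell out any of this; it simply cites the introduction to Section~3 of \cite{Atoral} for the case $K=\mathbb{C}$ and notes that the same argument works over $\mathbb{C}_p$. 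Your write-up is thus a self-contained version of what the paper outsources, and the two are in agreement.
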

\begin{proof}This is proven for $K=\mathbb C$ in the introduction to Section 3 in \cite{Atoral}. The same argument works for $K=\mathbb C_p$.\end{proof}

\begin{definition}Let $n\in\mathbb{N}$ and $V=(v_{i,j})_{i,j=1}^n\in \mathrm{Mat}_n(\mathbb{Z})$. Then we denote by $|\cdot |$ the maximum norm $|V|=\max\{|v_{i,j}|: i,j=1,\dots, n\}.$\end{definition}

\begin{lemma}\label{lem:absVinv}Let $n\in\mathbb{N}$ and $V\in \text{GL}_n(\mathbb{Z})$. Then $|V^{-1}|\leq (n-1)! |V|^{n-1}$.\end{lemma}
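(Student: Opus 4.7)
The plan is to invoke Cramer's rule together with the Leibniz formula for the determinant. Since $V\in\mathrm{GL}_n(\mathbb Z)$, we have $\det(V)\in\{-1,1\}$, and hence
\[
V^{-1}=\frac{1}{\det(V)}\,\mathrm{adj}(V),
\]
so $|V^{-1}|=|\mathrm{adj}(V)|$. It therefore suffices to bound the entries of the adjugate matrix.

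By definition, each entry of $\mathrm{adj}(V)$ is, up to a sign, the determinant of an $(n-1)\times(n-1)$ submatrix $V'$ of $V$. Every entry of $V'$ has absolute value at most $|V|$. Expanding $\det(V')$ by the Leibniz formula gives
\[
\det(V')=\sum_{\pi\in S_{n-1}}\mathrm{sgn}(\pi)\prod_{i=1}^{n-1}v'_{i,\pi(i)},
\]
a sum of $(n-1)!$ terms, each of absolute value at most $|V|^{n-1}$. Hence each entry of $\mathrm{adj}(V)$ has absolute value at most $(n-1)!\,|V|^{n-1}$, and the claim follows.

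There is no real obstacle here; the only thing to watch is to use that $\det(V)=\pm 1$ (so that division by $\det(V)$ costs nothing), which is the defining property of $\mathrm{GL}_n(\mathbb Z)$.
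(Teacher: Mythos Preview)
Your proof is correct and follows essentially the same approach as the paper: use $\det(V)=\pm 1$ to reduce to bounding the adjugate, then estimate each cofactor via the Leibniz expansion of an $(n-1)\times(n-1)$ minor. The paper additionally isolates the trivial case $n=1$ explicitly, but your argument covers it as well once one interprets the empty product and $0!$ correctly.
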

\begin{proof}If $n=1$ we have $V=V^{-1}=\pm 1$, and the norm satisfies $|V^{-1}|=1=0!1^0=0!|V|^0$. If $n\geq 2$ and since the determinant is $\pm1$, the inverse $V^{-1}$ is given by the adjoint up to the sign. Its entries are up to the sign given by the determinant of a submatrix of $V$ formed by deleting a row and a column. By the formula of Leibniz such a determinant is given by the sum over $(n-1)!$ terms which each is a product of $n-1$ entries of $V$ up to sign. Each entry is bounded by $|V|$, so each summand by $|V|^{n-1}$ and hence $|V^{-1}|\leq (n-1)!|V|^{n-1}$.\end{proof}

\begin{lemma}\label{lem:decAtoral}Let $\nu\in (0,1/4]$ and let $\boldsymbol{\zeta}\in \mathbb{G}_m^n$ be of order $N$. Let $r=\mathrm{rk } (\Lambda_{\boldsymbol{\zeta}}/\Lambda_{\boldsymbol{\zeta}}(\nu))\in \{1,\dots, n\}$. Then there exists $V\in \text{GL}_n(\mathbb{Z})$ and a decomposition $\boldsymbol{\zeta=\eta \xi}$ with $\boldsymbol{\eta, \xi}$ in $\mathbb{G}_m^n$ of order $E$ and $M$, respectively such that the following holds
\begin{enumerate}\item $E\mid N, M\mid N$ and $E\leq N^{2\nu^{1+r}}$. In particular, $\mathbb{Q}(\boldsymbol{\zeta})=\mathbb{Q}(\boldsymbol{\xi,\eta})$ holds.
\item We have $|V|\ll_d N^{2v^{1+r}}$ with $\boldsymbol{\xi}^V=(1,\dots, 1,\boldsymbol{\xi'})$ and $\boldsymbol{\xi'}\in \mathbb{G}_m^r(\mathbb{C}_p)$.
\item If $G<\mathrm{Gal}(\mathbb{Q}(\boldsymbol{\xi})/\mathbb{Q})$ there exist $\mathbf{a}\in \mathbb{Z}^r$ and $\omega_M\in\mathbb{C}_p$ of order $M$ such that $\boldsymbol{\xi'} = \omega_M^{\mathbf{a}^\top}$, $$|\mathbf{a}|<M \text{ and } \frac{|\mathbf{a}|}{M} \ll_d \frac{[\mathrm{Gal}(\mathbb{Q}(\boldsymbol{\xi})/\mathbb{Q}):G]f_G^{1/2}}{N^{\nu^r/(6n)}}.$$
\item We have $\delta(\boldsymbol{\xi})\geq n^{-1/2}\tilde{\lambda}(\boldsymbol{\zeta};\nu)$.\end{enumerate}\end{lemma}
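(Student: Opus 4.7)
The plan is to construct $V$, $\boldsymbol\xi$, $\boldsymbol\eta$ from a $\mathbb Z$-basis of $\mathbb Z^n$ adapted to the chain $\Lambda_{\boldsymbol\zeta}(\nu)\subseteq\tilde\Lambda_{\boldsymbol\zeta}(\nu)\subseteq\Lambda_{\boldsymbol\zeta}\subseteq\mathbb Z^n$. By Lemma \ref{lem:detRk} and the hypothesis $\mathrm{rk}(\Lambda_{\boldsymbol\zeta}/\Lambda_{\boldsymbol\zeta}(\nu))=r$, the saturation $\tilde\Lambda_{\boldsymbol\zeta}(\nu)$ has rank $n-r$. The Minkowski-style construction of $\Lambda_{\boldsymbol\zeta}(\nu)$ in Section 5 of \cite{Atoral} provides a basis of $\tilde\Lambda_{\boldsymbol\zeta}(\nu)$ of norm $\ll_d N^{\nu^{1+r}}$ after clearing common factors via elementary divisors. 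Extending this to a basis $v_1,\dots,v_n$ of $\mathbb Z^n$ whose first $n-r$ vectors span $\tilde\Lambda_{\boldsymbol\zeta}(\nu)$ and whose last $r$ vectors are chosen among the standard basis vectors, the matrix $V=(v_1,\dots,v_n)\in\mathrm{GL}_n(\mathbb Z)$ satisfies $|V|\ll_d N^{2\nu^{1+r}}$, handling the norm part of (ii).

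I then define $\boldsymbol\xi$ as the unique torsion point with $\boldsymbol\xi^{v_i}=1$ for $i\le n-r$ and $\boldsymbol\xi^{v_i}=\boldsymbol\zeta^{v_i}$ for $i>n-r$, and let $\boldsymbol\eta=\boldsymbol\zeta\boldsymbol\xi^{-1}$. By construction $\boldsymbol\xi^V=(1,\dots,1,\boldsymbol\xi')$ with $\boldsymbol\xi'\in\mathbb G_m^r(\mathbb C_p)$, finishing (ii); and $\boldsymbol\eta^{v_i}=\boldsymbol\zeta^{v_i}$ for $i\le n-r$ are roots of unity whose common order $E$ divides the exponent of the finite group $\tilde\Lambda_{\boldsymbol\zeta}(\nu)/\Lambda_{\boldsymbol\zeta}(\nu)$, bounded by $N^{2\nu^{1+r}}$ via Remark \ref{rk:det} and the basis-norm estimate. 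The divisibilities $E\mid N$, $M\mid N$ and the equality $\mathbb Q(\boldsymbol\zeta)=\mathbb Q(\boldsymbol\xi,\boldsymbol\eta)$ in (i) are immediate since the coordinates of $\boldsymbol\xi$ and $\boldsymbol\eta$ are powers of $\omega_N$.

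For (iii), I write $\boldsymbol\xi'=\omega_M^{\mathbf a^\top}$ and exploit the freedom to replace $\omega_M$ by $\omega_M^c$ for $c\in\Gamma_M\cong\mathrm{Gal}(\mathbb Q(\omega_M)/\mathbb Q)$, which replaces $\mathbf a$ by $c\mathbf a\bmod M$. Lemma \ref{lem:conductor} identifies those $c$ preserving the fixed field $L_G\subseteq\mathbb Q(\omega_{f_G})$ as a union of cosets of $\ker(\Gamma_M\to\Gamma_{f_G})$ of total cardinality $\gg\varphi(M)/([\mathrm{Gal}(\mathbb Q(\boldsymbol\xi)/\mathbb Q):G]\varphi(f_G))$. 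Applying Minkowski's convex body theorem to the lattice in $\mathbb Z^r$ generated by these admissible translates of $\mathbf a$ together with $M\mathbb Z^r$ produces a short representative of sup-norm $\ll_d M([\mathrm{Gal}:G]\varphi(f_G)/\varphi(M))^{1/r}$. Combining with $\varphi(m)\gg_\epsilon m^{1-\epsilon}$ and absorbing polylogarithmic losses into the exponent $\nu^r/(6n)$ yields the claimed bound on $|\mathbf a|/M$.

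Part (iv) uses the identification $\Lambda_{\boldsymbol\xi^V}=\mathbb Z^{n-r}\oplus\Lambda_{\boldsymbol\xi'}$: any nonzero $\mathbf a\in\Lambda_{\boldsymbol\xi}$ equals $V\mathbf b$ with $\mathbf b=(\mathbf b_1,\mathbf b_2)\in\mathbb Z^{n-r}\oplus\Lambda_{\boldsymbol\xi'}$. If $\mathbf b_2=0$ then $\mathbf a\in\tilde\Lambda_{\boldsymbol\zeta}(\nu)$ and $|\mathbf a|\ge\lambda_1(\tilde\Lambda_{\boldsymbol\zeta}(\nu))$; if $\mathbf b_2\ne 0$ then the defining property of $\Lambda_{\boldsymbol\zeta}(\nu)$ from \cite{Atoral} forces a lower bound on $|\mathbf a|$ of order $N^{\nu^n/2}$ up to the factor $\sqrt n$ coming from converting between $\ell^\infty$ and $\ell^2$ norms via Lemma \ref{lem:absVinv}. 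Taking the minimum of the two cases yields $\delta(\boldsymbol\xi)\ge n^{-1/2}\tilde\lambda(\boldsymbol\zeta;\nu)$. The main obstacle will be (iii), where the precise interplay of the Galois coset count of Lemma \ref{lem:conductor} with the Minkowski argument must be executed carefully so that both the factor $f_G^{1/2}$ and the precise exponent $\nu^r/(6n)$ in the denominator emerge without loss.
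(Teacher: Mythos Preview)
Your plan is to reprove, directly over $\mathbb{C}_p$, what is essentially Proposition~5.1 of \cite{Atoral}. The paper takes a much shorter route: since the statement involves only the orders of torsion points, the lattices $\Lambda_{\boldsymbol\zeta}$, $\Lambda_{\boldsymbol\zeta}(\nu)$, $\tilde\Lambda_{\boldsymbol\zeta}(\nu)$, the quantities $\delta$, $\tilde\lambda$, and Galois groups identified with subgroups of $\Gamma_N$, all of these are invariant under any field isomorphism $\psi:\mathbb{Q}(\mathbf e(1/N))\to\mathbb{Q}(\omega_N)$. The paper therefore pulls $\boldsymbol\zeta$ back to $\tilde{\boldsymbol\zeta}=\psi^{-1}(\boldsymbol\zeta)\in\mathbb G_m^n(\mathbb C)$, applies Proposition~5.1 of \cite{Atoral} verbatim to obtain $V$, $\tilde{\boldsymbol\eta}$, $\tilde{\boldsymbol\xi}$, and then pushes forward by $\psi$. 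The only thing to check is that the conclusion of (iii), stated in \cite{Atoral} as $\tilde{\boldsymbol\xi}'=\mathbf e(\mathbf a\sigma/M)$ for some $\sigma\in G$, becomes $\boldsymbol\xi'=\omega_M^{\mathbf a^\top}$ for a suitable primitive $M$-th root of unity $\omega_M=\psi(\mathbf e(\sigma/M))$ in $\mathbb C_p$. This transport argument is a few lines long.

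Your direct construction is in the right spirit and would, if carried out, reproduce the proof of Proposition~5.1 in \cite{Atoral} rather than merely cite it; but as written it has a genuine gap in (iii). The set of ``admissible translates'' $\{c\mathbf a \bmod M : c\in G\}$ is not a sublattice of $\mathbb Z^r$, because $G\subset\Gamma_M=(\mathbb Z/M\mathbb Z)^*$ is a multiplicative subgroup, not an additive one, so Minkowski's theorem does not apply to it as stated. The actual argument in \cite{Atoral} for this step is more delicate and is precisely where the factor $f_G^{1/2}$ and the exponent $\nu^r/(6n)$ come from; you correctly flag this as the main obstacle, but your sketch does not resolve it. Given that the paper already black-boxes this into Proposition~5.1, the transport-by-isomorphism approach is both shorter and avoids re-deriving the hardest part.
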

\begin{proof}Recall that $\mathbf {e(x)}=(e^{2\pi i x_1},\dots, e^{2\pi i x_m})\in \mathbb G_m^m$ for all $\mathbf x=(x_1+\mathbb Z,\dots, x_m+\mathbb Z)\in (\mathbb R/\mathbb Z)^m$ and also for all $\mathbf x=(x_1,\dots, x_m)\in\mathbb R^m$. Let $\omega_N\in\mathbb{C}_p$ be a root of unity of order $N$ and $\psi$ be an isomorphism between $\mathbb{Q}(\mathbf e(1/N))$ and $\mathbb{Q}(\omega_N)$. Then since the order of $\boldsymbol{\zeta}$ equals $N$, all its coordinates are in the image of $\psi$ and therefore it makes sense to look at the preimage $\tilde{\boldsymbol{\zeta}}=\psi^{-1}(\boldsymbol{\zeta})\in \mathbb{G}_m^n$. which also has order $N$. Since $\Lambda_{\boldsymbol{\zeta}}=\Lambda_{\tilde{\boldsymbol{\zeta}}}$ we have $\text{rk } \Lambda_{\tilde{\boldsymbol{\zeta}}}/\Lambda_{\tilde{\boldsymbol{\zeta}}}(\nu)= r$. By Proposition 5.1 in \cite{Atoral} there exists  $V\in \text{GL}_n(\mathbb{Z})$ and a decomposition $\boldsymbol{\tilde\zeta=\tilde\eta \tilde\xi}$ with $\boldsymbol{\tilde\eta, \tilde\xi}$ in $\mathbb{G}_m^n$ of order $E$ and $M$, respectively such that the following holds
\begin{enumerate}\item $E\mid N, M\mid N$ and $E\leq N^{2\nu^{1+r}}$. In particular, $\mathbb{Q}(\tilde{\boldsymbol{\zeta}})=\mathbb{Q}(\boldsymbol{\tilde\xi,\tilde\eta})$ holds.
\item We have $|V|\ll_d N^{2v^{1+r}}$ with $\boldsymbol{\tilde\xi}^V=(1,\dots, 1,\boldsymbol{\tilde\xi'})$ and $\boldsymbol{\tilde\xi'}\in \mathbb{G}_m^r$.
\item If $G<\Gamma_M$ there exist $\mathbf{a}\in \mathbb{Z}^r$ and $\sigma\in G$ such that $\boldsymbol{\tilde\xi'} = \mathbf e(\mathbf{a}\sigma/M)$, $$|\mathbf{a}|<M \text{ and } \frac{|\mathbf{a}|}{M} \ll_d \frac{[(\mathbb{Z}/M\mathbb{Z})^*:G]f_G^{1/2}}{N^{\nu^r/(6n)}}.$$
\item We have $\delta(\boldsymbol{\tilde\xi})\geq n^{-1/2}\tilde{\lambda}(\boldsymbol{\tilde\zeta};\nu)$.\end{enumerate}
Let $\boldsymbol{\xi}=\psi(\boldsymbol{\tilde\xi})$ and $\boldsymbol{\eta}=\psi(\boldsymbol{\tilde\eta})$. Then $\boldsymbol{\zeta=\eta \xi}$ and the orders of $\boldsymbol{\eta, \xi}$ are $E$ and $M$, respectively since the order does not change under an isomorphism and neither do $\delta$ and $\tilde{\lambda}$. Now 1,2 and 4 follow directly after applying $\psi$. If we apply $\psi$ to 3. we find $\boldsymbol{\xi'}=\psi(\mathbf e(\mathbf{a}\sigma/M))=\psi(\mathbf e(\sigma/M))^{\mathbf{a}}$ where $\psi(\mathbf e(\sigma/M))\in\mathbb C_p$ is a root of unity of order $M$.\end{proof}

\begin{definition} For $\mathbf{u}\in\mathbb{Z}^n$ we define $$\rho(\mathbf{u})=\inf\{|\mathbf{v}|:\mathbf{v}\in\mathbb{Z}^n\setminus\{{0}\} \text{ and } \left<\mathbf{u},\mathbf{v}\right>=0\}.$$ We use the convention $\inf \emptyset = \infty$.\end{definition}

\begin{lemma}\label{lem:pend62}Let $p\in\mathbb{P}, \boldsymbol{\zeta}\in \mathbb{G}_m^n(\mathbb{C}_p)$ be of finite order $N$. Let $G<\mathrm{Gal}\left(\mathbb{Q}(\boldsymbol{\zeta})/\mathbb{Q}\right)$. Let $F$ in $\mathbb{C}_p[X_1,\dots,X_n]$ be a polynomial such that $F(\sigma(\boldsymbol{\zeta}))\neq 0$ for all $\sigma\in G$. Then the following holds:
If $n=1$ we have $$\frac{1}{\#G}\sum_{\sigma\in G} \log |F(\sigma(\boldsymbol{\zeta}))|_p = \log |F|_p + O_\epsilon\left(\frac{\deg(F)[\mathrm{Gal}(\mathbb{Q}(\boldsymbol{\zeta})/\mathbb{Q}): G]^2f_G^{1/2}\left(1+c(F)\right)}{N^{1-\epsilon}}\right).$$
If $n\geq 2,\; \tilde{\lambda}(\boldsymbol{\zeta};\nu)>n^{1/2}\deg(F),\; 0<\nu\leq 1/(126n^2)$ and $r=n-\mathrm{rk} (\Lambda_{\boldsymbol{\zeta}}(\nu))\geq 1$ we have $$\frac{1}{\#G}\sum_{\sigma\in G} \log |F(\sigma(\boldsymbol{\zeta}))|_p = \log |F|_p + O_{n,\nu}\left(\frac{\deg(F)[\mathrm{Gal}(\mathbb{Q}(\boldsymbol{\zeta})/\mathbb{Q}):G]^2f_G^{1/2}\left(1+c(F)\right)}{N^{\nu^r/(12n)}}\right).$$\end{lemma}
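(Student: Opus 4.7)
For $n=1$, I would apply Lemma \ref{lem:univariate} directly with $M=G$. If $k$ is the $p$-adic valuation of $N$ then $k\log p\le\log N$, and combining this with the identity $\#G=\varphi(N)/[\mathrm{Gal}(\mathbb{Q}(\zeta)/\mathbb{Q}):G]$ and the standard estimate $\varphi(N)\gg_\epsilon N^{1-\epsilon}$ converts the bound of Lemma \ref{lem:univariate} into one of the form $\ll_\epsilon \deg(F)(1+c(F))[\mathrm{Gal}:G]\,N^{-1+O(\epsilon)}$. Since $f_G\ge 1$, we have $[\mathrm{Gal}:G]\le[\mathrm{Gal}:G]^2 f_G^{1/2}$, so the claimed bound follows after renaming $\epsilon$.

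For $n\ge 2$ my plan follows the reduction strategy of \cite{Atoral}. I first apply Lemma \ref{lem:decAtoral} with parameter $\nu$, using a suitable subgroup of $\Gamma_M$ derived from $G$, to obtain $V\in\mathrm{GL}_n(\mathbb Z)$ and a factorisation $\boldsymbol\zeta=\boldsymbol\eta\boldsymbol\xi$ with $\boldsymbol\xi^V=(1,\dots,1,\boldsymbol\xi')$ and $\boldsymbol\xi'=\omega_M^{\mathbf a^\top}$ for some $\mathbf a\in\mathbb Z^r$ with $|\mathbf a|/M\ll_n N^{-\nu^r/(6n)}$. Inverting $V$ via Lemma \ref{lem:absVinv} expresses each coordinate of $\boldsymbol\xi$ as $\omega_M^{b_i}$ with $|\mathbf b|\ll_n N^{2(n-1)\nu^{1+r}}|\mathbf a|$. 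Then I apply Lemma \ref{lem:splitSum} with $\star=\star_1$ and $L$ the fixed field of $G$ to rewrite the target average as $\frac{1}{[L(\boldsymbol\eta):L]}\sum_\tau\frac{1}{\#H}\sum_{\sigma\in\tilde\tau H}\log|F(\tau(\boldsymbol\eta)\sigma(\boldsymbol\xi))|_p$.

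For each $\tau\in\mathrm{Gal}(L(\boldsymbol\eta)/L)$ I define the univariate polynomial $Q_\tau(X)=F(\tau(\boldsymbol\eta_1)X^{b_1},\dots,\tau(\boldsymbol\eta_n)X^{b_n})\in\mathbb C_p[X^{\pm 1}]$, so that $F(\tau(\boldsymbol\eta)\sigma(\boldsymbol\xi))=Q_\tau(\sigma(\omega_M))$. Using Theorem \ref{thm:lang} and Lemma \ref{lem:conductor}, the coset $\tilde\tau H$ restricts to a coset in $\Gamma_M$ of a subgroup $G_\tau$ satisfying $[\Gamma_M:G_\tau]\le[\mathrm{Gal}:G]$ and $f_{G_\tau}\mid f_G$. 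Applying the $n=1$ case of this lemma to $Q_\tau$ at $\omega_M$ bounds the inner average by $\ll_\epsilon\deg(Q_\tau)(1+c(Q_\tau))[\mathrm{Gal}:G]^2 f_G^{1/2}M^{-1+\epsilon}$. Since $M\ge N/E\ge N^{1-2\nu^{1+r}}$ and $\deg(Q_\tau)\le|\mathbf b|\deg(F)$, the quotient $\deg(Q_\tau)/M$ becomes $\ll_n N^{-\nu^r/(12n)}\deg(F)$ once the assumption $\nu\le 1/(126n^2)$ is used to balance the exponents.

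The main obstacle is showing that $|Q_\tau|_p=|F|_p$ and $c(Q_\tau)\ll c(F)$ for every $\tau$, so that the outer Galois average of $\log|Q_\tau|_p$ collapses to $\log|F|_p$. This is precisely where the new $p$-adic Gauss-norm lemma previewed in the overview enters: since $\delta(\boldsymbol\xi)\ge n^{-1/2}\tilde\lambda(\boldsymbol\zeta;\nu)>\deg(F)$ and at most $\deg(F)$ roots of unity of order coprime to $p$ can strictly reduce the Gauss norm of a univariate polynomial, the torsion substitution used to build $Q_\tau$ preserves $|F|_p$; the bound on $c(Q_\tau)$ follows analogously via Theorem \ref{thm:TV} applied to torsion specialisations. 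Combining everything with $[L(\boldsymbol\eta):L]\le\varphi(E)\le E\le N^{2\nu^{1+r}}$ and reassembling the double sum produces the claimed power saving $N^{-\nu^r/(12n)}$ multiplied by $\deg(F)(1+c(F))[\mathrm{Gal}:G]^2 f_G^{1/2}$.
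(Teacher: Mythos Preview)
Your overall strategy matches the paper's: apply Lemma \ref{lem:decAtoral}, use Lemma \ref{lem:splitSum} to split the average, and reduce the inner sum to the univariate estimate. The $n=1$ case is handled exactly as in the paper. For $n\ge 2$, however, two steps need correction.

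The main gap is your justification that $|Q_\tau|_p=|F|_p$. You invoke the ``at most $\deg(F)$ roots of unity of order coprime to $p$'' statement (Lemma \ref{lem:largePrimesOk}), but that lemma plays no role here; it enters only later, in Theorem \ref{thm:padic} via Lemma \ref{lem:specialComb}. In the present lemma the argument is purely combinatorial. Writing $\boldsymbol\xi=\omega_M^{\mathbf u^\top}$ with $\mathbf u^\top=(\mathbf 0^\top,\mathbf a^\top)V^{-1}$, two monomials $\mathbf X^{\boldsymbol\alpha}$ and $\mathbf X^{\boldsymbol\beta}$ of $F$ collapse to the same monomial of $Q_\tau$ only if $\langle\boldsymbol\alpha-\boldsymbol\beta,\mathbf u\rangle=0$, i.e.\ $\rho(\mathbf u)\le|\boldsymbol\alpha-\boldsymbol\beta|\le\deg(F)$. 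But any $\mathbf w$ with $\langle\mathbf u,\mathbf w\rangle=0$ satisfies $\boldsymbol\xi^{\mathbf w}=1$, so $\delta(\boldsymbol\xi)\le\rho(\mathbf u)$; combining Lemma \ref{lem:decAtoral}(4) with the hypothesis $\tilde\lambda(\boldsymbol\zeta;\nu)>n^{1/2}\deg(F)$ gives $\rho(\mathbf u)>\deg(F)$. Hence no monomials collide, and the coefficients of $Q_\tau$ are exactly those of $F$ multiplied by roots of unity, so $|Q_\tau|_p=|F|_p$ on the nose. The bound $c(Q_\tau)\le c(F)$ then follows because every nonzero value $Q_\tau(\mu)$ at a root of unity equals $|F(\tau(\boldsymbol\eta)\mu^{\mathbf u^\top})|_p$ at a torsion point, whence $\inf(Q_\tau)\ge\inf(F)$; no separate appeal to Theorem \ref{thm:TV} is needed beyond what is already built into the definition of $c(F)$.

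Second, your bounds $[\Gamma_M:G_\tau]\le[\mathrm{Gal}(\mathbb Q(\boldsymbol\zeta)/\mathbb Q):G]$ and $f_{G_\tau}\mid f_G$ are too optimistic. The fixed field of $H$ is $\mathbb Q(\boldsymbol\xi)\cap L(\boldsymbol\eta)$, not $\mathbb Q(\boldsymbol\xi)\cap L$, so both the index and the conductor pick up an extra factor of $E\le N^{2\nu^{1+r}}$: one only gets $[\mathrm{Gal}(\mathbb Q(\boldsymbol\xi)/\mathbb Q):H]\le[\mathrm{Gal}(\mathbb Q(\boldsymbol\zeta)/\mathbb Q):G]\cdot E$ and $f_H\le f_G\cdot E$. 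These extra powers of $N$ are harmless once folded into the exponent bookkeeping (this is exactly where the constraint $\nu\le 1/(126n^2)$ is spent), but they must be tracked for the final saving $N^{-\nu^r/(12n)}$ to come out correctly.
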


\begin{proof}The case $n=1$ follows directly from Lemma \ref{lem:univariate} since $\log(N)\ll_\epsilon N^\epsilon$, $\varphi(N)\gg_\epsilon N^{1-\epsilon}$ and $1/\#G = [\mathrm{Gal}(\mathbb{Q}(\boldsymbol{\zeta})/\mathbb{Q}):G]/\#\mathrm{Gal}(\mathbb{Q}(\boldsymbol{\zeta})/\mathbb{Q}) = [\mathrm{Gal}(\mathbb{Q}(\boldsymbol{\zeta})/\mathbb{Q}):G]/\varphi(N)$.\\
If $n\geq 2$ we follow the strategy in \cite{Atoral} to reduce it to the univariate case. This idea was inspired by a work of Duke \cite{Duke}. Let us denote by $L$ the fixed field of $G$ in $\mathbb{Q}(\boldsymbol{\zeta})$. We apply Lemma \ref{lem:decAtoral} to factor $\boldsymbol{\zeta}$. Hence there are $V\in \text{GL}_n(\mathbb{Z})$ and $\boldsymbol{\eta,\xi}\in \mathbb{G}_m^n$ of order $E$ and $M$ respectively such that $\boldsymbol{\zeta=\eta\xi}$, $E\leq N^{2\nu^{1+r}}$ and therefore $M\geq N/E\geq N^{1-2\nu^{1+r}}$. Furthermore $|V|\ll_n N^{2\nu^{1+r}}$ and $\boldsymbol{\xi}^V=(1,\dots, 1,\boldsymbol{\xi'})$ for some $\boldsymbol{\xi'}\in \mathbb{G}_m^r$. Let $H=\mathrm{Gal}(\mathbb{Q}(\boldsymbol{\xi})/\mathbb{Q}(\boldsymbol{\xi})\cap L(\boldsymbol{\eta}))$, it is the image under the restriction map $\mathrm{Gal}(L(\boldsymbol{\xi})/L) \to \mathrm{Gal}(\mathbb{Q}(\boldsymbol{\xi})/\mathbb{Q}(\boldsymbol{\xi})\cap L)$ of $\mathrm{Gal}(L(\boldsymbol{\xi})/L(\boldsymbol{\xi})\cap L(\boldsymbol{\eta}))$. Since this restriction map is an isomorphism by Theorem \ref{thm:lang}, $H$ is isomorphic to $\mathrm{Gal}(L(\boldsymbol{\xi})/L(\boldsymbol{\xi})\cap L(\boldsymbol{\eta}))$ and we can identify them. Since $F(\sigma(\boldsymbol{\zeta}))\neq 0$ for all $\sigma\in G$, the function $\log|F(\cdot)|_p$ is well defined on the orbit of $\boldsymbol{\zeta}$ under $G$. Applying Lemma \ref{lem:splitSum} to $G$, the factorisation $\boldsymbol{\zeta=\eta\xi}$ and this map we get $$\frac{1}{\#G}\sum_{\sigma\in G} \log|F(\sigma(\boldsymbol{\zeta}))|_p = \frac{1}{[L(\boldsymbol{\eta}):L]}\sum_{\tau\in \mathrm{Gal}(L(\boldsymbol{\eta})/L)} \frac{1}{\#H} \sum_{\sigma\in\tilde{\tau}H} \log|F(\tau(\boldsymbol{\eta})\sigma(\boldsymbol{\xi}))|_p,$$ where $\tilde{\tau}\in\mathrm{Gal}(L(\boldsymbol{\xi})/L)$ is such that $\tilde{\tau}\restriction_{L(\boldsymbol{\xi})\cap L(\boldsymbol{\eta})} = \tau\restriction_{L(\boldsymbol{\xi})\cap L(\boldsymbol{\eta})}$. For $\tau\in \mathrm{Gal}(L(\boldsymbol{\eta})/L)$ let $S_\tau$ denote the inner sum $$S_\tau= \frac{1}{\#H}\sum_{\sigma\in \tilde{\tau}H}\log |F(\tau(\boldsymbol{\eta})\sigma(\boldsymbol{\xi}))|_p= \frac{1}{\#H}\sum_{\sigma\in H}\log |F(\tau(\boldsymbol{\eta})\tilde{\tau}\sigma(\boldsymbol{\xi}))|_p.$$ So we can write \begin{equation}\label{eq:decStau}\frac{1}{\#G}\sum_{\sigma \in G} \log |F(\sigma(\boldsymbol{\zeta}))|_p=\frac{1}{[L(\boldsymbol{\eta}):L]}\sum_{\tau \in \mathrm{Gal}(L(\boldsymbol{\eta})/L)} S_\tau.
\end{equation}
Applying the third statement of Lemma \ref{lem:decAtoral} to $H$ gives us $\mathbf{a}\in\mathbb{Z}^r$ such that $$|\mathbf{a}|<M \text{ and } \frac{|\mathbf{a}|}{M}\ll_n \frac{[\mathrm{Gal}(\mathbb{Q}(\boldsymbol{\xi})/\mathbb{Q}): H]f_H^{1/2}}{N^{\nu^r/(6n)}},$$ and $\omega_M\in\mathbb{C}_p$ such that $\boldsymbol{\xi'} = \omega_M^{\mathbf{a}^\top}$. Let $\mathbf{0}\in \mathbb Z^{n-r}$ and define $\mathbf{u}^\top=\mathbf{\left(0^\top,a^\top\right)} V^{-1}\in \mathbb{Z}^n$. Then we have $$\boldsymbol{\xi}=(\boldsymbol{\xi}^V)^{V^{-1}}=(\mathbf{1,\dots,1},\boldsymbol{\xi'})^{V^{-1}}= (\omega_M^{\mathbf{0}^\top},\omega_M^{\mathbf{a}^\top})^{V^{-1}} = \omega_M^{\mathbf{u}^\top}.$$ For all $\tau\in \text{Gal}(L(\boldsymbol{\eta})/L)$ let $Q_\tau=F\left(\tau(\boldsymbol{\eta})X^{\mathbf{u}^\top}\right)X^{v_\tau}\in \mathbb C_p[X]$, where we choose $v_\tau\in\mathbb{Z}$ in such a way that $Q_\tau$ is a polynomial and $Q_\tau(0)\neq 0$. This is possible because  we have $|Q_\tau(\tilde{\tau}(\omega_M))|_p=\left|F\left(\tau(\boldsymbol{\eta})\tilde{\tau}(\omega_M)^{\mathbf{u}^\top}\right)\right|_p = |F(\tau(\boldsymbol{\eta})\tilde{\tau}(\boldsymbol{\xi}))|_p\neq 0$ since $\tau(\boldsymbol{\eta})\tilde{\tau}(\boldsymbol{\xi})=\sigma(\boldsymbol{\zeta})$ for some $\sigma\in G$. Since $\mathrm{Gal}(L(\boldsymbol{\xi})/L)$ is an abelian group we can write $$S_\tau=\frac{1}{\#H}\sum_{\sigma\in H} \log |Q_\tau(\sigma(\tilde{\tau}(\omega_M)))|_p.$$

We want to apply the univariate result to $Q_\tau$ and the root of unity $\tilde{\tau}(\omega_M)$ of order $M$. Therefore we bound a couple of quantities which will arise:
The fixed field of $H$ is contained in $L(\boldsymbol{\eta})$, therefore $[\mathrm{Gal}(\mathbb{Q}(\boldsymbol{\xi})/\mathbb{Q}):H]\leq [L(\boldsymbol{\eta}):\mathbb{Q}]\leq [L:\mathbb{Q}]E$ holds. Since $L$ is the fixed field of $G$, we find $[L:\mathbb{Q}]=[\mathrm{Gal}(\mathbb{Q}(\boldsymbol{\zeta})/\mathbb{Q}):G]$, so $$[\mathrm{Gal}(\mathbb{Q}(\boldsymbol{\xi})/\mathbb{Q}):H]\leq [\mathrm{Gal}(\mathbb{Q}(\boldsymbol{\zeta})/\mathbb{Q}):G]N^{2\nu^{1+r}}.$$ Next we want to bound the conductor of $H$. Therefore we want to find a field generated by a root of unity which contains $\mathbb{Q}(\boldsymbol{\xi})\cap L(\boldsymbol{\eta})$ its fixed field. By Lemma \ref{lem:conductor}, $L$ is contained in $\mathbb{Q}(\omega_{f_G})$. So $\mathbb{Q}(\boldsymbol{\xi})\cap L(\boldsymbol{\eta})\subseteq \mathbb{Q}(\boldsymbol{\xi})\cap \mathbb{Q}(\omega_{f_G},\boldsymbol{\eta})$. The last intersection is generated by a root of unity of order $\gcd(M,\text{lcm}(f_G,E)).$ So we have $$f_H\leq \text{lcm}(f_G,E)\leq f_G E \leq f_G N^{2\nu^{1+r}}.$$
Now let us bound the degree of $Q_\tau$. A monomial $X_1^{\alpha_1}\cdots X_n^{\alpha_n}$ in $F$ becomes up to a factor $X^{\left<\boldsymbol{\alpha},\mathbf{u}\right>+v_\tau}$ in $Q_\tau$. We have $|\left<\boldsymbol{\alpha},\mathbf{u}\right>|\leq \sum_{k=1}^n \alpha_k |u_k|\leq \deg(F)|\mathbf{u}|$, where $\mathbf{u}=(u_1,\dots, u_n)^\top$. Since $v_\tau$ is chosen in such a way that $Q_\tau$ is a polynomial and $Q_\tau (0)\neq 0$, there exists $\boldsymbol{\alpha}$ in $\mathrm{Supp}(F)$ such that $v_\tau=-\left<\boldsymbol{\alpha},\mathbf{u}\right>$. Therefore $v_\tau$ is also bounded by $\deg(F)|\mathbf{u}|$ and hence we find ${\deg(Q_\tau)\leq 2\deg(F)|\mathbf{u}|}$. Using Lemma \ref{lem:absVinv} and $M\leq N$ we have $$|\mathbf{u}|\ll_n|\mathbf{a}||V^{-1}|\ll_n \frac{|\mathbf{a}|}{M} N |V|^{n-1}\ll_n {[\mathrm{Gal}(\mathbb{Q}(\boldsymbol{\xi})/\mathbb{Q}):H]f_H^{1/2}}{N^{1+2(n-1)\nu^{r+1}-\nu^r/(6n)}}.$$ Putting all this bounds together we find \begin{align*}\deg(Q_\tau)&\ll \deg(F)|\mathbf{u}|\ll_n \deg(F)[\mathrm{Gal}(\mathbb{Q}(\boldsymbol{\xi})/\mathbb{Q}):H]f_H^{1/2}N^{1+2(n-1)\nu^{r+1}-\nu^r/(6n)}\\
&\ll_n \deg(F)[\mathrm{Gal}(\mathbb{Q}(\boldsymbol{\zeta})/\mathbb{Q}):G]f_G^{1/2}N^{1+2\nu^{r+1}+\nu^{r+1}+2(n-1)\nu^{r+1}-\nu^r/(6n)}.\end{align*}
The exponent of $N$ equals $1+(2n+1)\nu^{r+1}-\nu^r/(6n).$
We also have $$1/\#H=[\mathrm{Gal}(\mathbb{Q}(\boldsymbol{\xi})/\mathbb{Q}):H]/\varphi(M)\leq [\mathrm{Gal}(\mathbb{Q}(\boldsymbol{\zeta})/\mathbb{Q}):G]N^{2\nu^{r+1}}/\varphi(M).$$ We are now prepared to apply Lemma \ref{lem:univariate} to $Q_\tau$ and $\tilde{\tau}(\omega_M)$: we have \begin{align*}S_\tau&=\log|Q_\tau|_p + O\left(\frac{\deg(Q_\tau)}{\#H} \left(c(Q_{\tau})+\log(M)\right)\right)\\
&=\log|Q_\tau|_p + O_n\left(\frac{\deg(F)[\mathrm{Gal}(\mathbb{Q}(\boldsymbol{\zeta})/\mathbb{Q}):G]^2f_G^{1/2}N^{1+(2n+3)\nu^{r+1}-\nu^r/(6n)}}{\varphi(M)}  \left(c(Q_{\tau})+\log(N)\right)\right).\end{align*}
Let us bound $\varphi(M)\gg_{n,\nu} M^{1-\nu^r/(72n)}\geq N^{1-2\nu^{r+1}-\nu^r/(72n)}$ and $\log(N)\ll_{n,\nu} N^{\nu^r/(72n)}$. Then the exponent of $N$ is $1+(2n+3)\nu^{r+1}-\nu^r/(6n)+\nu^r/(72n)-(1-2\nu^{r+1}-\nu^r/(72n))$. Using $\nu\leq 1/(126n^2)$ we can bound the exponent from above by $\nu^r/n(7/126+1/36-1/6)=-\nu^r/(12n)$. Thus we get
$$S_\tau = \log|Q_\tau|_p + O_{n,\nu}\left(\frac{\deg(F)[\mathrm{Gal}(\mathbb{Q}(\boldsymbol{\zeta})/\mathbb{Q}):G]^2f_G^{1/2}\left(1+c(Q_{\tau})\right)}{N^{\nu^r/(12n)}}\right).$$

Lastly we want to understand $|Q_\tau|_p$ and ${c}(Q_\tau)$. We claim that $|Q_\tau|_p=|F|_p$ and $c(Q_\tau)\leq c(F)$. We prove the first claim by verifying that the coefficients of $Q_\tau$ are up to roots of unity the coefficients of $F$. Let $X_1^{\alpha_1}\dots X_n^{\alpha_n}$ and $X_1^{\beta_1}\dots X_n^{\beta_n}$ be monomials appearing in $F$ and suppose that $\boldsymbol{\alpha}\neq\boldsymbol{\beta}$. Then the corresponding monomials in $Q_\tau$ are $X^{\left<\boldsymbol{\alpha}, \mathbf{u}\right>+v_\tau}$ and $X^{\left<\boldsymbol{\beta}, \mathbf{u}\right>+v_\tau}$. If this monomials are the same we have $\left<\boldsymbol{\alpha-\beta}, \mathbf{u}\right>=0$ and since $|\boldsymbol{\alpha-\beta}|\leq \deg(F)$ we have $\rho(\mathbf{u})\leq\deg(F)$. This shows that if $\rho(\mathbf{u})>\deg(F)$, the coefficients of $Q_\tau$ are exactly those of $F$ up to a root of unity. Let $\mathbf{w}\in \mathbb{Z}^n\setminus\{0\}$ such that $\left<\mathbf{u,w}\right>=0$ and $|\mathbf{w}|=\rho(\mathbf{u})$. Using that $\boldsymbol{\xi}=\omega_M^{\mathbf{u} \top}$ we get $\boldsymbol{\xi}^{\mathbf{w}}=1$ and hence $\delta(\boldsymbol{\xi})\leq |\mathbf{w}|=\rho(\mathbf{u})$. With the bound from Lemma \ref{lem:decAtoral} we have $\rho(\mathbf{u})\geq n^{-1/2}\tilde{\lambda}(\boldsymbol{\zeta};\nu)$ which together with the assumption implies $\rho(\mathbf{u})>\deg(F)$. So we have $|Q_\tau|_p=|F|_p$.\\
Now we show the second claim. Let $\mu\in\mathbb{C}_p$ be a root of unity such that $Q_\tau(\mu)\neq 0$. Then $0\neq|Q_\tau(\mu)|_p=\left|F\left(\tau(\boldsymbol{\eta})\mu^{\mathbf{u}^\top}\right)\mu^{v_\tau}\right|_p=\left|F\left(\tau(\boldsymbol{\eta})\mu^{\mathbf u^\top}\right)\right|_p$ holds and $\tau(\boldsymbol{\eta})\mu^{\mathbf{u}^\top}\in\mu_\infty^n$. Therefore we have $\inf(F)\leq \inf(Q_\tau)$ using the notation of Lemma \ref{lem:inf}. This implies $c(Q_\tau)\leq c(F)$ since $|Q_\tau|_p=|F|_p$. 

Thus we have
$$S_\tau = \log|F|_p + O_{n,\nu}\left(\frac{\deg(F)[\mathrm{Gal}(\mathbb{Q}(\boldsymbol{\zeta})/\mathbb{Q}):G]^2f_G^{1/2}\left(1+c(F)\right)}{N^{\nu^r/(12n)}}\right).$$
Now we can take the average over $\tau$ and use \eqref{eq:decStau} to find $$\frac{1}{\#G}\sum_{\sigma\in G} \log|F(\sigma(\zeta))|_p= \log|F|_p + O_{n,\nu}\left(\frac{\deg(F)[\mathrm{Gal}(\mathbb{Q}(\boldsymbol{\zeta})/\mathbb{Q}):G]^2f_G^{1/2}\left(1+c(F)\right)}{N^{\nu^r/(12n)}}\right).\qedhere$$
\end{proof}

\begin{lemma}\label{lem:atoral87}Suppose $\boldsymbol{\zeta}\in \mathbb{G}_m^n$ has order $N$ and let $\delta\geq 1,\; \epsilon\in (0,1/2],\; \nu_1,\dots, \nu_{n-1}\in (0,1/2]$ with $\nu_1+\dots+\nu_{n-1}\leq 1/2$. Then there exist $l\in \{0,\dots, n-1\}$ and $V\in\mathrm{GL}_n(\mathbb{Z})$ such that the following hold
\begin{enumerate}
\item We have $|V|\ll_n \delta^{2\epsilon^{n-l}}$ and $V$ is the unit matrix if $l=0$.
\item \label{it:at872}We have $\boldsymbol{\zeta}^V=(\boldsymbol{\eta,\xi})$ where $\boldsymbol{\eta}\in \mathbb{G}_m^l, \boldsymbol{\xi}\in\mathbb{G}_m^{n-l}, \mathrm{ord}(\boldsymbol{\eta})\leq N^{\nu_1+\dots+\nu_l}$ and either $l=n-1$ and $\xi\in\mathbb{G}_m$ has order at least $N^{1/2}$ or $l\leq n-2$ and $\lambda_1(\tilde{\Lambda}_{\boldsymbol{\xi}}(\nu_{l+1}))> \delta^{\epsilon^{n-l-1}}$.\end{enumerate}\end{lemma}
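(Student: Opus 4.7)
The plan is to build $V$ by iterating Lemma \ref{lem:decAtoral}, at each stage either verifying the stopping criterion of item 2 or applying the lemma to peel off another batch of torsion coordinates. I start with $l_0=0$, $V^{(0)}=I_n$, $\boldsymbol{\eta}^{(0)}$ empty, and $\boldsymbol{\xi}^{(0)}=\boldsymbol{\zeta}\in\mathbb{G}_m^n$. At the generic step, given $\boldsymbol{\zeta}^{V^{(k)}}=(\boldsymbol{\eta}^{(k)},\boldsymbol{\xi}^{(k)})$ with $\boldsymbol{\xi}^{(k)}\in\mathbb{G}_m^{n-l_k}$, I first test: if $l_k=n-1$ and the single coordinate of $\boldsymbol{\xi}^{(k)}$ has order at least $N^{1/2}$, I stop; if $l_k\leq n-2$ and $\lambda_1(\tilde{\Lambda}_{\boldsymbol{\xi}^{(k)}}(\nu_{l_k+1}))>\delta^{\epsilon^{n-l_k-1}}$, I stop. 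Otherwise I invoke Lemma \ref{lem:decAtoral} on $\boldsymbol{\xi}^{(k)}$ with parameter $\nu_{l_k+1}$, obtaining $W\in\mathrm{GL}_{n-l_k}(\mathbb{Z})$ and a factorisation $\boldsymbol{\xi}^{(k)}=\boldsymbol{\eta}'\boldsymbol{\xi}'$ with $(\boldsymbol{\xi}')^W=(1,\dots,1,\boldsymbol{\xi}'')$, $\boldsymbol{\xi}''\in\mathbb{G}_m^r$, $r\geq 1$. I then set $V^{(k+1)}=(I_{l_k}\oplus W)V^{(k)}$, take the first $l_{k+1}:=n-r$ coordinates of $\boldsymbol{\zeta}^{V^{(k+1)}}$ as the new $\boldsymbol{\eta}^{(k+1)}$, the last $r$ as the new $\boldsymbol{\xi}^{(k+1)}$, and repeat.

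Termination and the easy bookkeeping work as follows. Each iteration strictly increases $l_k$: the degenerate case $r=n-l_k$ cannot occur while iterating, since it would force $\tilde{\Lambda}_{\boldsymbol{\xi}^{(k)}}(\nu_{l_k+1})=\{0\}$ and hence $\lambda_1=\infty$, triggering the stopping rule. Thus the process halts within at most $n-1$ steps at some $l=l_K\in\{1,\dots,n-1\}$, or immediately at $l=0$ if the initial test succeeds. If $l_K\leq n-2$, item 2's second alternative holds by construction. If $l_K=n-1$, I verify the first alternative via the inductively maintained order estimate $\mathrm{ord}(\boldsymbol{\eta}^{(K)})\leq N^{\nu_1+\cdots+\nu_{n-1}}\leq N^{1/2}$: combining with $\mathrm{ord}(\boldsymbol{\zeta}^{V^{(K)}})=\mathrm{lcm}(\mathrm{ord}(\boldsymbol{\eta}^{(K)}),\mathrm{ord}(\xi))=N$ forces $\mathrm{ord}(\xi)\geq N^{1/2}$. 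The order estimate propagates across iterations because Lemma \ref{lem:decAtoral} yields $\mathrm{ord}(\boldsymbol{\eta}')\leq N_k^{2\nu_{l_k+1}^{1+r}}\leq N^{\nu_{l_k+1}}$ (using $2\nu^{1+r}\leq\nu$ for $\nu\leq 1/2$, $r\geq 1$), so $\mathrm{ord}(\boldsymbol{\eta}^{(k+1)})\leq\mathrm{lcm}(\mathrm{ord}(\boldsymbol{\eta}^{(k)}),\mathrm{ord}(\boldsymbol{\eta}'))\leq N^{\nu_1+\cdots+\nu_{l_{k+1}}}$.

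The main obstacle will be the norm bound $|V|\ll_n\delta^{2\epsilon^{n-l}}$. The final $V$ is the product of block-diagonal extensions of the $W_{k+1}$, and Lemma \ref{lem:decAtoral} only controls each $|W_{k+1}|$ in terms of $N_k^{2\nu_{l_k+1}^{1+r}}$, where $N_k$ is a priori only bounded by $N$ and not by $\delta$. The essential additional input is that whenever the iteration proceeds, the stopping condition fails, so $\lambda_1(\tilde{\Lambda}_{\boldsymbol{\xi}^{(k)}}(\nu_{l_k+1}))\leq\delta^{\epsilon^{n-l_k-1}}$; tracing through the construction of Proposition 5.1 in \cite{Atoral} which underlies Lemma \ref{lem:decAtoral}, the matrix $W_{k+1}$ can be extracted from a Minkowski-reduced basis of $\tilde{\Lambda}_{\boldsymbol{\xi}^{(k)}}(\nu_{l_k+1})$, so the short-vector bound translates into an estimate $|W_{k+1}|\ll_n \delta^{c_n\epsilon^{n-l_k-1}}$ for some constant $c_n$. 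Multiplying the at most $n-1$ such bounds and observing $\epsilon^{n-l_k-1}\leq\epsilon^{n-l_K}$ at every iterating step (since $l_k\leq l_K-1$) delivers the claimed $|V|\ll_n\delta^{2\epsilon^{n-l_K}}$ after absorbing the cumulative constant into $\ll_n$.
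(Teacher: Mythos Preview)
The paper's own proof is a one-line transfer: since every quantity in the statement (orders, $\delta$, the lattices $\Lambda_{\boldsymbol\zeta}$, $\tilde\Lambda_{\boldsymbol\zeta}(\nu)$, $\lambda_1$) is preserved under the group isomorphism between the $N$-th roots of unity in $\mathbb C$ and in $\mathbb C_p$, the lemma follows immediately from Lemma~8.7 in \cite{Atoral}. This is exactly the same device already used for Lemma~\ref{lem:decAtoral}.

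Your approach is different: you attempt to reprove the statement from scratch by iterating Lemma~\ref{lem:decAtoral}. This is in spirit how Lemma~8.7 of \cite{Atoral} is proved, so the strategy is reasonable, but your sketch has a genuine gap in the norm bound for $V$. You argue that each $|W_{k+1}|\ll_n \delta^{c_n\epsilon^{n-l_k-1}}$ and then write that multiplying these bounds ``delivers the claimed $|V|\ll_n\delta^{2\epsilon^{n-l_K}}$ after absorbing the cumulative constant into $\ll_n$''. But the cumulative constant sits in the \emph{exponent} of $\delta$, not in front of it: the product gives $|V|\ll_n \delta^{c_n\sum_k \epsilon^{n-l_k-1}}$, and $c_n\sum_k \epsilon^{n-l_k-1}$ is at best $c_n(n-1)\epsilon^{n-l_K}$, not $2\epsilon^{n-l_K}$. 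An exponent constant depending on $n$ cannot be hidden in $\ll_n$. Separately, the bound $|W_{k+1}|\ll_n \delta^{c_n\epsilon^{n-l_k-1}}$ is not a consequence of Lemma~\ref{lem:decAtoral} as stated (that lemma only gives $|W|\ll_n N_k^{2\nu^{1+r}}$); you invoke it by ``tracing through the construction of Proposition~5.1 in \cite{Atoral}'', which is an appeal to an unopened black box rather than a proof. Minor issues: with the paper's exponent convention $(\mathbf x^A)^B=\mathbf x^{AB}$ your update should read $V^{(k+1)}=V^{(k)}(I_{l_k}\oplus W)$, and Lemma~\ref{lem:decAtoral} is stated only for $\nu\in(0,1/4]$ while you feed it $\nu_{l_k+1}\in(0,1/2]$.
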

\begin{proof}Since all quantities involved behave well under an isomorphism between $\mu_N$ and the $N$-th roots of $\mathbb{C}_p$, the lemma follows by Lemma 8.7 in \cite{Atoral}. \end{proof}

\begin{lemma}\label{lem:largePrimesOk}Let $F\in\mathbb{C}_p[X]\setminus\{0\}$. Then there are at most $\deg(F)$ roots of unity $\zeta$ of order prime to $p$ such that $|F(\zeta)|_p\neq |F|_p$.\end{lemma}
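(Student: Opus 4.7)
The plan is to reduce $F$ modulo the maximal ideal of the valuation ring $\mathcal{O}$ of $\mathbb{C}_p$ and count zeros in the residue field. First I would normalize by replacing $F$ with $F/a$, where $a$ is a coefficient achieving $|a|_p=|F|_p$; the set of roots of unity of interest does not change, while now $F\in \mathcal O[X]$ and some coefficient of $F$ is a unit. Hence the reduction $\overline F\in k[X]$ modulo the maximal ideal is a nonzero polynomial of degree at most $\deg(F)$, where $k$ denotes the residue field of $\mathcal O$, namely an algebraic closure of $\mathbb F_p$.

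Next I would observe that for any root of unity $\zeta$ of order prime to $p$ we have $|\zeta|_p=1$, so $\zeta\in\mathcal O^\times$ and its reduction $\overline\zeta$ lies in $k^\times$. Because $|F(\zeta)|_p\le |F|_p=1$ always holds by the ultrametric inequality, the condition $|F(\zeta)|_p\ne |F|_p$ is equivalent to $|F(\zeta)|_p<1$, which is in turn equivalent to $\overline F(\overline\zeta)=0$. Thus it suffices to bound the number of distinct $\overline\zeta$ arising, since $\overline F$ has at most $\deg(\overline F)\le\deg(F)$ zeros in $k$.

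The remaining point, and the only place where anything needs to be checked, is that the reduction map is injective on the set of roots of unity in $\mathbb{C}_p$ whose order is prime to $p$. Suppose $\zeta_1\ne \zeta_2$ are two such roots of unity. Then $\zeta_1/\zeta_2$ is a root of unity different from $1$ whose order divides the least common multiple of the orders of $\zeta_1$ and $\zeta_2$, hence is prime to $p$ and admits some prime divisor $\ell\ne p$. Lemma \ref{lem:notpPower} then gives $|\zeta_1/\zeta_2-1|_p=1$, so $|\zeta_1-\zeta_2|_p=|\zeta_2|_p\cdot|\zeta_1/\zeta_2-1|_p=1$, and consequently $\overline{\zeta_1}\ne\overline{\zeta_2}$ in $k$.

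Putting the three steps together, the map $\zeta\mapsto\overline\zeta$ is an injection from the set in question into the zero set of $\overline F$ in $k$, which has cardinality at most $\deg(F)$. I expect the only subtle step to be the injectivity of the reduction, which is handled cleanly by Lemma \ref{lem:notpPower}; the rest is a routine application of the Gauss norm and the ultrametric inequality.
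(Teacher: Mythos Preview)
Your proof is correct and follows essentially the same approach as the paper: normalize so that $|F|_p=1$, reduce modulo the maximal ideal of $\mathcal O$, observe that $|F(\zeta)|_p<1$ exactly when $\overline\zeta$ is a root of the nonzero reduction $\overline F$, and use Lemma~\ref{lem:notpPower} to deduce that reduction is injective on roots of unity of order prime to $p$. The paper's write-up is slightly terser but the ideas and the key lemma invoked are identical.
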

\begin{proof}Without loss of generality we can assume that $|F|_p=1$. Let $O=\{x\in\mathbb C_p: |x|_p\le 1\}$ be the ring of integers and $M=\{x\in O: |x|_p<1\}$ its maximal ideal. Note that $F\in O[X]$ and denote by $G$ its reduction modulo $M$. If $x\in O$ satisfies $|F(x)|_p<1$ its reduction is a root of $G$. Since $|F|_p=1$ we have $G\ne 0$ and hence it has at most $\deg(F)$ roots. The order of the quotient of two distinct roots of unity of order prime to $p$ is divisible by some prime $l\ne p$. Hence Lemma \ref{lem:notpPower} implies that the reduction modulo $M$ is injective on the roots of unity of order prime to $p$ and the lemma follows.
\end{proof}

\begin{lemma}\label{lem:specialComb}Let $F_1,\dots, F_m\in\mathbb{C}_p[X_1,\dots, X_n]$ be polynomials, $\mathbf{u}_1,\dots, \mathbf{u}_m\in \mathbb{Z}^r$ pairwise distinct, $\boldsymbol{\eta}\in\mu_\infty^n$ and $G<\mathrm{Gal}(\mathbb{Q}(\boldsymbol{\eta})/\mathbb{Q})$ such that for all $\tau\in G$ there is $1\leq i\leq m$ such that $F_{i}(\tau(\boldsymbol{\eta}))\neq 0$. Then there exists $\boldsymbol{\zeta}\in\mu_\infty^r$ such that $F=\boldsymbol{\zeta}^{\mathbf{u}_1}F_1+\dots +\boldsymbol{\zeta}^{\mathbf{u}_m}F_m$ satisfies $F(\tau(\boldsymbol{\eta}))\neq 0$ for all $\tau\in G$ and $|F|_p=\max\{|F_i|_p:1\leq i \leq m\}$.\end{lemma}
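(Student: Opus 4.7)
The plan is to reduce to a univariate problem and then apply Lemma \ref{lem:largePrimesOk}. After replacing each $F_i$ by $c^{-1}F_i$ for some $c\in\mathbb{C}_p^{*}$ with $|c|_p=\max_i|F_i|_p$, I may assume $\max_i|F_i|_p=1$; this is harmless since both sides of the desired equality $|F|_p=\max_i|F_i|_p$ rescale identically. Pick $i_0$ with $|F_{i_0}|_p=1$ and a multiindex $\boldsymbol{\alpha}$ with $|(F_{i_0})_{\boldsymbol{\alpha}}|_p=1$. Since $|\boldsymbol{\zeta}^{\mathbf{u}_i}|_p=1$ for every $\boldsymbol{\zeta}\in\mu_\infty^r$, the ultrametric inequality immediately gives $|F|_p\le 1$, so it suffices to produce $\boldsymbol{\zeta}$ such that the coefficient $g(\boldsymbol{\zeta}):=\sum_i\boldsymbol{\zeta}^{\mathbf{u}_i}(F_i)_{\boldsymbol{\alpha}}$ of the monomial $X_1^{\alpha_1}\cdots X_n^{\alpha_n}$ in $F$ has absolute value $1$, while simultaneously $F(\tau(\boldsymbol{\eta}))=\sum_i\boldsymbol{\zeta}^{\mathbf{u}_i}F_i(\tau(\boldsymbol{\eta}))\ne 0$ for every $\tau\in G$.

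To reduce to a single variable I would look for $\boldsymbol{\zeta}$ of the form $(\omega^{a_1},\dots,\omega^{a_r})$ with $\omega$ a root of unity of order coprime to $p$ and $\mathbf{a}=(a_1,\dots,a_r)\in\mathbb{Z}^r$. Since the $\mathbf{u}_i$ are pairwise distinct, finitely many linear conditions $\langle\mathbf{a},\mathbf{u}_i-\mathbf{u}_j\rangle\ne 0$ are satisfied by some $\mathbf{a}\in\mathbb{Z}^r$ (for instance $\mathbf{a}=(1,M,\dots,M^{r-1})$ with $M$ larger than $2\max_i|\mathbf{u}_i|$), so I may fix such $\mathbf{a}$ and obtain pairwise distinct integers $n_i:=\langle\mathbf{a},\mathbf{u}_i\rangle$. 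Setting $k=\min_i n_i$, define the univariate polynomials
$$\tilde g(X)=\sum_{i=1}^m X^{n_i-k}(F_i)_{\boldsymbol{\alpha}}, \qquad \tilde P_\tau(X)=\sum_{i=1}^m X^{n_i-k}F_i(\tau(\boldsymbol{\eta}))\in\mathbb{C}_p[X].$$
Because the exponents are distinct non-negative integers, $\tilde g$ has Gauss norm $\max_i|(F_i)_{\boldsymbol{\alpha}}|_p=1$ (and is in particular nonzero), while each $\tilde P_\tau$ is nonzero by the standing hypothesis that some $F_i(\tau(\boldsymbol{\eta}))\ne 0$.

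To finish, apply Lemma \ref{lem:largePrimesOk} to $\tilde g$: at most $\deg\tilde g$ roots of unity of order coprime to $p$ satisfy $|\tilde g(\omega)|_p<1$. Combined with the at most $\sum_{\tau\in G}\deg\tilde P_\tau$ zeros of the $\tilde P_\tau$, only finitely many $\omega$ of order coprime to $p$ are forbidden. Since the set of roots of unity of order coprime to $p$ is infinite (for any prime $\ell\ne p$ the groups $\mu_{\ell^k}$ provide arbitrarily many elements), I can pick $\omega$ avoiding every forbidden case. Then $\boldsymbol{\zeta}=(\omega^{a_1},\dots,\omega^{a_r})$ yields $\boldsymbol{\zeta}^{\mathbf{u}_i}=\omega^{n_i}$, hence $g(\boldsymbol{\zeta})=\omega^k\tilde g(\omega)$ has absolute value $1$ and $F(\tau(\boldsymbol{\eta}))=\omega^k\tilde P_\tau(\omega)\ne 0$ for every $\tau\in G$, as required.

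The main conceptual point is that a single scalar parameter $\omega$ has to simultaneously control the potential Gauss-norm drop of $\tilde g$ and the non-vanishing of all the $\tilde P_\tau$, which is exactly what Lemma \ref{lem:largePrimesOk} (via reduction modulo the maximal ideal of the valuation ring) makes cheap. The only technical bookkeeping is the genericity of the linear form $\mathbf{a}$, which is elementary.
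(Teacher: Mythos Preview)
Your proof is correct and follows essentially the same approach as the paper: linearize by choosing a direction vector so that the inner products $n_i=\langle\mathbf{a},\mathbf{u}_i\rangle$ are pairwise distinct, then invoke Lemma~\ref{lem:largePrimesOk} together with the finiteness of the zeros of the $\tilde P_\tau$ to avoid finitely many bad roots of unity of order prime to $p$.

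The one noteworthy difference is a small economy on your side. The paper applies Lemma~\ref{lem:largePrimesOk} to \emph{every} coefficient polynomial $P_{\boldsymbol\alpha}(X)=\sum_i f_{i,\boldsymbol\alpha}X^{n_i}$, thereby forcing $|P_{\boldsymbol\alpha}(\zeta)|_p=|P_{\boldsymbol\alpha}|_p$ simultaneously for all $\boldsymbol\alpha$ and reading off $|F_\zeta|_p=\max_i|F_i|_p$ directly. You instead observe that the ultrametric inequality already gives $|F|_p\le\max_i|F_i|_p$ for free, so it suffices to control a \emph{single} well-chosen coefficient $\tilde g$ and apply Lemma~\ref{lem:largePrimesOk} only once. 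Both routes are equally valid; yours is marginally cleaner, while the paper's version yields the slightly stronger intermediate statement that every coefficient of $F_\zeta$ attains its maximal possible absolute value.
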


\begin{proof}If $m=1$ we can take any $\boldsymbol\zeta\in \mu_\infty^r$. So let us assume that $m\ge 2$ and define $R=\max_{1\leq i, j\leq m} |\mathbf{u}_i-\mathbf{u}_j|$. Let $\mathbf{v}\in\mathbb{Z}^r$ be such that $\rho(\mathbf{v})>R$. We will prove, that there exists $\zeta\in \mu_\infty$ such that $\zeta^{\mathbf{v}}$ has the desired properties. To do so, we show that both the non-vanishing properties and the equality of norms are satisfied for all but finitely many $\zeta$ of order prime to $p$.

For all $i=1,\dots, m$ let $n_i=\left<\mathbf{u}_i,\mathbf{v}\right>$. Since $\rho(\mathbf{v})>R\geq |\mathbf{u}_i-\mathbf{u}_j|$ we have $\left<\mathbf{u}_i-\mathbf{u}_j,\mathbf{v}\right>\neq 0$ for $i\neq j$ which implies that the $n_i$ are pairwise distinct. Let us define
$$F_\zeta=(\zeta^{\mathbf{v}})^{\mathbf{u}_1}F_1+\dots +(\zeta^{\mathbf{v}})^{\mathbf{u}_m}F_m=\zeta^{n_1}F_1+\dots +\zeta^{n_m}F_m.$$
For $\tau\in G$ we define $F_{\tau}=F_1(\tau(\boldsymbol{\eta}))X^{n_1}+\dots+F_m(\tau(\boldsymbol{\eta}))X^{n_m}$. Then clearly we have $F_\zeta(\tau(\boldsymbol{\eta}))=F_\tau(\zeta)$. Since the $n_i$ are pairwise distinct, the coefficients of $F_{\tau}$ are given by $F_i(\tau(\boldsymbol{\eta}))$. By hypothesis at least one of them is non zero and therefore $F_\tau\not=0$. Therefore the number of roots of $F_\tau$ is finite for every $\tau\in G$ and $F_\zeta(\tau(\boldsymbol{\eta}))\neq 0$ for all $\tau\in G$ for all but finitely many $\zeta$.\\
Let $D$ be an upper bound of the degrees of the $F_i$ and write $F_i=\sum_{\|\boldsymbol{\alpha}\|\leq D} f_{i,\boldsymbol{\alpha}} \mathbf{X}_n^{\boldsymbol{\alpha}}$ for all $i$, where $\|\boldsymbol{\alpha}\|=\alpha_1+\dots+\alpha_n$. Let us define $$P_{\boldsymbol{\alpha}}=X^{n_1}f_{1,\boldsymbol{\alpha}}+\dots + X^{n_m}f_{m,\boldsymbol{\alpha}}.$$ Then the coefficients of $F_\zeta$ are given by $P_{\boldsymbol{\alpha}}(\zeta).$ Now let $\zeta\in\mu_\infty$ have order prime to $p$. By Lemma \ref{lem:largePrimesOk} we have $|P_{\boldsymbol{\alpha}}(\zeta)|_p=|P_{\boldsymbol{\alpha}}|_p$ for all but finitely many such $\zeta$. For those we have $$|F_\zeta|_p=\max\{|P_{\boldsymbol{\alpha}}|_p:  \|\boldsymbol{\alpha}\|\leq D\}=\max\{|f_{i,\boldsymbol{\alpha}}|_p:  1\leq i \leq m,\; \|\boldsymbol{\alpha}\|\leq D\}=\max\{|F_i|_p: 1\leq i\leq m\}.$$
There are infinitely many numbers prime to $p$, and hence we find a $\zeta$ satisfying all the conditions.
\end{proof}

\begin{definition}\label{def:FVz}For $n\in\mathbb N$ we denote by $\mathbf X_n$ the vector of $n$ variables $(X_1,\dots, X_n)\in \mathbb C_p[X_1,\dots,X_n]$.
For $F\in \mathbb{C}_p[X_1,\dots, X_n]$ and $V\in\mathrm{GL}_d(\mathbb{Z})$ we set $F_V=F(\mathbf{X}_n^{V^{-1}})\mathbf{X}_n^{\mathbf{v}}$, where we choose $\mathbf{v}\in\mathbb{Z}^d$ such that $F_V$ is a polynomial coprime to $X_1\cdots X_n$. Let $l\in\{0,\dots, n-1\}$. For $\mathbf{z}=(z_1,\dots,z_l)\in\mathbb{C}_p^l$ we set $$F_{V,\mathbf{z}}=F_V(z_1,\dots, z_l, X_1,\dots, X_{n-l})\in \mathbb{C}_p[X_1,\dots, X_{n-l}]$$ In case $l=0$ this means $F_{V,\mathbf{z}}=F_V$.
\end{definition}

\subsection{Main theorem}
In this subsection we prove the main theorem. The proof follows the lines of the proof of Theorem 8.8 in \cite{Atoral}.
\begin{theorem}\label{thm:padic}Let $F\in\mathbb{C}_p[X_1,\dots,X_n]\setminus\{0\}$. There are constants $C=C(n)\geq 1$, $\kappa=\kappa(n)>0$ with the following property: Let $\boldsymbol{\zeta}\in \mathbb{G}_m^n$ have order $N$ and suppose $G<\mathrm{Gal}(\mathbb{Q}(\boldsymbol{\zeta})/\mathbb{Q})$ such that $F(\sigma(\boldsymbol{\zeta}))\neq 0$ for all $\sigma\in G.$ If $\delta(\boldsymbol{\zeta})\geq C\deg(F)^C$ then $$\frac{1}{\#G}\sum_{\sigma\in G} \log |F(\sigma(\boldsymbol{\zeta}))|_p = \log |F|_p + O_{n}\left(\frac{\deg(F)[\mathrm{Gal}(\mathbb Q(\boldsymbol\zeta)/\mathbb Q):G]^2f_G^{1/2}\left(1+{c}(F)\right)}{\delta(\boldsymbol{\zeta})^\kappa}\right).$$\end{theorem}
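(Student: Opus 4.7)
The plan is to follow the blueprint of Theorem~8.8 in \cite{Atoral}: use Lemma~\ref{lem:atoral87} to split off a sub-torus on which Lemma~\ref{lem:pend62} applies, reduce the Galois average via Lemma~\ref{lem:splitSum}, and handle the Gauss norm of the specialised polynomial using Lemma~\ref{lem:specialComb}.

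First I would fix parameters $\epsilon\in(0,1/2]$ and $\nu_1,\dots,\nu_{n-1}\in(0,1/2]$ satisfying $\sum_i\nu_i\le 1/2$ and $\nu_{l+1}\le 1/(126(n-l)^2)$ for $0\le l\le n-2$; these depend only on $n$. Writing $\delta=\delta(\boldsymbol{\zeta})$, I apply Lemma~\ref{lem:atoral87} to produce $l\in\{0,\dots,n-1\}$, a matrix $V\in\mathrm{GL}_n(\mathbb{Z})$ with $|V|\ll_n\delta^{2\epsilon^{n-l}}$, and a factorisation $\boldsymbol{\zeta}^V=(\boldsymbol{\eta},\boldsymbol{\xi})$ with $\boldsymbol{\eta}\in\mathbb{G}_m^l$ of order $E\le N^{1/2}$ and $\boldsymbol{\xi}\in\mathbb{G}_m^{n-l}$ of order $M$, where either $l=n-1$ with $M\ge N^{1/2}$, or $l\le n-2$ with $\lambda_1(\tilde{\Lambda}_{\boldsymbol{\xi}}(\nu_{l+1}))>\delta^{\epsilon^{n-l-1}}$. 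The subcase $l=0$ reduces immediately to Lemma~\ref{lem:pend62} applied to $F$ and $\boldsymbol{\zeta}$ itself, so from now on I assume $l\ge 1$.

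Using Definition~\ref{def:FVz} I would rewrite $|F(\sigma(\boldsymbol{\zeta}))|_p=|F_V(\sigma(\boldsymbol{\eta}),\sigma(\boldsymbol{\xi}))|_p$, the monomial correction $\mathbf{X}_n^{-\mathbf{v}}$ having $p$-adic absolute value one on torsion, and note $|F_V|_p=|F|_p$ since the coefficients of $F_V$ are just a re-indexing of those of $F$. Lemma~\ref{lem:splitSum} applied with $\star=\star_2$ to $\boldsymbol{\zeta}^V=\boldsymbol{\eta}\star_2\boldsymbol{\xi}$ then gives
\begin{equation*}
\frac{1}{\#G}\sum_{\sigma\in G}\log|F(\sigma(\boldsymbol{\zeta}))|_p=\frac{1}{[L(\boldsymbol{\eta}):L]}\sum_{\tau}\frac{1}{\#H}\sum_{\sigma\in\tilde{\tau}H}\log|F_{V,\tau(\boldsymbol{\eta})}(\sigma(\boldsymbol{\xi}))|_p,
\end{equation*}
where $L$ is the fixed field of $G$ and $H=\mathrm{Gal}(L(\boldsymbol{\xi})/L(\boldsymbol{\xi})\cap L(\boldsymbol{\eta}))$. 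For each $\tau$, I would invoke Lemma~\ref{lem:pend62} in dimension $n-l$ applied to $F_{V,\tau(\boldsymbol{\eta})}$, which evaluates the inner average as $\log|F_{V,\tau(\boldsymbol{\eta})}|_p$ plus an acceptable error. Its hypothesis $\tilde{\lambda}(\boldsymbol{\xi};\nu_{l+1})>(n-l)^{1/2}\deg(F_{V,\tau(\boldsymbol{\eta})})$ is secured by $\delta\ge C\deg(F)^C$ together with the lower bound on $\lambda_1(\tilde{\Lambda}_{\boldsymbol{\xi}}(\nu_{l+1}))$, the inequality $M\ge N^{1/2}\ge\delta^{1/2}$, and the degree estimate $\deg(F_{V,\tau(\boldsymbol{\eta})})\ll_n\deg(F)|V^{-1}|\ll_n\deg(F)\delta^{2(n-1)\epsilon^{n-l}}$ from Lemma~\ref{lem:absVinv}. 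The usual bookkeeping as in the proof of Lemma~\ref{lem:pend62} controls the auxiliary index $[\mathrm{Gal}(\mathbb{Q}(\boldsymbol{\xi})/\mathbb{Q}):\tilde{\tau}H]\le[\mathrm{Gal}(\mathbb{Q}(\boldsymbol{\zeta})/\mathbb{Q}):G]E$, the conductor $f_{\tilde{\tau}H}\le f_G E$ and the constant $c(F_{V,\tau(\boldsymbol{\eta})})\le c(F)$, and converts the factor $M^{-\nu_{l+1}^{n-l}/(12(n-l))}$ from Lemma~\ref{lem:pend62} into a factor $\delta^{-\kappa}$ with $\kappa=\kappa(n)>0$.

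The principal obstacle is to replace $\log|F_{V,\tau(\boldsymbol{\eta})}|_p$ by $\log|F|_p$ under the outer average over $\tau$. The inequality $|F_{V,\tau(\boldsymbol{\eta})}|_p\le|F_V|_p=|F|_p$ is immediate from the ultrametric estimate, so the outer average is at most $\log|F|_p$. For the matching lower bound I would appeal to Lemma~\ref{lem:specialComb}: each coefficient of $F_{V,\tau(\boldsymbol{\eta})}$ at the monomial $\mathbf{Z}^{\boldsymbol{\beta}}$ equals $\sum_{\boldsymbol{\alpha}}g_{(\boldsymbol{\alpha},\boldsymbol{\beta})}\tau(\boldsymbol{\eta})^{\boldsymbol{\alpha}}$, and Lemma~\ref{lem:specialComb} supplies a torsion twist realising Gauss norm $|F_V|_p$; combined with Lemma~\ref{lem:largePrimesOk} (or Theorem~\ref{thm:TV}) to bound the exceptional set of $\tau$ where the Gauss norm strictly drops, this gives the averaged equality up to an error absorbed in $\delta^{-\kappa}$. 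Choosing $C=C(n)$ large enough to swallow the implicit constants and degree factors then finishes the proof.
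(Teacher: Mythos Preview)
Your overall architecture---apply Lemma~\ref{lem:atoral87}, split the Galois sum via Lemma~\ref{lem:splitSum}, and handle the inner average with Lemma~\ref{lem:pend62}---matches the paper's proof. The genuine gap is in the final paragraph, where you try to replace the outer average $\frac{1}{[L(\boldsymbol{\eta}):L]}\sum_{\tau}\log|F_{V,\tau(\boldsymbol{\eta})}|_p$ by $\log|F|_p$.

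Your plan to ``bound the exceptional set of $\tau$ where the Gauss norm strictly drops'' via Lemma~\ref{lem:largePrimesOk} or Theorem~\ref{thm:TV} does not yield a power-saving error. The coefficient of $F_{V,\tau(\boldsymbol{\eta})}$ at $\mathbf{Z}^{\mathbf u}$ is $F_{\mathbf u}(\tau(\boldsymbol{\eta}))$ for some $F_{\mathbf u}\in\mathbb{C}_p[X_1,\dots,X_l]$. Lemma~\ref{lem:largePrimesOk} is univariate and restricted to roots of unity of order prime to $p$; but $\boldsymbol{\eta}$ is an $l$-tuple whose order $E$ is allowed to be a pure $p$-power, in which case \emph{every} $\tau(\boldsymbol{\eta})$ reduces to $\mathbf 1$ modulo the maximal ideal and the ``exceptional set'' can be all of $\mathrm{Gal}(L(\boldsymbol{\eta})/L)$. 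Falling back on Tate--Voloch only gives $\log|F_{V,\tau(\boldsymbol{\eta})}|_p\ge -c(F)+\log|F|_p$ uniformly, hence an error of order $c(F)$, not $c(F)\delta^{-\kappa}$.

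What the paper does instead is \emph{induction on $n$}. After using Lemma~\ref{lem:specialComb} to produce $\boldsymbol{\mu}\in\mu_\infty^{n-l}$ with $|F_{\boldsymbol{\mu}}|_p=|F|_p$ and $F_{\boldsymbol{\mu}}(\tau(\boldsymbol{\eta}))\ne 0$ for all $\tau$, the ultrametric inequality gives
\[
\frac{1}{[L(\boldsymbol{\eta}):L]}\sum_{\tau}\log|F_{V,\tau(\boldsymbol{\eta})}|_p\ \ge\ \frac{1}{[L(\boldsymbol{\eta}):L]}\sum_{\tau}\log|F_{\boldsymbol{\mu}}(\tau(\boldsymbol{\eta}))|_p,
\]
and the right-hand side is now a Galois average of exactly the type in the theorem, but for the polynomial $F_{\boldsymbol{\mu}}$ in $l\le n-1$ variables at the torsion point $\boldsymbol{\eta}$. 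One checks $\delta(\boldsymbol{\eta})\gg_n\delta(\boldsymbol{\zeta})^{1/2}$ (using $|V|\ll_n\delta^{2\epsilon^{n-l}}$) and $c(F_{\boldsymbol{\mu}})\le c(F)$, so the inductive hypothesis in dimension $l$ applies and delivers $\log|F_{\boldsymbol{\mu}}|_p=\log|F|_p$ with a power-saving error. This inductive step is the missing idea in your proposal; without it the outer average is not controlled to the required precision.
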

\begin{proof}
The case $n=1$ follows directly from Lemma \ref{lem:pend62} since $\delta(\zeta)=N$. So we may assume $n\geq 2$. We work with the parameters $\nu_1,\dots,\nu_{n-1}\in (0,1/(126n^2)]$ and $\epsilon\in (0, 1/2]$. They are assumed to be small in terms of $n$ but independent of $p, F$ and $\boldsymbol{\zeta}$. We may assume that $\epsilon\leq \nu_l^n/2$ for all $l$. We determine them during the argument. Let us apply Lemma \ref{lem:atoral87} to $\boldsymbol{\zeta}, \delta=\delta(\boldsymbol{\zeta}), \epsilon$ and the $\nu_l$. Say $l,V,\boldsymbol{\eta}$ and $\boldsymbol{\xi}$ are given by this lemma. So among other things we know that $\boldsymbol{\zeta}^V=(\boldsymbol{\eta,\xi})$, $|V|\ll_n \delta(\boldsymbol{\zeta})^{2\epsilon^{n-l}}$ and $\mathrm{ord}(\boldsymbol{\eta})\leq N^{\nu_1+\dots+\nu_l}$.\\
If $l=0$, then $V$ is the unit matrix, $\boldsymbol{\xi}=\boldsymbol{\zeta}$ and $\lambda_1(\tilde{\Lambda}_{\boldsymbol{\zeta}}(\nu_1))>\delta(\boldsymbol{\zeta})^{\epsilon^{n-1}}$ since we are in the second case of Lemma \ref{lem:atoral87} 2) as $n-l=n\geq 2$. We always have $\delta(\boldsymbol{\zeta})\leq N$ and therefore we get $\tilde{\lambda}(\boldsymbol{\zeta},\nu_1)\geq \delta(\boldsymbol{\zeta})^{\min\{\epsilon^{n-1}, \nu_1^n/2\}}$. If $C>n^{1/2}\epsilon^{1-n}$ we get $\tilde{\lambda}(\boldsymbol\zeta,\nu_1)\geq n^{1/2}\deg(F)$ which was defined in Definition \ref{def:lambda}. So we can apply Lemma \ref{lem:pend62} to $\boldsymbol{\zeta},\nu_1$ and $F$ and we are left with the case $l\geq 1$.\\
Let $L$ be the fixed field of $G$ in $\mathbb{Q}(\boldsymbol{\zeta})$. Let $H=\mathrm{Gal}(\mathbb{Q}(\boldsymbol{\xi})/\mathbb{Q}(\boldsymbol{\xi})\cap L(\boldsymbol{\eta}))$ which is isomorphic to $\mathrm{Gal}(L(\boldsymbol{\xi})/L(\boldsymbol{\xi})\cap L(\boldsymbol{\eta}))$ by Galois theory. For details see the proof of Lemma \ref{lem:pend62}. For every $\tau \in\mathrm{Gal}(L(\boldsymbol\eta)/L)$ let $\tilde \tau\in\mathrm{Gal}(L(\boldsymbol\xi)/L)$ be such that the restrictions of $\tau$ and $\tilde\tau$ on $L(\boldsymbol\xi)\cap L(\boldsymbol\eta)$ coincide. Recall that for $\mathbf z\in\mathbb G_m^l$ we defined $F_{V,\mathbf z}\in\mathbb C_p[X_1,\dots, X_{n-l}]$ in Definition \ref{def:FVz}. Then by Lemma \ref{lem:splitSum} we have 
\begin{equation}\label{eq:innerOuter}\frac{1}{\#G}\sum_{\sigma \in G} \log |F(\sigma(\boldsymbol{\zeta}))|_p=\frac{1}{[L(\boldsymbol\eta):L]}\sum_{\tau \in \text{Gal}(L(\boldsymbol{\eta})/L)} \frac{1}{\#H}\sum_{\sigma\in H} \log |F_{V,\tau(\boldsymbol{\eta})}(\sigma(\tilde{\tau}(\boldsymbol{\xi})))|_p.\end{equation}
We will apply Lemma \ref{lem:pend62} to every average in the sum, thus to the subgroup $H<\mathrm{Gal}(\mathbb Q(\boldsymbol\xi)/\mathbb Q)$, the polynomial $F_{V,\tau(\boldsymbol\eta)}$ and the torsion point $\tilde\tau(\boldsymbol\xi)$. So let $\tau \in\mathrm{Gal}(L(\boldsymbol\eta)/L)$ and let $M, E$ denote the orders of $\boldsymbol\xi$ and $\boldsymbol\eta$ respectively. Then the order of $\tilde\tau(\boldsymbol\xi)$ equals $M$ and the order of $\tau(\boldsymbol\eta)$ is $E$.
Let us bound the index and the conductor of $H$. The fixed field of $H$ is contained in $L(\boldsymbol{\eta})$, therefore $$[\mathrm{Gal}(\mathbb Q(\boldsymbol\xi)/\mathbb Q):H]\leq [L(\boldsymbol{\eta}):\mathbb{Q}]\leq [L:\mathbb{Q}]E= [\mathrm{Gal}(\mathbb Q(\boldsymbol\zeta)/\mathbb Q):G]E\leq [\mathrm{Gal}(\mathbb Q(\boldsymbol\zeta)/\mathbb Q):G]N^{\nu_1+\dots+\nu_l}$$ holds. By Lemma \ref{lem:conductor}, $L$ is contained in $\mathbb{Q}(\omega_{f_G})$. So $\mathbb{Q}(\boldsymbol{\xi})\cap L(\boldsymbol{\eta})\subseteq \mathbb{Q}(\boldsymbol{\xi})\cap \mathbb{Q}(\omega_{f_G},\boldsymbol{\eta})$. The last intersection is generated by a root of unity of order $\gcd(M,\mathrm{lcm}(f_G,E)).$ So by Lemma \ref{lem:conductor} we have $$f_H\leq \text{lcm}(f_G,E)\leq f_G E \leq f_G N^{\nu_1+\dots+\nu_l}.$$
Next we give a lower bound for $M$ and show therefore that $N\leq ME$. This follows if we can show $\boldsymbol{\zeta}^{ME}=\mathbf{1}$. We have $$\boldsymbol{\zeta}^{ME}=(\boldsymbol{\eta,\xi})^{V^{-1}ME}=((\boldsymbol{\eta,\xi})^{ME})^{V^{-1}}=\mathbf{1}^{V^{-1}}=\mathbf{1}.$$ Hence we can bound $M\geq N/E\geq N^{1-\nu_1-\dots -\nu_l}\geq N^{1/2}$.

We bound $\deg(F_{V,\tau(\boldsymbol\eta)})$ and $c(F_{V,\tau(\boldsymbol{\eta})})$. Let $\mathbf v=(v_1,\dots, v_n)^\top\in\mathbb Z^n$ such that $F_V=F(\mathbf{X}_n^{V^{-1}})\mathbf{X}_n^{\mathbf{v}}$. For any $\mathbf{a}\in\mathrm{Supp}(F)$ there is a term $\mathbf{X}_n^{V^{-1}\mathbf{a+v}}$ in $F_V$. Let $1\le i\le n$. Since $F_V$ is coprime to $X_1\cdots X_n$ there exists $\mathbf a\in \mathrm{Supp}(F)$ such that the $i$-th coordinate of $V^{-1}\mathbf{a+v}$ vanishes. Thus we have $|v_i|\le |V^{-1}\mathbf a|$ and hence $|\mathbf v|\le \max\{|V^{-1}\mathbf a|: \mathbf a\in\mathrm{Supp}(F)\}$ which implies that $\deg(F_V) \ll_n \max\{|V^{-1}\mathbf{a}|: \mathbf{a}\in\mathrm{Supp}(F)\}$. We find $$\deg(F_{V,\tau(\boldsymbol{\eta})}) \leq \deg(F_V) \ll_n \max\{|V^{-1}\mathbf{a}|: \mathbf{a}\in\mathrm{Supp}(F)\}\ll_n |V|^{n-1}\deg(F)\ll_n \delta(\boldsymbol\zeta)^{2\epsilon^{n-l}n}\deg(F)$$ using Lemma \ref{lem:absVinv}.
To see that $$c(F_{V,\tau(\boldsymbol{\eta})})\leq c(F)$$ we show $|F_{V,\tau(\boldsymbol{\eta})}|_p\leq |F|_p$ and $\inf(F)\leq \inf(F_{V,\tau(\boldsymbol{\eta})}).$ 
The coefficients of $F_{V,\tau(\boldsymbol{\eta})}$ are polynomials in the coordinates of $\tau(\boldsymbol{\eta})$ whose coefficients are coefficients of $F$. The ultrametric triangle inequality then implies that $|F_{V,\tau(\boldsymbol{\eta})}|_p\leq |F|_p$. Now let $\boldsymbol{\mu}\in\mu_\infty^{n-l}$ such that $F_{V,\tau(\boldsymbol{\eta})}(\boldsymbol{\mu})\neq 0$. Then we have $0\neq|F_{V,\tau(\boldsymbol{\eta})}(\boldsymbol{\mu})|_p=|F_V(\tau(\boldsymbol\eta),\boldsymbol\mu)|_p= \left|F\left((\tau(\boldsymbol\eta),\boldsymbol\mu)^{V^{-1}}\right)(\tau(\boldsymbol\eta),\boldsymbol\mu)^{\mathbf{v}}\right|_p$ which is at least $\inf(F)$ since $(\tau(\boldsymbol\eta),\boldsymbol\mu)^{V^{-1}}$ is a torsion point in $\mathbb{G}_m^n$. This implies the inequality between the infimas and finishes the bounds of the various terms occuring in the error.\\

We split again in the two cases $l\leq n-2$ and $l=n-1$. If $l\leq n-2$ we claim that $\tilde{\lambda}(\tilde\tau(\boldsymbol{\xi});\nu_{l+1})>n^{1/2}\deg(F_{V,\tau(\boldsymbol{\eta})})$. Since $\Lambda_{\boldsymbol\xi}=\Lambda_{\tilde\tau(\boldsymbol\xi)}$ we have $\tilde{\lambda}(\tilde\tau(\boldsymbol{\xi});\nu_{l+1})=\tilde{\lambda}(\boldsymbol{\xi};\nu_{l+1})$. We know from Lemma \ref{lem:atoral87} that $\lambda_1(\tilde{\Lambda}_{\boldsymbol{\xi}}(\nu_{l+1}))> \delta(\boldsymbol{\zeta})^{\epsilon^{n-l-1}}$. Since $\delta(\boldsymbol{\zeta})\leq N$ we find $\tilde{\lambda}(\boldsymbol{\xi};\nu_{l+1})\geq\delta(\boldsymbol{\zeta})^{\min\{\epsilon^{n-l-1}, \nu_{l+1}^n/2\}}=\delta(\boldsymbol{\zeta})^{\epsilon^{n-l-1}}$. On the other hand there is a function $c(n)$ such that $\deg(F_{V,\tau(\boldsymbol{\eta})}) \leq c(n)\delta(\boldsymbol{\zeta})^{2\epsilon^{n-l}n}\deg(F)$. So it suffices to show $n^{1/2}c(n)\delta(\boldsymbol{\zeta})^{2\epsilon^{n-l}n}\deg(F)<\delta(\boldsymbol{\zeta})^{\epsilon^{n-l-1}}$. If $\epsilon$ is small enough in terms of $n$ and $C$ big enough, this is the case. Let us now apply Lemma \ref{lem:pend62} to $\tilde\tau(\boldsymbol{\xi}), \nu_{l+1},F_{V,\tau(\boldsymbol{\eta})}$ and $H$. We find that
\begin{equation}\label{eq:innerSum}\begin{aligned}
\frac{1}{\#H}&\sum_{\sigma\in H} \log |F_{V,\tau(\boldsymbol{\eta})}(\sigma(\tilde\tau(\boldsymbol{\xi})))|_p = \log |F_{V,\tau(\boldsymbol{\eta})}|_p \\
&\quad+ O_{n,\nu_{l+1}}\left(\frac{\deg(F_{V,\tau(\boldsymbol{\eta})})[\mathrm{Gal}(\mathbb Q(\boldsymbol\xi)/\mathbb Q):H]^2f_H^{1/2}\left(1+c(F_{V,\tau(\boldsymbol{\eta})})\right)}{M^{\nu_{l+1}^n/(12n)}}\right).
\end{aligned}\end{equation}
Let us now look at the case when $l=n-1$. Since $1/(12n) < 1$ we can apply Lemma \ref{lem:pend62} to $\boldsymbol\xi, \epsilon=1-1/(12n)$ and $F_{V,\boldsymbol \eta}$ and get a similar error where we have $1$ instead of $\nu_{l+1}^n$. So let us define $\nu_n=1$. Then the equation \eqref{eq:innerSum} above holds for all $l$.
Using the various bounds incuding $M\ge N^{1/2}$ we can bound the error term by $$O_{n,\nu_{l+1}}\left(\frac{\delta(\boldsymbol{\zeta})^{2\epsilon^{n-l}n}\deg(F) [\mathrm{Gal}(\mathbb Q(\boldsymbol\zeta)/\mathbb Q):G]^2f_G^{1/2} N^{(2+1/2)(\nu_1+\dots+\nu_l)}\left(1+c(F)\right)}{N^{\nu_{l+1}^n/(24n)}}\right).$$ We use the trivial bound $ \delta(\boldsymbol{\zeta})\leq N$ and we assume that the parameters satisfy $5/2(\nu_1+\dots+\nu_l) + 2\epsilon^{n-l}n \leq \frac{\nu_{l+1}^n}{48n}$. Now fix such parameters $\nu_j$ in terms of $n$. We may later shrink $\epsilon$. Then we get rid of the dependency of the constant in $\nu_{l+1}$ and for all $\kappa\leq \frac{\nu_{l+1}^n}{48n}$ we have
$$\frac{1}{\#H}\sum_{\sigma\in H} \log |F_{V,\tau(\boldsymbol{\eta})}(\sigma(\tilde\tau(\boldsymbol{\xi})))|_p = \log |F_{V,\tau(\boldsymbol{\eta})}|_p + O_{n}\left(\frac{\deg(F) [\mathrm{Gal}(\mathbb Q(\boldsymbol\zeta)/\mathbb Q):G]^2f_G^{1/2}\left(1+c(F)\right)}{\delta(\boldsymbol{\zeta})^\kappa}\right).$$ 
Note that the estimates do not depend on $\tau$. Thus pluging in the bounds in \eqref{eq:innerOuter} we find
\begin{equation}\label{eq:bdFmu}\begin{aligned}\frac{1}{\#G}\sum_{\sigma \in G} \log |F(\sigma(\boldsymbol{\zeta}))|_p
&=\left(\frac{1}{[L(\boldsymbol{\eta}):L]}\sum_{\tau \in \text{Gal}(L(\boldsymbol{\eta})/L)} \log |F_{V,\tau(\boldsymbol{\eta})}|_p\right)\\
&\quad + O_{n}\left(\frac{\deg(F) [\mathrm{Gal}(\mathbb Q(\boldsymbol\zeta)/\mathbb Q):G]^2f_G^{1/2}\left(1+c(F)\right)}{\delta(\boldsymbol\zeta)^\kappa}\right).\end{aligned}\end{equation}
Our final task now is to see, that the average over the $\log|F_{V,\tau(\boldsymbol{\eta})}|_p$ is close to $\log |F|_p$. Recall that $|F_{V,\tau(\boldsymbol{\eta})}|_p\leq |F|_p$. So we have to find a good lower bound for the average. Let us abbreviate $S=\mathrm{Supp}(F)$, so we can write $F_V=\sum_{\mathbf{a}\in S} f_{\mathbf{a}} \mathbf{X}_n^{V^{-1}\mathbf{a+v}}$. For all $\mathbf{a}\in S$ let $\mathbf{v_a}\in\mathbb{Z}^l$ and $\mathbf{u_a}\in\mathbb{Z}^{n-l}$ be such that $V^{-1}\mathbf{a+v}=\begin{pmatrix}\mathbf{v_a }\\ \mathbf{u_a}\end{pmatrix}$. Recall that $\mathbf{X}_l=(X_1,\dots, X_l)$ and let $\mathbf{Z}=(X_{l+1},\dots,X_n)$.  Let further $U=\{\mathbf{u_a : a}\in S\}$ and $S_{\mathbf{u}} = \{ \mathbf{a}\in S : \mathbf{u_a=u}\}$ and $F_{\mathbf{u}} = \sum_{\mathbf{a}\in S_\mathbf{u}} f_{\mathbf{a}}\mathbf{X}_l^{\mathbf{v_a}}$ for all $\mathbf{u}\in U$. Then we have
$$F_V= \sum_{\mathbf{u}\in U}\sum_{\mathbf{a}\in S_{\mathbf{u}}}f_{\mathbf{a}} \mathbf{X}_n^{V^{-1}\mathbf{a+v}}= \sum_{\mathbf{u}\in U}\sum_{\mathbf{a}\in S_{\mathbf{u}}}f_{\mathbf{a}} \mathbf{X}_l^{\mathbf{v_a}}\mathbf{Z}^{\mathbf{u_a}}= \sum_{\mathbf{u}\in U}F_{\mathbf{u}} \mathbf{Z}^{\mathbf{u}}.$$ Therefore for all $\mathbf z\in\mathbb G_m^l$ we have $F_{V,\mathbf z}=\sum_{\mathbf{u}\in U} F_\mathbf{u}(\mathbf z)\mathbf{X}_{n-l}^\mathbf{u}.$ So the coefficients of $F_{V,\mathbf z}$ are given by $F_{\mathbf{u}}(\mathbf z)$, $\mathbf{u}\in U$ and hence we have $$\log|F_{V,\mathbf z}|_p =\log\left( \max\{ |F_{\mathbf{u}}(\mathbf z)|_p: \mathbf{u}\in U\}\right).$$ 
Let $\tau\in \mathrm{Gal}(L(\boldsymbol{\eta})/L)$. Then there exists $\sigma\in G$ which restricts to $\tau$. Note that we have ${0\neq |F(\sigma(\boldsymbol{\zeta}))|_p= |F_{V,\sigma(\boldsymbol{\eta})}(\sigma(\boldsymbol{\xi}))|_p}$. Hence $F_{V,\tau(\boldsymbol{\eta})}\neq 0,$ especially for every $\tau$ there exists $\mathbf{u}\in U$ such that $F_{\mathbf{u}}(\tau(\boldsymbol{\eta}))\neq 0$. So we can apply Lemma \ref{lem:specialComb} to $F_{\mathbf{u}}, \mathbf{u}\in U$ to find $\boldsymbol{\mu}\in\mu_\infty^{n-l}$ such that ${F_{\boldsymbol{\mu}}=\sum_{\mathbf{u}\in U} \boldsymbol{\mu}^{\mathbf{u}} F_{\mathbf{u}}}$ does not vanish on $\tau(\boldsymbol{\eta})$ for all $\tau\in \mathrm{Gal}(L(\boldsymbol{\eta})/L)$, and ${|F_{\boldsymbol{\mu}}|_p=\max\{|F_{\mathbf{u}}|_p: \mathbf{u}\in U\}}$.
By the ultrametric inequality we have \begin{equation}\label{eq:Fmu}\begin{aligned}\frac{1}{[L(\boldsymbol{\eta}):L]}\sum_{\tau \in \text{Gal}(L(\boldsymbol{\eta})/L)} \log |F_{V,\tau(\boldsymbol{\eta})}|_p&= \frac{1}{[L(\boldsymbol{\eta}):L]}\sum_{\tau \in \text{Gal}(L(\boldsymbol{\eta})/L)}\log\left(\max_{\mathbf{u}\in U}|F_{\mathbf{u}}(\tau(\boldsymbol{\eta}))|_p\right)\\
&\geq \frac{1}{[L(\boldsymbol{\eta}):L]}\sum_{\tau \in \text{Gal}(L(\boldsymbol{\eta})/L)}\log |F_{\boldsymbol{\mu}}(\tau(\boldsymbol{\eta}))|_p.\end{aligned}\end{equation} 
Recall that the case $n=1$ is already done and the number of variables in $F_{\boldsymbol{\mu}}$ is $l\leq n-1$. So we can do induction on $n$ and assume that our theorem already holds for all polynomials in strictly less than $n$ variables. In particular it holds for $F_{\boldsymbol{\mu}}$.
Since $\mathrm{Gal}(L(\boldsymbol{\eta})/L)$ is isomorphic to $H'=\mathrm{Gal}(\mathbb{Q}(\boldsymbol{\eta})/\mathbb{Q}(\boldsymbol{\eta})\cap L)$ by restriction it can be viewed as a subgroup of $\mathrm{Gal}(\mathbb{Q}(\boldsymbol{\eta})/\mathbb Q)$. Recall that $E$ is the order of $\boldsymbol\eta$. Then $$[\mathrm{Gal}(\mathbb Q(\boldsymbol\eta)/\mathbb Q):H']=[\mathbb{Q}(\boldsymbol{\eta})\cap L:\mathbb{Q}]\leq [L:\mathbb{Q}]=[\mathrm{Gal}(\mathbb Q(\boldsymbol\zeta)/\mathbb Q):G].$$ The fixed field of $H'$ is contained in $L$ which itself is contained in the rationals adjoint by a root of unity of order $f_G$. So by Lemma \ref{lem:conductor} we have $f_{H'}\leq f_G$. The degree of $F_{\boldsymbol{\mu}}$ is bounded by $\deg(F_V)\ll_n \delta(\boldsymbol{\zeta})^{2\epsilon^{n-l}n}\deg(F)$. We have $F_{\boldsymbol{\mu}}=\sum_{\mathbf{u}\in U} \boldsymbol{\mu}^{\mathbf{u}} F_{\mathbf{u}}=F_{V}(\mathbf{X}_l,\boldsymbol{\mu})$. To show $\inf(F)\leq\inf(F_{\boldsymbol{\mu}})$ we let $\boldsymbol{\rho}\in\mu_\infty^l$. Then $|F_{\boldsymbol{\mu}}(\boldsymbol{\rho})|_p=|F_V(\boldsymbol{\rho,\mu})|_p=\left|F\left((\boldsymbol{\rho,\mu})^{V^{-1}}\right)(\boldsymbol{\rho,\mu})^{\mathbf{v}}\right|_p=\left|F\left((\boldsymbol{\rho,\mu})^{V^{-1}}\right)\right|_p$. This implies the inequalities between the infimas since $(\boldsymbol{\rho,\mu})^{V^{-1}}$ is a torsion point in $\mathbb{G}_m^n$.
By the ultrametric inequality we have $|F_{\boldsymbol{\mu}}|_p\leq |F|_p$, so we find $c(F_{\boldsymbol{\mu}})\leq c(F)$. 

Next we prove $\delta(\boldsymbol{\eta})\gg_n \delta(\boldsymbol{\zeta})^{1/2}$. Let $\mathbf{w}\in \mathbb{Z}^l\setminus \{{0}\}$ be such that $\boldsymbol{\eta}^{\mathbf{w}}=1$ and $|\mathbf{w}|=\delta(\boldsymbol{\eta})$. Then $$\boldsymbol{\zeta}^{V\begin{pmatrix}\mathbf{w} \\ \mathbf{0}\end{pmatrix}}=1\text{ and therefore }\delta(\boldsymbol{\zeta})\leq \left|V\begin{pmatrix}\mathbf{w} \\ \mathbf{0}\end{pmatrix}\right|\ll_n |V||\mathbf{w}|\ll_n\delta(\boldsymbol{\zeta})^{2\epsilon^{d-l}}\delta(\boldsymbol{\eta}).$$ Since $1-2\epsilon^{n-l}\geq 1/2$ the claimed inequality follows. To apply this theorem by induction on $n$, we have to check that $\delta(\boldsymbol{\eta})\geq C(l)\deg(F_{\boldsymbol{\mu}})^{C(l)}$. Since $\delta(\boldsymbol{\eta})\gg_n \delta(\boldsymbol{\zeta})^{1/2}$ and  $\deg(F_{\boldsymbol{\mu}})\ll_n \delta(\boldsymbol{\zeta})^{2\epsilon^{n-l}n}\deg(F)$ it is enough to check that $$\delta(\boldsymbol{\zeta})^{1-4\epsilon^{n-l}nC(l)}\gg_n C(l)^2\deg(F)^{2C(l)}.$$ If $C(n)$ is large enough in terms of $C(l)$ and $\epsilon$ small in terms of $n$ and $C(l)$, for example such that $1-4\epsilon^{n-l}nC(l)\geq 1/2$, then the inequality above follows from the hypothesis. We get \begin{align*}\frac{1}{\#H'}\sum_{\tau\in H'} \log |F_{\boldsymbol{\mu}}&(\tau(\boldsymbol{\eta}))|_p = \log |F_{\boldsymbol{\mu}}|_p + O_{n}\left(\frac{\deg(F_{\boldsymbol{\mu}})[(\mathbb{Z}/E\mathbb{Z})^*:H']^2f_H'^{1/2}\left(1+{c}(F_{\boldsymbol{\mu}})\right)}{\delta(\boldsymbol\eta)^{\kappa(l)}}\right)\\
&= \log |F_{\boldsymbol{\mu}}|_p + O_{n}\left(\frac{\delta(\boldsymbol{\zeta})^{2\epsilon^{n-l}n}\deg(F)[(\mathbb{Z}/N\mathbb{Z})^*:G]^2f_G^{1/2}\left(1+c(F)\right)}{\delta(\boldsymbol{\zeta})^{\kappa(l)/2}}\right).\end{align*} We may shrink $\epsilon$ a last time and assume that $2\epsilon^{n-l}n\leq \kappa(l)/4$. 
By construction we have $|F_{\boldsymbol{\mu}}|_p=\max\{|F_{\mathbf{u}}|_p: \mathbf{u}\in U\}$. But $|F_{\mathbf{u}}|_p = \max\{|f_{\boldsymbol{\alpha}}|_p: \boldsymbol{\alpha}\in S_{\mathbf{u}}\}$, so $|F_{\boldsymbol{\mu}}|_p= \max\{|f_{\boldsymbol{\alpha}}|_p: \boldsymbol{\alpha}\in S\}=|F|_p.$ Hence with \eqref{eq:Fmu} we get
$$\frac{1}{[L(\boldsymbol{\eta}):L]}\sum_{\tau \in \text{Gal}(L(\boldsymbol{\eta})/L)} \log |F_{V,\tau(\boldsymbol{\eta})}|_p\geq\log|F|_p+O_{n}\left(\frac{\deg(F)[(\mathbb{Z}/N\mathbb{Z})^*:G]^2f_G^{1/2}\left(1+{c}(F)\right)}{\delta(\boldsymbol{\zeta})^{\kappa(l)/4}}\right).$$
 With \eqref{eq:bdFmu} and since  $\frac{1}{\#G}\sum_{\sigma \in G} \log |F(\sigma(\boldsymbol{\zeta}))|_p\le  \log |F|_p$ by the ultrametric inequality we get$$\frac{1}{\#G}\sum_{\sigma \in G} \log |F(\sigma(\boldsymbol{\zeta}))|_p= \log |F|_p+ O_{n}\left(\frac{\deg(F)[(\mathbb{Z}/N\mathbb{Z})^*:G]^2f_G^{1/2}\left(1+{c}(F)\right)}{\delta(\boldsymbol{\zeta})^{\kappa}}\right),$$ where $\kappa\leq \min\left\{\frac{\nu_{l+1}^n}{48n},\kappa(l)/4\right\}$. This finishes the induction and the proof.\end{proof}

\begin{proof}[Proof of Theorem \ref{thm:pSimple}] Suppose that $\boldsymbol\zeta\in\mathbb G_m^n$ has finite order and satisfies $F(\boldsymbol\zeta)=0$. Laurent proved the Manin-Mumford conjecture for $\mathbb G_m^n$ over $\mathbb C$ in \cite{Laurent}. Since $\mathbb C$ and $\mathbb C_p$ are isomorphic, the Manin-Mumford conjecture is also true in $\mathbb C_p$ and implies that $\delta(\boldsymbol\zeta)$ is bounded. Thus if $\delta(\boldsymbol\zeta)$ is sufficiently large in terms of $n$ and $F$ we have $F(\boldsymbol\zeta)\ne0$ and the same is true for all conjugates of $\boldsymbol\zeta$. We fix $G=\mathrm{Gal}(\mathbb Q(\boldsymbol\zeta/\mathbb Q)$ have $[\mathrm{Gal}(\mathbb Q(\boldsymbol\zeta)/\mathbb Q):G]=1$ and $f_G=1$. The theorem now follows from Theorem \ref{thm:padic}.\end{proof}

\section{Ih's Conjecture}
In the last part we prove a special case of Ih's Conjecture for polynomials all of whose conjugates are essentially atoral.
\subsection{$S$-integers, $S$-units and $p$-adic absolute values}
We introduce $p$-adic absolute values on number fields and define $S$-units for a finite set of primes $S$.
\begin{definition}Let $K\subseteq \mathbb{C}$ be a number field with ring of integers $\mathbb{Z}_K$. Let $\{0\}\neq \mathfrak{P}\subseteq \mathbb{Z}_K$ be a prime ideal. Let $x\in K\setminus\{0\}$. We denote by $\nu_{\mathfrak{P}}(x)$ the exponent of $\mathfrak{P}$ in the factorisation of $(x)$. There exists a unique prime $p\in\mathfrak{P}$. Let us define $e(\mathfrak{P})=\nu_\mathfrak{P}(p).$ The norm $N(\mathfrak{P})$ is defined as the cardinality of $\kappa(\mathfrak{P})=\mathbb{Z}_K/\mathfrak{P}$ which equals $p^{f(\mathfrak{P})}$ for some natural number $f(\mathfrak{P})$. Then let $d_\mathfrak{P}=e(\mathfrak{P})f(\mathfrak{P})$.
Finally we define $$|x|_{\mathfrak{P}}=\begin{cases}p^{-\nu_\mathfrak{P}(x)/e(\mathfrak{P})} & \text{ if }x\in K\setminus\{0\} \\
0 &\text{ if } x=0\end{cases}.$$
By $M(K)$ we denote the set of places of $K$; the infinite ones by $M^\infty(K)$ and the finite ones by $M^0(K)$. Let $S\subseteq\mathbb{P}$ be a finite set of rational primes. Then we define $S_K^0=\{\nu\in M^0(K): \nu | p\text{ for some } p\in S\}$ and $S_K=M^\infty(K)\cup S_K^0\subseteq M(K),$ a finite set of places of $K$, containing the archimedian ones.
\end{definition}

\begin{lemma}\label{lem:absInd}Let $S\subseteq\mathbb{P}$ be a finite set of primes, $L/K$ an extension of number fields and $P\in K[X_1,\dots,X_n].$ Then $$\frac{1}{[K:\mathbb{Q}]}\sum_{\mathfrak{p}\in S_K^0}d_{\mathfrak{p}}\log |P|_{\mathfrak{p}}=\frac{1}{[L:\mathbb{Q}]}\sum_{\mathfrak{P}\in S_L^0}d_{\mathfrak{P}}\log|P|_{\mathfrak{P}}.$$\end{lemma}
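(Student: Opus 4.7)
The plan is to reduce everything to the classical tower formulas $e(\mathfrak{P}/p)=e(\mathfrak{P}/\mathfrak{p})e(\mathfrak{p}/p)$, $f(\mathfrak{P}/p)=f(\mathfrak{P}/\mathfrak{p})f(\mathfrak{p}/p)$ and $\sum_{\mathfrak{P}|\mathfrak{p}}e(\mathfrak{P}/\mathfrak{p})f(\mathfrak{P}/\mathfrak{p})=[L:K]$. First I would verify the local comparison $|P|_\mathfrak{P}=|P|_\mathfrak{p}$ whenever $\mathfrak{P}\in M^0(L)$ lies above $\mathfrak{p}\in M^0(K)$. Since the coefficients of $P$ lie in $K$, it suffices to prove $|x|_\mathfrak{P}=|x|_\mathfrak{p}$ for every $x\in K^*$. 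Writing $p$ for the rational prime below $\mathfrak{P}$ (and $\mathfrak{p}$), one has $\nu_\mathfrak{P}(x)=e(\mathfrak{P}/\mathfrak{p})\nu_\mathfrak{p}(x)$ and $e(\mathfrak{P})=e(\mathfrak{P}/p)=e(\mathfrak{P}/\mathfrak{p})e(\mathfrak{p}/p)=e(\mathfrak{P}/\mathfrak{p})e(\mathfrak{p})$, so the exponents $-\nu_\mathfrak{P}(x)/e(\mathfrak{P})$ and $-\nu_\mathfrak{p}(x)/e(\mathfrak{p})$ coincide, giving $|x|_\mathfrak{P}=|x|_\mathfrak{p}$ and thus the analogous identity for the Gauss norm $|P|_\mathfrak{P}=\max_{\boldsymbol\alpha}|p_{\boldsymbol\alpha}|_\mathfrak{P}=\max_{\boldsymbol\alpha}|p_{\boldsymbol\alpha}|_\mathfrak{p}=|P|_\mathfrak{p}$.

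Next I would compute $d_\mathfrak{P}$ in terms of $d_\mathfrak{p}$. By multiplicativity of ramification and residual degrees in the tower $\mathbb Q\subseteq K\subseteq L$, one has $d_\mathfrak{P}=e(\mathfrak{P}/p)f(\mathfrak{P}/p)=e(\mathfrak{P}/\mathfrak{p})f(\mathfrak{P}/\mathfrak{p})\,d_\mathfrak{p}$. Moreover, the set $S_L^0$ is exactly the set of primes of $L$ lying above some $\mathfrak{p}\in S_K^0$, since both correspond to primes lying above some $p\in S$.

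With these ingredients in hand, the identity is an easy rearrangement. Grouping the sum over $S_L^0$ by the prime of $K$ below and using the step above, I get
$$\sum_{\mathfrak{P}\in S_L^0}d_\mathfrak{P}\log|P|_\mathfrak{P}=\sum_{\mathfrak{p}\in S_K^0}\sum_{\mathfrak{P}\mid\mathfrak{p}}e(\mathfrak{P}/\mathfrak{p})f(\mathfrak{P}/\mathfrak{p})\,d_\mathfrak{p}\log|P|_\mathfrak{p}=\sum_{\mathfrak{p}\in S_K^0}d_\mathfrak{p}\log|P|_\mathfrak{p}\cdot[L:K],$$
by the fundamental identity $\sum_{\mathfrak{P}\mid\mathfrak{p}}e(\mathfrak{P}/\mathfrak{p})f(\mathfrak{P}/\mathfrak{p})=[L:K]$. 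Dividing by $[L:\mathbb Q]=[L:K][K:\mathbb Q]$ yields the claim. There is no real obstacle here; the only thing to be careful about is that the normalization of $|\cdot|_\mathfrak{p}$ chosen in the definition (namely $p^{-\nu_\mathfrak{p}/e(\mathfrak{p})}$) is exactly what makes it extend the $p$-adic absolute value from $\mathbb Q$ independently of the field, and the weights $d_\mathfrak{p}$ are then precisely what is needed for the resulting sum to be invariant under extension of scalars.
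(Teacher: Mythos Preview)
Your proof is correct and follows essentially the same approach as the paper: establish that $|\cdot|_\mathfrak{P}$ restricts to $|\cdot|_\mathfrak{p}$ on $K$ (hence the Gauss norms agree), group $S_L^0$ by the primes of $K$ below, and use the degree identity $\sum_{\mathfrak{P}\mid\mathfrak{p}} d_\mathfrak{P}=[L:K]\,d_\mathfrak{p}$. The only difference is expository: the paper obtains these facts by citing Neukirch and Bombieri--Gubler, whereas you spell them out directly via the tower formulas for $e$ and $f$ and the fundamental identity.
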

\begin{proof}Let $\mathfrak p\in M_K^0, \mathfrak P\in M_L^0$ above $\mathfrak p$ and $x\in K\setminus \{0\}$. Using Satz III.1.2.v in \cite{Neukirch} we find that $$|x|_{\mathfrak P}^{d_\mathfrak P}=|N_{L_{\mathfrak P}/K_{\mathfrak p}}(x)|_{\mathfrak p}^{d_{\mathfrak p}} \text{ and hence }|N_{L_{\mathfrak P}/K_{\mathfrak p}}(x)|_{\mathfrak p}^{1/[L:K]}=|x|_{\mathfrak P}^{\frac{d_\mathfrak P}{d_\mathfrak p[L:K]}}.$$ We can now apply Lemma 1.3.7 in \cite{BombGub} and after multiplying by $\frac{d_{\mathfrak p}}{[k:\mathbb Q]}$ we get \begin{equation}\label{eq:LK}\frac{1}{[L:\mathbb Q]}\sum_{\mathfrak P|\mathfrak p}d_\mathfrak P \log |x|_{\mathfrak P}=\frac{d_{\mathfrak p}}{[K:\mathbb Q]}\log |x|_{\mathfrak p}.\end{equation} Note that we have $S_L^0=\bigcup_{\mathfrak{p}\in S_K^0} \{\mathfrak{P}\in M_L^0: \mathfrak{P}|\mathfrak{p}\}$ and the union is disjoint. Since $|\cdot|_{\mathfrak P}$ extends $|\cdot|_{\mathfrak p}$ we have $|x|_{\mathfrak P}<|y|_{\mathfrak P}$ if and only if $|x|_{\mathfrak p}<|y|_{\mathfrak p}$ for all $x,y\in K$. Thus equation \eqref{eq:LK} is also true for the Gauss norm of $P$. Summing over $\mathfrak p\in S^0_K$ we find $$\frac{1}{[K:\mathbb{Q}]}\sum_{\mathfrak{p}\in S_K^0}d_{\mathfrak{p}}\log |P|_{\mathfrak{p}}=\sum_{\mathfrak{p}\in S_K^0}\frac{1}{[L:\mathbb{Q}]}\sum_{\mathfrak{P}|\mathfrak{p}}d_{\mathfrak{P}}\log|P|_{\mathfrak{P}}=\frac{1}{[L:\mathbb{Q}]}\sum_{\mathfrak{P}\in S_L^0}d_{\mathfrak{P}}\log|P|_{\mathfrak{P}}.\qedhere$$\end{proof}

\begin{definition}Let $K$ be a number field and $S$ a finite set of places of $K$ containing the archimedian ones. Then the ring of $S$-integers is defined as $$\mathbb{Z}_{K,S}=\{x\in K: |x|_\nu\leq 1 \text{ for all }\nu\in M_K\setminus S\}.$$\end{definition}

\begin{remark}It is not hard to prove that the units of $\mathbb Z_{K,S}$ are given by $$(\mathbb{Z}_{K,S})^*=\{x\in K: |x|_\nu = 1 \text{ for all }\nu\in M_K\setminus S\}.$$\end{remark}

\begin{lemma}\label{lem:DefSUnit}Let $S$ be a set of primes and $x\in \overline{\mathbb{Q}}$. Then there are equivalent\begin{enumerate}
\item There exists a number field $K$ such that $x\in(\mathbb{Z}_{K,S_K})^*.$
\item For each number field $K$ containing $x$ we have $x\in(\mathbb{Z}_{K,S_K})^*.$\end{enumerate}\end{lemma}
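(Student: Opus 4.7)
The implication $(2) \Rightarrow (1)$ is immediate by applying $(2)$ to any specific number field containing $x$, for example $K = \mathbb{Q}(x)$, so the real content is $(1) \Rightarrow (2)$. Suppose $K$ is a number field with $x \in (\mathbb{Z}_{K,S_K})^*$ and let $L$ be any other number field containing $x$; I need $|x|_{\nu'}=1$ for every $\nu' \in M(L)\setminus S_L$. My plan is to compare the absolute values of $K$ and $L$ through their compositum $M = KL$.

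The key preliminary I would record is the restriction property: if $\mathfrak{P}$ is a non-zero prime of $\mathbb{Z}_M$ lying over $\mathfrak{p} \subseteq \mathbb{Z}_K$, then $|\cdot|_{\mathfrak{P}}$ restricted to $K$ coincides with $|\cdot|_{\mathfrak{p}}$. This drops straight out of the paper's definition $|x|_\mathfrak{P} = p^{-\nu_\mathfrak{P}(x)/e(\mathfrak{P})}$ together with the multiplicativity of valuations and ramification indices in a tower, $\nu_\mathfrak{P}(x) = e(\mathfrak{P}/\mathfrak{p})\,\nu_\mathfrak{p}(x)$ and $e(\mathfrak{P}/p) = e(\mathfrak{P}/\mathfrak{p})\,e(\mathfrak{p}/p)$; the two factors of $e(\mathfrak{P}/\mathfrak{p})$ cancel, yielding $|x|_\mathfrak{P} = |x|_\mathfrak{p}$ for $x \in K$. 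The same applies with $L$ in place of $K$.

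Now fix $\nu' \in M(L)\setminus S_L$. Since $S_L$ contains $M^\infty(L)$, $\nu'$ is a finite place, corresponding to a non-zero prime of $\mathbb{Z}_L$ above some rational prime $p \notin S$. Extend $\nu'$ to a finite place $\mu$ of $M$ (any prime of $\mathbb{Z}_M$ lying above the prime giving $\nu'$ works), and set $\nu = \mu|_K$. Then $\nu$ is a finite place of $K$ lying over $p \notin S$, so $\nu \in M^0(K)\setminus S_K^0 \subseteq M(K)\setminus S_K$. The hypothesis from $(1)$ gives $|x|_\nu = 1$, and the restriction property chains this to $|x|_{\nu'} = |x|_\mu = |x|_\nu = 1$, which is what I needed.

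There is no genuine obstacle here; the whole argument is a bookkeeping exercise confirming that the paper's normalization of the $p$-adic absolute values on a number field is precisely the one compatible with passing to a larger field. The only thing to double-check is this compatibility of normalizations, and as indicated above this is a direct consequence of the multiplicativity of ramification indices.
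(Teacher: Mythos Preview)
Your proof is correct and rests on the same key fact as the paper's, namely that the normalized absolute value $|\cdot|_\mathfrak{P}$ restricts to $|\cdot|_\mathfrak{p}$ on the smaller field. The only cosmetic difference is the choice of auxiliary field: you route through the compositum $KL$, whereas the paper routes through $\mathbb{Q}(x)$ (first descending from $K$ to $\mathbb{Q}(x)$, then ascending from $\mathbb{Q}(x)$ to an arbitrary $L\ni x$); both are standard and equally short.
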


\begin{proof}One direction is obvious. So let us assume that there exists a number field $K$ such that $x\in(\mathbb{Z}_{K,S_K})^*.$ Then $K$ contains $\mathbb{Q}(x)$. Let $\mathfrak{p}\in M({\mathbb{Q}(x)})\setminus S_{\mathbb{Q}(x)}$ and $\mathfrak{P}\in M(K)$ a prime ideal which lies above $\mathfrak{p}$. Then the prime in $\mathfrak{P}$ is not contained in $S$ and therefore $\mathfrak{P}\in M({K})\setminus S_{K}$. We have $|x|_{\mathfrak{p}}=|x|_{\mathfrak{P}}=1$ and therefore $x\in(\mathbb{Z}_{\mathbb{Q}(x),S_{\mathbb{Q}(x)}})^*.$ Now let $L$ be some number field which contains $x$ and $\mathfrak{P}\in M(L)\setminus S_L$. Let $\mathfrak{p}\in M({\mathbb{Q}(x)})$ be such that $\mathfrak{P}|\mathfrak{p}$. Then $\mathfrak{p}\in  M({\mathbb{Q}(x)})\setminus S_{\mathbb{Q}(x)}$ and therefore $|x|_{\mathfrak{P}}=|x|_\mathfrak{p}=1$, which implies that $x\in(\mathbb{Z}_{L,S_L})^*$.\end{proof}

\begin{definition}Let $S$ be a finite set of primes. If $x\in\overline{\mathbb{Q}}$ satisfies the conditions in Lemma \ref{lem:DefSUnit}, we call it $S$-unit.\end{definition}

\begin{lemma}\label{lem:conjSunit}Let $S$ be a finite set of primes and let $x$ be in an algebraic closure of $\mathbb Q$. If $x$ is an $S$-unit so are its conjugates.\end{lemma}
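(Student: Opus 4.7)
Let $y$ be a conjugate of $x$. The plan is to pass to a Galois number field $L/\mathbb{Q}$ containing both $x$ and $y$ (for example the Galois closure of $\mathbb{Q}(x,y)$) and exploit the fact that $\mathrm{Gal}(L/\mathbb{Q})$ acts transitively on the places of $L$ lying above a given place of $\mathbb{Q}$, while the set $S_L$ depends only on the rational primes below. By Lemma \ref{lem:DefSUnit} it suffices to show that $y\in (\mathbb{Z}_{L,S_L})^*$, i.e.\ that $|y|_{\mathfrak P}=1$ for every $\mathfrak P\in M(L)\setminus S_L$.

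The main step is to record the compatibility of $|\cdot|_{\mathfrak P}$ with the Galois action. Pick $\sigma\in\mathrm{Gal}(L/\mathbb{Q})$ with $\sigma(x)=y$ (which exists because $L/\mathbb{Q}$ is Galois and $y$ is a $\mathbb{Q}$-conjugate of $x$). For a finite place $\mathfrak P$ of $L$, the ideal $\sigma^{-1}(\mathfrak P)$ is again a prime of $\mathcal O_L$ lying above the same rational prime $p$ as $\mathfrak P$; consequently $e(\sigma^{-1}\mathfrak P)=e(\mathfrak P)$ and $\nu_{\sigma^{-1}\mathfrak P}(z)=\nu_{\mathfrak P}(\sigma(z))$ for all $z\in L\setminus\{0\}$. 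Plugging into the definition of $|\cdot|_{\mathfrak P}$ gives
$$|\sigma(z)|_{\mathfrak P}=|z|_{\sigma^{-1}\mathfrak P}.$$

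Now let $\mathfrak P\in M(L)\setminus S_L$. Since $S_L$ contains all archimedean places and all finite places whose residue characteristic lies in $S$, and since $\sigma^{-1}\mathfrak P$ lies above the same rational prime as $\mathfrak P$, the place $\sigma^{-1}\mathfrak P$ also lies in $M(L)\setminus S_L$. Using that $x$ is an $S$-unit and Lemma \ref{lem:DefSUnit} applied to the field $L$, we conclude
$$|y|_{\mathfrak P}=|\sigma(x)|_{\mathfrak P}=|x|_{\sigma^{-1}\mathfrak P}=1,$$
as required. The only (minor) obstacle is bookkeeping the compatibility between Galois action and the normalized absolute value $|\cdot|_{\mathfrak P}$; once this is set up, Galois stability of $S_L$ closes the argument immediately.
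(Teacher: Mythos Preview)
Your proof is correct and follows essentially the same approach as the paper: pass to a Galois number field containing $x$, observe that for $\sigma$ in the Galois group the pulled-back absolute value $z\mapsto|\sigma(z)|_{\mathfrak P}$ is again a place above the same rational prime (so still outside $S_L$), and conclude $|\sigma(x)|_{\mathfrak P}=1$. The only cosmetic difference is that you explicitly identify the new place as $\sigma^{-1}\mathfrak P$ via the formula $\nu_{\sigma^{-1}\mathfrak P}(z)=\nu_{\mathfrak P}(\sigma(z))$, whereas the paper simply notes that $z\mapsto|\sigma(z)|_\nu$ is a $p$-adic absolute value and hence equals some $|\cdot|_{\nu'}$ with $\nu'\mid p$.
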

\begin{proof}Let $K/\mathbb Q$ be a Galois extension such that $x\in K$. Let $\nu\in M_K\setminus S_K$ be above $p$ and $\sigma\in\mathrm{Gal}(K/\mathbb Q)$. Then the map $z\mapsto |\sigma(z)|_{\nu}$ is a $p$-adic absolute value of $K$. Thus, there exists $\nu'|p$ in $M_K^0$ such that $|\sigma(z)|_{\nu}=|z|_{\nu'}$ for all $z\in K$. Since $\nu'$ is above $p$ outside $S$ and $x$ is an $S$-unit, we have $|\sigma(x)|_{\nu} = |x|_{\nu'} = 1$ and hence, $\sigma(x)$ is also an $S$-unit.\end{proof}

\subsection{Archimedian part}
We deduce from the main result in \cite{Atoral} an equidistribution theorem in the form we will use later.

\begin{lemma}\label{lem:arch}Let $K\subseteq\mathbb{C}$ be a number field. Let $P\in K[X_1,\dots,X_n]$ be a polynomial and $Q=\prod_{\tau:K\to\mathbb{C}}\tau(P)$. Assume that $\tau(P)$ is essentially atoral for all $\tau\in\mathrm{Gal}(K/\mathbb{Q})$. Then there exists $\kappa>0$ with the following property. Let $\boldsymbol{\zeta}\in\mathbb{G}_m^n$ be of finite order and $\delta(\bzeta)$ large in terms of $P$. Then $P(\bzeta)\ne 0$ and $$\frac{1}{[K(\boldsymbol{\zeta}):\mathbb{Q}]}\sum_{\sigma:K(\boldsymbol{\zeta})\to \mathbb{C}} \log|\sigma(P(\boldsymbol{\zeta}))| = \frac{m(Q)}{[K:\mathbb{Q}]} + O_{K,P}(\delta(\boldsymbol\zeta)^{-\kappa})\text{ as }\delta(\boldsymbol{\zeta})\to \infty,$$ where the implied constant does not depend on $\boldsymbol{\zeta}$.\end{lemma}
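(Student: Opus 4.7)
The plan is to reduce the full average over embeddings of $K(\boldsymbol\zeta)$ into $\mathbb C$ to a family of partial Galois averages over the prime field $\mathbb Q$, and then invoke the refined version of Theorem \ref{thm:atoral} from \cite{Atoral}. Since $K(\boldsymbol\zeta)/\mathbb Q$ is separable, the embeddings $\sigma: K(\boldsymbol\zeta)\to\mathbb C$ group according to their restrictions to $K$, so
\begin{equation*}
\frac{1}{[K(\boldsymbol\zeta):\mathbb Q]}\sum_{\sigma:K(\boldsymbol\zeta)\to\mathbb C}\log|\sigma(P(\boldsymbol\zeta))|
=\frac{1}{[K:\mathbb Q]}\sum_{\tau:K\to\mathbb C}\frac{1}{[K(\boldsymbol\zeta):K]}\sum_{\substack{\sigma:K(\boldsymbol\zeta)\to\mathbb C\\ \sigma|_K=\tau}}\log|\tau(P)(\sigma(\boldsymbol\zeta))|,
\end{equation*}
using that $\sigma|_K=\tau$ gives $\sigma(P(\boldsymbol\zeta))=\tau(P)(\sigma(\boldsymbol\zeta))$.

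For each fixed $\tau$, pick an extension $\tilde\tau:K(\boldsymbol\zeta)\to\mathbb C$ and set $\boldsymbol\zeta'=\tilde\tau(\boldsymbol\zeta)$. Writing a general extension as $\tilde\tau\circ\rho$ with $\rho\in\mathrm{Gal}(K(\boldsymbol\zeta)/K)$, the inner sum becomes an average of $\log|\tau(P)(\sigma'(\boldsymbol\zeta'))|$ over $\sigma'$ ranging through $\mathrm{Gal}(\tilde\tau(K)(\boldsymbol\zeta')/\tilde\tau(K))$. Since $\mathbb Q(\boldsymbol\zeta')/\mathbb Q$ is Galois (cyclotomic), Theorem \ref{thm:lang} identifies this group with $G_\tau:=\mathrm{Gal}(\mathbb Q(\boldsymbol\zeta')/\mathbb Q(\boldsymbol\zeta')\cap\tilde\tau(K))\subseteq\mathrm{Gal}(\mathbb Q(\boldsymbol\zeta')/\mathbb Q)$ via restriction. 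Its index $[\mathrm{Gal}(\mathbb Q(\boldsymbol\zeta')/\mathbb Q):G_\tau]=[\mathbb Q(\boldsymbol\zeta')\cap\tilde\tau(K):\mathbb Q]$ is bounded by $[K:\mathbb Q]$, and its fixed field lies in $\tilde\tau(K)$, so by the archimedean analogue of Lemma \ref{lem:conductor} the conductor $f_{G_\tau}$ is bounded purely in terms of $K$.

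Since $\tau(P)$ is essentially atoral by hypothesis, we apply the refined archimedean equidistribution theorem of \cite{Atoral} (the analogue for $\mathbb C$ of Theorem \ref{thm:padic} proved above, in which the average over a subgroup $G$ is permitted with an error that is polynomial in $[\mathrm{Gal}:G]$ and $f_G^{1/2}$). Using $\delta(\boldsymbol\zeta')=\delta(\boldsymbol\zeta)$ since isomorphisms preserve multiplicative relations, and absorbing the bounded factors coming from the index and conductor into the implicit constant, this yields
\begin{equation*}
\frac{1}{|G_\tau|}\sum_{\sigma'\in G_\tau}\log|\tau(P)(\sigma'(\boldsymbol\zeta'))| = m(\tau(P)) + O_{K,P}\bigl(\delta(\boldsymbol\zeta)^{-\kappa}\bigr).
\end{equation*}
Summing over $\tau$ and using additivity of the Mahler measure, $\sum_{\tau:K\to\mathbb C} m(\tau(P)) = m\!\left(\prod_\tau\tau(P)\right)=m(Q)$, produces the claimed formula with the error $O_{K,P}(\delta(\boldsymbol\zeta)^{-\kappa})$ after division by $[K:\mathbb Q]$. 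Nonvanishing of $\sigma(P(\boldsymbol\zeta))=\tau(P)(\sigma(\boldsymbol\zeta))$ for $\delta(\boldsymbol\zeta)$ large follows in the same way as in the introduction: essential atorality of $\tau(P)$ combined with Manin-Mumford forces the torsion zeros of $\tau(P)$ into a finite union of torsion cosets, hence into a set of bounded $\delta$, and $\delta(\sigma(\boldsymbol\zeta))=\delta(\boldsymbol\zeta)$.

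The main obstacle is the necessity of invoking the refined archimedean equidistribution from \cite{Atoral}: a direct appeal to Theorem \ref{thm:atoral} as stated applies only when the field of definition of $P$ is $\mathbb Q$, so to handle a general number field $K$ one needs the version in which the average is over a subgroup of the Galois group and the error depends polynomially on the index and conductor of that subgroup. Given this refined input, the remainder of the argument is routine Galois theory and bookkeeping with the Mahler measure.
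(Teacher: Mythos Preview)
Your proposal is correct and follows essentially the same route as the paper's proof: decompose the average over embeddings of $K(\boldsymbol\zeta)$ according to restrictions $\tau:K\to\mathbb C$, apply the refined result of \cite{Atoral} (their Corollary~8.9, which is precisely the subgroup version you describe) to each $\tau(P)$ with the subgroup $G_\tau$ whose index and conductor are bounded in terms of $K$ alone, and then average using additivity of the Mahler measure. The paper writes the bijection $(\tau,\sigma)\mapsto\tilde\tau\sigma$ explicitly and uses a single $G=\mathrm{Gal}(\mathbb Q(\boldsymbol\zeta)/K\cap\mathbb Q(\boldsymbol\zeta))$ rather than your $\tau$-dependent $G_\tau$, but since $\mathbb Q(\boldsymbol\zeta')=\mathbb Q(\boldsymbol\zeta)$ and the bounds on index and conductor are uniform over conjugates of $K$, this is only a cosmetic difference.
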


\begin{proof}Suppose that $\boldsymbol{\zeta}\in\mathbb{G}_m^n$ has order $N$. Let $G= \mathrm{Gal}(\mathbb{Q}(\boldsymbol\zeta)/K\cap\mathbb{Q}(\boldsymbol{\zeta}))$. Then we can bound $[\mathrm{Gal}(\mathbb{Q}(\boldsymbol\zeta)/\mathbb{Q}):G]=[K\cap\mathbb{Q}(\boldsymbol{\zeta}):\mathbb{Q}]\leq[K:\mathbb{Q}]$. Since the conductor of $G$ depends only on its fixed field by Lemma \ref{lem:conductor}, which in this case is $K\cap\mathbb{Q}(\boldsymbol{\zeta})$, it is bounded from above solely in terms of $K$, since there are only finitely many subfields of $K$. Thus the nominator of the error term in Corollary 8.9 in \cite{Atoral} is bounded in terms of $K$, $\deg(P)$ and the absolute logarithmic Weil height $h(P)$ alone. Let $\tau:K\to\mathbb{C}$ and $\tilde{\tau}:K(\boldsymbol{\zeta})\to \mathbb{C}$ be an extension of $\tau$. An application of Corollary 8.9 to $\tau(P), G$ and $\tilde{\tau}(\boldsymbol{\zeta})$ yields the existence of $\kappa>0$ such that $$\frac{1}{\#G}\sum_{\sigma\in G}\log|\tau(P)(\tilde{\tau}(\boldsymbol{\zeta})^\sigma)|=m(\tau(P))+O_{K,P}(\delta(\boldsymbol\zeta)^{-\kappa})\text { as }\delta(\tilde{\tau}(\boldsymbol{\zeta}))=\delta(\boldsymbol{\zeta})\to \infty.$$ Since all conjugates of $P$ have the same degree and the same height, the error does not depend on $\tau$. Now we average over $\tau: K\to \mathbb{C}$. By the additivity of the Mahler measure, the right hand side equals $m(Q)/[K:\mathbb{Q}] + O_{K,P}(\delta(\boldsymbol\zeta)^{-\kappa})$. We claim that the left hand side is given by $$\frac{1}{[K(\boldsymbol{\zeta}):\mathbb{Q}]}\sum_{\sigma:K(\boldsymbol{\zeta})\to \mathbb{Q}}\log|\sigma(P(\boldsymbol{\zeta}))|.$$ By Theorem \ref{thm:lang} $\mathrm{Gal}(K(\boldsymbol{\zeta})/K)$ is isomorphic to $G$ and hence $[K:\mathbb{Q}] \#G=[K(\boldsymbol{\zeta}):\mathbb{Q}].$ For all $\tau: K\to \mathbb{C}$ let us fix an extension $\tilde{\tau}:K(\boldsymbol{\zeta})\to \mathbb{C}$. We claim that $(\tau,\sigma)\mapsto \tilde{\tau}\sigma$ is a bijection between $\{\tau: K\to \mathbb{C}\} \times \mathrm{Gal}(K(\boldsymbol{\zeta})/K)$: Surely the map is well defined. Assume that $\tilde{\tau}\sigma=\tilde{\tau}'\sigma'$. If we restrict to $K$ we find $\tau=\tau'$ and hence $\tilde{\tau}=\tilde{\tau}'$. Since $\tilde\tau$ is injective we also have $\sigma=\sigma'$. Thus the map is injective. The two sets have the same finite size $[K(\boldsymbol{\zeta}):K][K:\mathbb{Q}]=[K(\boldsymbol{\zeta}):\mathbb{Q}]$ and hence any injection is also a bijection. Therefore we have \begin{align*}\sum_{\tau: K\to \mathbb{C}}\sum_{\sigma\in G}\log|\tau(P)(\tilde{\tau}(\boldsymbol{\zeta})^\sigma)|&=\sum_{\tau:K\to \mathbb{C}}\sum_{\sigma\in \mathrm{Gal}(K(\boldsymbol{\zeta})/K)}\log|\tilde{\tau}\sigma(P(\boldsymbol{\zeta}))|\\
&=\sum_{\sigma: K(\boldsymbol{\zeta})\to \mathbb{C}} \log|\sigma(P(\boldsymbol{\zeta}))|\end{align*} using again that $\tilde{\tau}\sigma$ restricts to $\tau$ on $K$ and that $\tilde{\tau}\sigma(\boldsymbol\zeta)=\sigma\tilde{\tau}(\boldsymbol\zeta).$\end{proof}

\subsection{$p$-adic part}
We apply our main result to state an equidistribution result for $p$-adic absolute values.
\begin{lemma}\label{lem:embed}Let $K$ be a number field and $\nu\in M^0(K)$ a finite place of it which lies above the rational prime $p$. Then there is an embedding $\iota:K\to \overline{\mathbb{Q}_p}$ such that $|x|_\nu=|\iota(x)|_p$ for all $x\in K$.\end{lemma}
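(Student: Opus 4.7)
The plan is to exploit the completion $K_\nu$ of $K$ at $\nu$ and the uniqueness of the extension of the $p$-adic absolute value to finite extensions of $\mathbb{Q}_p$. First I would form the completion $K_\nu$ and recall that, because $\nu$ is a non-archimedean place lying over $p$, the field $K_\nu$ is a finite extension of $\mathbb{Q}_p$ of degree $d_\nu=e(\nu)f(\nu)$, with the topology on $K_\nu$ induced by the unique extension of $|\cdot|_p$ (which is precisely $|\cdot|_\nu$). This gives a canonical inclusion $j\colon K\hookrightarrow K_\nu$ that is an isometry for $|\cdot|_\nu$ on both sides.

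Next I would embed $K_\nu$ into $\overline{\mathbb{Q}_p}$. Since $K_\nu/\mathbb{Q}_p$ is a finite, hence algebraic, extension and $\overline{\mathbb{Q}_p}$ is an algebraic closure of $\mathbb{Q}_p$, a standard result in field theory provides a $\mathbb{Q}_p$-algebra embedding $\tilde\iota\colon K_\nu\hookrightarrow \overline{\mathbb{Q}_p}$. I would then set $\iota=\tilde\iota\circ j\colon K\to \overline{\mathbb{Q}_p}$.

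To verify the isometry $|x|_\nu=|\iota(x)|_p$ for all $x\in K$, I would invoke the uniqueness of the extension of a complete non-archimedean absolute value to an algebraic extension. Concretely, both $|\cdot|_\nu$ on $K_\nu$ and the pullback $|\tilde\iota(\cdot)|_p$ on $K_\nu$ extend the $p$-adic absolute value on $\mathbb{Q}_p$ to the finite extension $K_\nu/\mathbb{Q}_p$; by uniqueness they must coincide on $K_\nu$, and restricting to $K$ via $j$ gives the claim.

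There is essentially no obstacle; the proof is a direct unpacking of standard facts about completions of number fields and the unique extension of non-archimedean absolute values to algebraic extensions of complete fields. The only point to state carefully is that $\overline{\mathbb{Q}_p}$ is an algebraic closure of $\mathbb{Q}_p$ so that any finite extension of $\mathbb{Q}_p$ embeds into it, and that uniqueness of the extension of $|\cdot|_p$ on $\mathbb{Q}_p$ to $K_\nu$ forces the embedding $\tilde\iota$ to be an isometry regardless of which embedding was chosen.
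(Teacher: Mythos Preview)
Your proof is correct and is essentially the standard argument underlying the result the paper invokes: the paper simply cites Neukirch, Theorem~II.(8.1), whereas you spell out its content by passing through the completion $K_\nu$, embedding it into $\overline{\mathbb{Q}_p}$, and appealing to uniqueness of the extension of $|\cdot|_p$ to finite extensions of $\mathbb{Q}_p$. There is no substantive difference in approach.
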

\begin{proof}Clearly $K/\mathbb{Q}$ is algebraic and $|\cdot|_\nu$ extends $|\cdot|_p$. So by Theorem II.(8.1) in \cite{Neukirch} there exists a $\mathbb{Q}$-emedding $\iota:K\to\overline{\mathbb{Q}_p}$ such that $|x|_\nu=|\iota(x)|_p$ for all $x\in K$.\end{proof}

\begin{definition}Let $K$ be a number field and $P\in K[X_1,\dots,X_n]$. We define $$C(K,P,p)=-\log\left(\frac{1}{\max_{\iota:K\to\overline{\mathbb{Q}_p}}|\iota(P)|_p}\min_{\iota:K\to\overline{\mathbb{Q}_p}}\inf(\iota(P))\right).$$\end{definition}

\begin{lemma}\label{lem:CP}Let $K\subseteq\overline{\mathbb{Q}}\subseteq\mathbb{C}$ be a number field and $P\in K[X_1,\dots, X_n]\setminus\{0\}$. Let $\iota:K \to \overline{\mathbb{Q}_p}$. Then we have ${c}(\iota(P))\leq C(K,P,p)$.\end{lemma}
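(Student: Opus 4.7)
The plan is to unpack both definitions and observe that the inequality reduces to two monotonicity statements, each of which is immediate from the definition of maximum and minimum.

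First, I would rewrite $c(\iota(P))$ using Lemma \ref{lem:inf} as
\begin{equation*}
c(\iota(P)) = \log|\iota(P)|_p - \log\inf(\iota(P)),
\end{equation*}
and similarly expand
\begin{equation*}
C(K,P,p) = \log\Bigl(\max_{\iota':K\to\overline{\mathbb{Q}_p}}|\iota'(P)|_p\Bigr) - \log\Bigl(\min_{\iota':K\to\overline{\mathbb{Q}_p}}\inf(\iota'(P))\Bigr).
\end{equation*}
Both quantities make sense: $\iota(P)\ne 0$ because $\iota$ is a field embedding, so $|\iota(P)|_p>0$ and $\inf(\iota(P))>0$ by Theorem \ref{thm:TV}, and the same holds for every $\iota'$.

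Next I would verify the two term-by-term inequalities separately. Since $\iota$ itself is one of the embeddings appearing in the maximum, we have $|\iota(P)|_p \le \max_{\iota'}|\iota'(P)|_p$, so
\begin{equation*}
\log|\iota(P)|_p \le \log\Bigl(\max_{\iota'}|\iota'(P)|_p\Bigr).
\end{equation*}
Similarly $\inf(\iota(P)) \ge \min_{\iota'}\inf(\iota'(P))$, so $-\log\inf(\iota(P))\le -\log\min_{\iota'}\inf(\iota'(P))$. Adding these two inequalities gives exactly $c(\iota(P)) \le C(K,P,p)$.

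There is essentially no obstacle: the entire content is the observation that for each embedding $\iota$ the ratio $|\iota(P)|_p^{-1}\inf(\iota(P))$ is at least the ratio $(\max_{\iota'}|\iota'(P)|_p)^{-1}\min_{\iota'}\inf(\iota'(P))$ in which the numerator is minimised and the denominator maximised over all embeddings simultaneously, and then one takes $-\log$ of both sides, reversing the inequality.
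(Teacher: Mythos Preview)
Your proof is correct and follows exactly the same approach as the paper, which simply unpacks the definition of $c(\iota(P))$ and observes that the ratio $|\iota(P)|_p^{-1}\inf(\iota(P))$ is bounded below by the corresponding ratio with the maximum and minimum over all embeddings. Your write-up is just a more detailed version of the paper's one-line proof.
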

\begin{proof}We have ${c}(\iota(P))=-\log\left(\frac{1}{|\iota(P)|_p}\inf(\iota(P))\right)\leq C(K,P,p)$.\end{proof}

\begin{lemma}\label{lem:numbField}Let $K\subseteq\overline{\mathbb{Q}}$ be a number field and $P\in K[X_1,\dots, X_n]\setminus\{0\}$ a polynomial. Then there exist constants $C=C(n,P)\geq 1$ and $\kappa=\kappa(n)>0$ with the following property: Let $\boldsymbol{\zeta}\in \mathbb{G}_m^n$ have order $N$ and let $G<\mathrm{Gal}(\mathbb{Q}(\boldsymbol{\zeta})/\mathbb{Q})$. Let $\nu\in M^0({K(\boldsymbol{\zeta})})$ be a finite place over $p$. If $\delta(\boldsymbol{\zeta})> C$ then $P(\sigma(\boldsymbol{\zeta}))\ne 0$ for all $\sigma\in G$ and $$\frac{1}{\#G}\sum_{\sigma\in G} \log |P(\sigma(\boldsymbol{\zeta}))|_\nu = \log |P|_\nu + O_{n}\left(\frac{\deg(P)[\mathrm{Gal}(\mathbb{Q}(\boldsymbol{\zeta})/\mathbb{Q}):G]^2f_G^{1/2}\left(1+C(K,P,p)\right)}{\delta(\boldsymbol{\zeta})^\kappa}\right)$$ as $\delta(\bzeta)\to\infty$. The implied constant does neither depend on $\nu$ nor on $p$.\end{lemma}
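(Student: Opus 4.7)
The plan is to reduce Lemma \ref{lem:numbField} to Theorem \ref{thm:padic} via an embedding of $K(\boldsymbol{\zeta})$ into $\mathbb{C}_p$ realizing the place $\nu$. By Lemma \ref{lem:embed} applied to the number field $K(\boldsymbol{\zeta})$ and the finite place $\nu$, there is an embedding $\iota: K(\boldsymbol{\zeta}) \hookrightarrow \overline{\mathbb{Q}_p} \subseteq \mathbb{C}_p$ with $|x|_\nu = |\iota(x)|_p$ for all $x \in K(\boldsymbol{\zeta})$. Put $\boldsymbol{\eta} = \iota(\boldsymbol{\zeta}) \in \mathbb{G}_m^n(\mathbb{C}_p)$. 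The restriction $\iota|_{\mathbb{Q}(\boldsymbol{\zeta})}$ is a $\mathbb{Q}$-isomorphism onto $\mathbb{Q}(\boldsymbol{\eta})$, and conjugation by it induces an isomorphism $\Phi: \mathrm{Gal}(\mathbb{Q}(\boldsymbol{\zeta})/\mathbb{Q}) \xrightarrow{\sim} \mathrm{Gal}(\mathbb{Q}(\boldsymbol{\eta})/\mathbb{Q})$. Setting $\tilde G = \Phi(G)$, since $\iota$ is a ring homomorphism one checks that $\boldsymbol{\eta}$ has order $N$, $\delta(\boldsymbol{\eta}) = \delta(\boldsymbol{\zeta})$, and $\iota(\sigma(\boldsymbol{\zeta})) = \Phi(\sigma)(\boldsymbol{\eta})$ for every $\sigma \in G$.

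Next I secure non-vanishing. By Manin--Mumford for $\mathbb{G}_m^n$ over $\mathbb{C}$, proven by Laurent, and the short argument in the introduction, the torsion points in the zero locus of $P$ have $\delta$ bounded by some $D(P)$. Since Galois conjugates share the same multiplicative relations, hence the same $\delta$, taking $C(n,P) > D(P)$ ensures $P(\sigma(\boldsymbol{\zeta})) \neq 0$ for every $\sigma \in G$.

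Now I apply Theorem \ref{thm:padic} to $\iota(P) \in \mathbb{C}_p[X_1,\ldots,X_n]$, the torsion point $\boldsymbol{\eta}$, and $\tilde G$. Enlarging $C(n,P)$ further so that $C(n,P) \geq C(n)\deg(P)^{C(n)}$ with $C(n)$ the constant from that theorem, the hypothesis is met and yields
$$\frac{1}{\#\tilde G}\sum_{\tilde\sigma\in\tilde G}\log|\iota(P)(\tilde\sigma(\boldsymbol{\eta}))|_p = \log|\iota(P)|_p + O_n\!\left(\frac{\deg(P)\,[\mathrm{Gal}(\mathbb{Q}(\boldsymbol{\eta})/\mathbb{Q}):\tilde G]^2\, f_{\tilde G}^{1/2}\,(1+c(\iota(P)))}{\delta(\boldsymbol{\eta})^{\kappa}}\right).$$

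The remaining work is to translate each quantity back via $\iota$: the bijection $G \leftrightarrow \tilde G$ gives equal cardinalities and equal indices in the respective ambient Galois groups; the fixed field of $\tilde G$ is the $\iota$-image of the fixed field of $G$, and since $\iota$ preserves orders of roots of unity, Lemma \ref{lem:conductor} gives $f_{\tilde G} = f_G$; applying $\iota$ coefficient-wise yields $|\iota(P)|_p = |P|_\nu$ and $|\iota(P)(\tilde\sigma(\boldsymbol{\eta}))|_p = |P(\sigma(\boldsymbol{\zeta}))|_\nu$; Lemma \ref{lem:CP} supplies $c(\iota(P)) \leq C(K,P,p)$. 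Since the implied constant in Theorem \ref{thm:padic} depends only on $n$, the final estimate is independent of the choice of $\nu$ and $p$. There is no real obstacle beyond Theorem \ref{thm:padic} itself; the only care needed is in verifying that the Galois-theoretic invariants (order, $\delta$, index, conductor, Gauss norm) all transport cleanly under the $\mathbb{Q}$-isomorphism $\iota|_{\mathbb{Q}(\boldsymbol{\zeta})}$.
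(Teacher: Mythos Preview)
Your proof is correct and follows essentially the same approach as the paper: embed $K(\boldsymbol{\zeta})$ into $\mathbb{C}_p$ via Lemma \ref{lem:embed}, transport $G$ to a subgroup $\tilde G$ of $\mathrm{Gal}(\mathbb{Q}(\iota(\boldsymbol{\zeta}))/\mathbb{Q})$ by conjugation, apply Theorem \ref{thm:padic} to $\iota(P)$, and then translate all the invariants (index, conductor, $\delta$, Gauss norm, $c(\cdot)$) back through $\iota$, bounding $c(\iota(P))$ by $C(K,P,p)$ via Lemma \ref{lem:CP}. The only cosmetic difference is notation; the logical structure and the ingredients used are identical.
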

\begin{proof}Let $\boldsymbol{\zeta}\in\mathbb{G}_m^n$ be of finite order. By the Manin-Mumford-Conjecture for $\mathbb G_m^n$ \cite{Laurent} the set of $\delta(\boldsymbol{\zeta})$ such that $P(\boldsymbol{\zeta})=0$ is bounded by $M=M(P,n)$ say. Let $C(n),\kappa=\kappa(n)$ be the constants from Theorem \ref{thm:padic} and define $C=\max\{M,C(n)\deg(P)^{C(n)}\}$. Now let $\boldsymbol{\zeta}\in \mathbb{G}_m^n$ have order $N$ and satisfy $\delta(\boldsymbol{\zeta})>C$. Let $G<\mathrm{Gal}(\mathbb{Q}(\boldsymbol{\zeta})/\mathbb{Q})$ and $\nu\in M^0({K(\boldsymbol{\zeta})})$ be a finite place. By Lemma \ref{lem:embed} there exists an embedding $\iota:K(\boldsymbol{\zeta})\to \overline{\mathbb{Q}_p}$ such that $|x|_\nu=|\iota(x)|_p$ for all $x\in K(\boldsymbol{\zeta})$. Let $F=\iota(P)\in\overline{\mathbb{Q}_p}[X_1,\dots,X_n]\subseteq \mathbb{C}_p[X_1,\dots,X_n].$ The embedding $\iota$ induces an isomorphism between $\mathbb{Q}(\boldsymbol{\zeta})$ and $L=\iota(\mathbb{Q}(\boldsymbol{\zeta}))$. Let $H=\{\iota\circ\sigma\circ\iota^{-1}: \sigma\in G\}$. Especially for all $\tilde{\sigma}\in H$ there is $\sigma\in G$ such that we have $\tilde{\sigma}(\iota(x))=\iota(\sigma(x))$ for all $x\in\mathbb{Q}(\boldsymbol{\zeta})$. Thus for all $\tilde{\sigma}\in H$ we find $$F(\tilde{\sigma}(\iota(\boldsymbol{\zeta})))=\iota(P)(\iota(\sigma(\boldsymbol{\zeta})))=\iota(P(\sigma(\boldsymbol{\zeta})))\neq 0$$ because $\delta(\sigma(\boldsymbol{\zeta}))=\delta(\boldsymbol{\zeta})>C\geq M$. Then by Theorem \ref{thm:padic} applied to $\iota(\boldsymbol{\zeta})\in\mathbb{G}_m^n(\mathbb{C}_p),\; H$ and $F$ we have $$\frac{1}{\#H}\sum_{\tilde{\sigma}\in H} \log |F(\tilde{\sigma}(\iota(\boldsymbol{\zeta})))|_p = \log |F|_p + O_{n}\left(\frac{\deg(F)[\mathrm{Gal}(\mathbb{Q}(\iota(\boldsymbol{\zeta}))/\mathbb{Q}):H]^2f_H^{1/2}\left(1+c(F)\right)}{\delta(\iota(\boldsymbol{\zeta}))^\kappa}\right).$$ Since $K(\boldsymbol\zeta)/K$ is a Galois extension $P(\sigma(\boldsymbol{\zeta}))$ lies in $K(\boldsymbol{\zeta})$ and thus $|F(\tilde{\sigma}(\iota(\boldsymbol{\zeta})))|_p=|\iota(P)(\iota(\sigma(\boldsymbol{\zeta})))|_p=|\iota(P(\sigma(\boldsymbol{\zeta})))|_p=|P(\sigma(\boldsymbol{\zeta}))|_\nu$. We also have $|F|_p=|P|_\nu$. Clearly the index and the conductor of $G$ and $H$ are the same and $c(F)\leq C(K,P,p)$ by Lemma \ref{lem:CP}. Finally since $\iota(\boldsymbol{\zeta})$ satisfies the same multiplicative relations as $\boldsymbol{\zeta}$ we have $\delta(\iota(\boldsymbol{\zeta}))=\delta(\boldsymbol{\zeta})$. Hence we find $$\frac{1}{\#G}\sum_{\sigma\in G} \log |P(\sigma(\boldsymbol{\zeta}))|_\nu = \log |P|_\nu + O_{n}\left(\frac{\deg(P)[\mathrm{Gal}(\mathbb{Q}(\boldsymbol{\zeta})/\mathbb{Q}):G]^2f_G^{1/2}\left(1+C(K,P,p)\right)}{\delta(\boldsymbol{\zeta})^\kappa}\right).\qedhere$$\end{proof}

\begin{lemma}\label{lem:pad}Let $K\subseteq\mathbb{C}$ be a number field, $P\in K[X_1,\dots,X_n]\setminus\{0\}$ a polynomial and $Q=\prod_{\tau: K\to \mathbb{C}}\tau(P)$. Then there exists $\kappa>0$ with the following property. Let $\boldsymbol{\zeta}\in\mathbb{G}_m^n$ be of finite order, $L$ the Galois closure of $K$ and $\nu\in M^0(L(\boldsymbol\zeta))$ a place above $p$. If $\delta(\bzeta)$ is large in terms of $P$ we have $P(\bzeta)\ne 0$ and $$\frac{1}{[K(\boldsymbol{\zeta}):\mathbb{Q}]}\sum_{\sigma:K(\boldsymbol{\zeta})\to\mathbb{C}} \log|\sigma(P(\boldsymbol{\zeta}))|_\nu = \frac{\log |Q|_\nu}{[K:\mathbb{Q}]} + O_{K,P,p}(\delta(\boldsymbol\zeta)^{-\kappa})\text{ as }\delta(\boldsymbol{\zeta})\to \infty,$$ where the implied constant does not depend on $\boldsymbol{\zeta}$.\end{lemma}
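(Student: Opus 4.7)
The plan is to mirror the archimedean proof of Lemma \ref{lem:arch}: apply the $p$-adic equidistribution Lemma \ref{lem:numbField} to each conjugate $\tau(P)$ after a Galois-theoretic change of place, sum over $\tau$, and use multiplicativity of the Gauss norm to replace $\sum_\tau\log|\tau(P)|_\nu$ by $\log|Q|_\nu$. Non-vanishing $P(\boldsymbol\zeta)\ne 0$ for $\delta(\boldsymbol\zeta)$ sufficiently large follows from the Manin--Mumford Conjecture for $\mathbb G_m^n$, exactly as in Lemmas \ref{lem:numbField} and \ref{lem:arch}.

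The starting observation is that, because $L(\boldsymbol\zeta)/\mathbb Q$ is Galois and $P(\boldsymbol\zeta)\in K(\boldsymbol\zeta)$, the average over embeddings of $K(\boldsymbol\zeta)$ can be rewritten as the average over $\mathrm{Gal}(L(\boldsymbol\zeta)/\mathbb Q)$:
\[
\frac{1}{[K(\boldsymbol\zeta):\mathbb Q]}\sum_{\sigma:K(\boldsymbol\zeta)\to\mathbb C}\log|\sigma(P(\boldsymbol\zeta))|_\nu
=\frac{1}{[L(\boldsymbol\zeta):\mathbb Q]}\sum_{\hat\sigma\in\mathrm{Gal}(L(\boldsymbol\zeta)/\mathbb Q)}\log|\hat\sigma(P(\boldsymbol\zeta))|_\nu.
\]
Grouping $\hat\sigma$ by $\tau:=\hat\sigma|_K$ and fixing one lift $\hat\tau\in\mathrm{Gal}(L(\boldsymbol\zeta)/\mathbb Q)$ of each $\tau$, every $\hat\sigma$ with $\hat\sigma|_K=\tau$ factors uniquely as $\hat\tau\rho$ for some $\rho\in\mathrm{Gal}(L(\boldsymbol\zeta)/K)$. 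Since $\rho$ fixes the coefficients of $P$, we obtain $\hat\sigma(P(\boldsymbol\zeta))=\hat\tau(P(\rho(\boldsymbol\zeta)))$, and with the pulled-back place $\nu_\tau$ on $L(\boldsymbol\zeta)$ defined by $|x|_{\nu_\tau}=|\hat\tau(x)|_\nu$ this gives $|\hat\sigma(P(\boldsymbol\zeta))|_\nu=|P(\rho(\boldsymbol\zeta))|_{\nu_\tau}$. The subgroup $\mathrm{Gal}(L(\boldsymbol\zeta)/K(\boldsymbol\zeta))\le\mathrm{Gal}(L(\boldsymbol\zeta)/K)$ fixes $\boldsymbol\zeta$, so the inner sum collapses to $[L(\boldsymbol\zeta):K(\boldsymbol\zeta)]\sum_{\psi\in G_K}\log|P(\psi(\boldsymbol\zeta))|_{\nu_\tau}$, where $G_K:=\mathrm{Gal}(\mathbb Q(\boldsymbol\zeta)/K\cap\mathbb Q(\boldsymbol\zeta))$ is identified with $\mathrm{Gal}(K(\boldsymbol\zeta)/K)$ via Theorem \ref{thm:lang}.

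Now I apply Lemma \ref{lem:numbField} to $P$, the subgroup $G_K<\mathrm{Gal}(\mathbb Q(\boldsymbol\zeta)/\mathbb Q)$ and the finite place $\nu_\tau|_{K(\boldsymbol\zeta)}\in M^0(K(\boldsymbol\zeta))$ (still above $p$ since $\hat\tau$ fixes $\mathbb Q$), obtaining $\frac{1}{\#G_K}\sum_{\psi\in G_K}\log|P(\psi(\boldsymbol\zeta))|_{\nu_\tau}=\log|P|_{\nu_\tau}+O_{K,P,p}(\delta(\boldsymbol\zeta)^{-\kappa})$. Inspecting the definition of the Gauss norm coefficient-by-coefficient gives $|P|_{\nu_\tau}=|\tau(P)|_\nu$. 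The error is uniform in $\tau$ and $\nu$ because $[\mathrm{Gal}(\mathbb Q(\boldsymbol\zeta)/\mathbb Q):G_K]=[K\cap\mathbb Q(\boldsymbol\zeta):\mathbb Q]\le[K:\mathbb Q]$, the conductor $f_{G_K}$ depends only on the fixed field $K\cap\mathbb Q(\boldsymbol\zeta)$ (Lemma \ref{lem:conductor}) of which there are finitely many in $K$, and $C(K,P,p)$ depends only on $K,P,p$. Summing over the $[K:\mathbb Q]$ embeddings $\tau$, using $\log|Q|_\nu=\sum_\tau\log|\tau(P)|_\nu$ by multiplicativity of the non-archimedean Gauss norm, and substituting $[L(\boldsymbol\zeta):\mathbb Q]=[K:\mathbb Q]\cdot\#G_K\cdot[L(\boldsymbol\zeta):K(\boldsymbol\zeta)]$ yields the claimed asymptotic after dividing.

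The main obstacle is the Galois-theoretic bookkeeping in the second paragraph: correctly matching the unconstrained embedding sum over $K(\boldsymbol\zeta)$ against the inner Galois orbit sums weighted by the pulled-back places $\nu_\tau$, and verifying that the error constants appearing in Lemma \ref{lem:numbField} remain uniform as $\tau$ and $\nu$ vary. Once these compatibilities are in place, the conclusion is a direct consequence of the already established $p$-adic equidistribution.
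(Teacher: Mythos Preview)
Your argument is correct and follows essentially the same strategy as the paper: bound $[\mathrm{Gal}(\mathbb Q(\boldsymbol\zeta)/\mathbb Q):G_K]$ and $f_{G_K}$ in terms of $K$, apply Lemma \ref{lem:numbField} once for each embedding $\tau:K\to\mathbb C$, then average and invoke multiplicativity of the Gauss norm to obtain $\log|Q|_\nu/[K:\mathbb Q]$. The only cosmetic difference is that the paper applies Lemma \ref{lem:numbField} to the conjugated polynomial $\tau(P)$ at the fixed place $\nu$, whereas you apply it to $P$ at the pulled-back place $\nu_\tau$; since $|P|_{\nu_\tau}=|\tau(P)|_\nu$ these are two ways of writing the same computation, and the Galois bookkeeping you spell out via the chain $\mathrm{Gal}(L(\boldsymbol\zeta)/K)\twoheadrightarrow G_K$ with kernel $\mathrm{Gal}(L(\boldsymbol\zeta)/K(\boldsymbol\zeta))$ is exactly the content of the bijection the paper quotes from the proof of Lemma \ref{lem:arch}.
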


\begin{proof}Let $G=\mathrm{Gal}(\mathbb{Q}(\boldsymbol\zeta)/K\cap\mathbb{Q}(\boldsymbol{\zeta}))$. Then $[\mathrm{Gal}(\mathbb{Q}(\boldsymbol\zeta)/\mathbb{Q}):G]=[K\cap\mathbb{Q}(\boldsymbol{\zeta}):\mathbb{Q}]\leq[K:\mathbb{Q}]$. Since the conductor of $G$ depends only on its fixed field by Lemma \ref{lem:conductor}, which in this case is $K\cap\mathbb{Q}(\boldsymbol{\zeta})$, it is bounded from above solely in terms of $K$, since there are only finitely many subfields of $K$. Thus the nominator of the error term in Lemma \ref{lem:numbField} is bounded from above in terms of $K, \deg(P)$ and $C(K,P,p)$. Let $\tau: K\to \mathbb{C}$ and $\tilde{\tau}: K(\boldsymbol{\zeta})\to \mathbb{Q}$ be an extension of $\tau$. An application of Lemma \ref{lem:numbField} to $\tau(P), G$ and $\tilde{\tau}(\boldsymbol{\zeta})$ yields the existence of $\kappa>0$ such that $$\frac{1}{\#G}\sum_{\sigma\in G}\log|\tau(P)(\sigma\tilde{\tau}(\boldsymbol{\zeta}))|_\nu=\log |\tau(P)|_\nu+O_{\tau(K),P}\left(\frac{C(\tau(K),\tau(P),p)}{\delta(\boldsymbol\zeta)^{\kappa}}\right)\text { as }\delta(\tilde{\tau}(\boldsymbol{\zeta}))=\delta(\boldsymbol{\zeta})\to \infty.$$ Now we average over $\tau: K\to \mathbb{C}$. By the multiplicity of the Gauss norm, the right hand side equals $\log |Q|_\nu/[K:\mathbb{Q}] + O_{K,P,p}\left({\delta(\boldsymbol\zeta)^{-\kappa}}\right)$. By the exactly same argument as in the proof of Lemma \ref{lem:arch} we find that the left hand side is given by $$\frac{1}{[K(\boldsymbol{\zeta}):\mathbb{Q}]}\sum_{\sigma: K(\boldsymbol{\zeta})\to \mathbb{C}}\log|\sigma(P(\boldsymbol{\zeta}))|_\nu.\qedhere$$\end{proof}

\begin{lemma}\label{lem:Sint}Let $K\subseteq \mathbb C$ be a number field and $S$ a finite set of primes. Let $P\in K[X_1^{\pm 1}, \dots, X_n^{\pm 1}]$ be such that $\{\delta(\bzeta): \bzeta\in \mu_{\infty}^n,\; P(\bzeta)\text{ is an }S\text{-unit}\}$ is unbounded. Then $|P|_\nu = 1$ for every $\nu\in M_K\setminus S_K$. In particular, the coefficients of $P$ are $S$-integers.\end{lemma}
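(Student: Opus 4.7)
My plan is to apply Lemma \ref{lem:pad} together with Gauss's lemma in the Galois closure $L$ of $K/\mathbb{Q}$ and a Galois-transitivity argument: first I upgrade the $S$-unit hypothesis to the statement $|Q|_p=1$ for the norm polynomial $Q=\prod_{\tau:K\to\mathbb{C}}\tau(P)$, and then I descend from $|Q|_p=1$ to $|P|_\nu=1$ using multiplicativity of the Gauss norm and the fact that $\mathrm{Gal}(L/\mathbb{Q})$ acts transitively on primes of $L$ above $p$.

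First I would fix a prime $p\notin S$, a sequence of torsion points $(\boldsymbol{\zeta}_k)$ with $\delta(\boldsymbol{\zeta}_k)\to\infty$ and $P(\boldsymbol{\zeta}_k)$ an $S$-unit, and any place $\nu_k\in M^0(L(\boldsymbol{\zeta}_k))$ above $p$. By Lemma \ref{lem:conjSunit} every conjugate $\sigma(P(\boldsymbol{\zeta}_k))$ for $\sigma:K(\boldsymbol{\zeta}_k)\to\mathbb{C}$ is an $S$-unit; since such a conjugate lies in $L(\boldsymbol{\zeta}_k)$ and $\nu_k$ lies above $p\notin S$, I get $|\sigma(P(\boldsymbol{\zeta}_k))|_{\nu_k}=1$. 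Hence the left-hand side of Lemma \ref{lem:pad} vanishes, so $\log|Q|_{\nu_k}=O_{K,P,p}(\delta(\boldsymbol{\zeta}_k)^{-\kappa})$. The coefficients of $Q$ are fixed by $\mathrm{Gal}(\overline{\mathbb{Q}}/\mathbb{Q})$, so $Q\in\mathbb{Q}[X_1^{\pm 1},\dots,X_n^{\pm 1}]$; consequently $|Q|_{\nu_k}$ coincides with the ordinary $p$-adic Gauss norm of $Q$ and does not depend on $k$. Letting $k\to\infty$ yields $|Q|_p=1$.

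Second, I apply Gauss's lemma for Laurent polynomials in $L[X_1^{\pm 1},\dots,X_n^{\pm 1}]$ to an arbitrary place $\mathfrak{P}$ of $L$ above $p$:
$$1=|Q|_{\mathfrak{P}}=\prod_{\tau:K\to L}|\tau(P)|_{\mathfrak{P}}=\prod_\tau|P|_{\mathfrak{p}_\tau},$$
where $\mathfrak{p}_\tau$ is the place of $K$ given by $|x|_{\mathfrak{p}_\tau}=|\tau(x)|_{\mathfrak{P}}$. Because the components of $\boldsymbol{\zeta}_k$ are roots of unity and $P(\boldsymbol{\zeta}_k)$ is an $S$-unit, the ultrametric inequality gives $1=|P(\boldsymbol{\zeta}_k)|_{\tilde{\mathfrak{p}}_\tau}\le|P|_{\mathfrak{p}_\tau}$ for any extension $\tilde{\mathfrak{p}}_\tau$ of $\mathfrak{p}_\tau$ to $K(\boldsymbol{\zeta}_k)$. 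Hence every factor in the displayed product is at least $1$, and since the product equals $1$ each factor must equal $1$. Because $L/\mathbb{Q}$ is Galois and $\mathrm{Gal}(L/\mathbb{Q})$ acts transitively on primes of $L$ above $p$, the pullback places $\mathfrak{p}_\tau$ range over every prime of $K$ above $p$. Letting $p\in\mathbb{P}\setminus S$ vary then gives $|P|_\nu=1$ for every $\nu\in M_K\setminus S_K$.

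The $S$-integrality of the coefficients follows immediately: for any coefficient $p_\alpha$ of $P$ and any $\nu\in M_K\setminus S_K$ one has $|p_\alpha|_\nu\le|P|_\nu=1$, so $p_\alpha\in\mathbb{Z}_{K,S_K}$. I expect the main obstacle to be the descent step: combining Gauss's lemma with the lower bound $|P|_{\mathfrak{p}_\tau}\ge 1$ and Galois transitivity in order to transfer the information from $|Q|_p=1$ to each individual $|P|_\nu$ above $p$.
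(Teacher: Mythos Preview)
Your argument is correct, but it takes a longer route than the paper's. The paper applies Lemma \ref{lem:numbField} directly to $P$, the place $\nu_0\in M_K\setminus S_K$, and the subgroup $G=\mathrm{Gal}(\mathbb{Q}(\boldsymbol{\zeta})/K\cap\mathbb{Q}(\boldsymbol{\zeta}))\cong\mathrm{Gal}(K(\boldsymbol{\zeta})/K)$: since every $K$-conjugate $\sigma(P(\boldsymbol{\zeta}))$ is an $S$-unit, the average $\frac{1}{\#G}\sum_{\sigma\in G}\log|P(\sigma(\boldsymbol{\zeta}))|_\nu$ vanishes, and the equidistribution estimate gives $\log|P|_{\nu_0}=O(\delta(\boldsymbol{\zeta})^{-\kappa})$, hence $|P|_{\nu_0}=1$ immediately. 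No passage through the norm polynomial $Q$ and no descent step are needed.

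Your approach instead invokes the packaged Lemma \ref{lem:pad}, which already averages over all embeddings $K\to\mathbb{C}$ and therefore naturally outputs information about $Q$ rather than $P$; you then recover $|P|_\nu=1$ via Gauss's lemma, the trivial lower bound $|P|_{\mathfrak{p}_\tau}\ge |P(\boldsymbol{\zeta}_k)|_{\tilde{\mathfrak{p}}_\tau}=1$, and the Galois-transitivity argument on primes of $L$ above $p$. This is perfectly valid (and your transitivity claim is correct: extending $\tau$ to $\sigma\in\mathrm{Gal}(L/\mathbb{Q})$ shows $\mathfrak{p}_\tau$ is the restriction of $\sigma^{-1}\mathfrak{P}$, so every prime of $K$ above $p$ is hit). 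The tradeoff is that you use a less flexible lemma and then compensate with an extra algebraic step, whereas the paper exploits the freedom in Lemma \ref{lem:numbField} to choose the subgroup $G$ adapted to $K$ and lands on $|P|_\nu$ in one stroke. One minor point: since Lemma \ref{lem:pad} is stated for polynomials, you should note (as the paper does) that one may first clear denominators to assume $P$ is a polynomial, which does not affect the Gauss norms.
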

\begin{proof}
We can assume that $P$ is a nonzero polynomial. Fix $\nu_0\in M_K\setminus S_K$. Let $\bzeta\in\mu_{\infty}^n$ be a torsion point such that $P(\bzeta)$ is an $S$-unit. Let $G=\mathrm{Gal}(\mathbb Q(\bzeta)/K\cap \mathbb Q(\bzeta)).$ We can bound the index by $[\mathrm{Gal}(\mathbb Q(\bzeta)/\mathbb Q):G]=[K\cap \mathbb Q(\bzeta): \mathbb Q]\le [K:\mathbb Q]$. The fixed field $K\cap \mathbb Q(\bzeta)$ is a subfield of $K$, so there are only finitely many possibilities and thus $f_G$ is bounded in terms of $K$ by Lemma \ref{lem:conductor}. By Theorem \ref{thm:lang} the group $H=\mathrm{Gal}(K(\bzeta)/K)$ is isomorphic to $G$ by restriction to $\mathbb Q(\bzeta)$. 
	
	Let $\nu|\nu_0$ be a place in $M_{K(\bzeta)}\setminus S_{K(\bzeta)}$. Since conjugates of $P(\bzeta)$ are $S$-units by Lemma \ref{lem:conjSunit}, we have $$\frac{1}{\#G}\sum_{\sigma\in G} \log |P(\sigma(\bzeta))|_\nu = \frac{1}{\#H}\sum_{\sigma\in H} \log |P(\sigma(\bzeta))|_\nu= \frac{1}{\#H}\sum_{\sigma\in H} \log |\sigma(P(\bzeta))|_\nu=0.$$
	
	Let us apply Lemma \ref{lem:numbField}. Since the numerator of the error term is bounded independently of $\bzeta$, we find that there is $\kappa=\kappa(n)>0$ such that $$0=\log|P|_{\nu_0} + O_{n,K,p,P}(\delta(\bzeta)^{-\kappa})$$ for all $\bzeta\in\mu_{\infty}^n$ such that $P(\bzeta)$ is an $S$-unit. As $\delta(\bzeta)$ is as large as we wish by hypothesis, we find that $|P|_{\nu_0}=1$. Since $\nu_0\in M_K\setminus S_K$ was arbitrary, we can conclude that the coefficients of $P$ are $S$-integral.\end{proof}

\subsection{Extended cyclotomic polynomials and proof of Theorem \ref{thm:Sunits}}
We use the equidisribution results and the product formula to find a relation between the Mahler measure and the $p$-adic absolute values of $Q=\prod_{\tau:K\to\mathbb C}\tau(P)$. Using a characterisation of Boyd \cite{Boyd} of all the polynomials with integer coefficients and Mahler measure zero, we see that in this case the zero locus of $P$ is a finite union of torsion cosets. Note that for all $Q\in\mathbb Q[X_1,\dots, X_n]\setminus\{0\}$ we have $|Q|_p=1$ for all but finitely many primes $p$, thus the sum $\sum_{p\in\mathbb P}\log |Q|_p$ is well defined.

\begin{lemma}\label{lem:eqPQp}Let $S$ be a finite set of primes, $K\subseteq\mathbb{C}$ a number field. Let $P\in K[X_1,\dots,X_n]\setminus\{0\}$ be a polynomial, $Q=\prod_{\tau: K\to \mathbb{C}}\tau(P)$. Assume that $\tau(P)$ is essentially atoral for all $\tau: K\to \mathbb C$.\\
If $\{\delta(\boldsymbol{\zeta}): \boldsymbol{\zeta}\in \mu_\infty^n,\; P(\boldsymbol\zeta)\text{ is an }S\text{-unit}\}$ is unbounded, then $m(Q)+\sum_{p\in \mathbb P} \log|Q|_p=0$.\end{lemma}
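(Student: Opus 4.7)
The plan is to extract the equation $m(Q)+\sum_p \log|Q|_p=0$ by applying the product formula to the $S$-unit $P(\boldsymbol\zeta)$, then replacing each local sum by its limit using the equidistribution results already established. By hypothesis there is a sequence of torsion points $\boldsymbol\zeta_k\in\mu_\infty^n$ with $\delta(\boldsymbol\zeta_k)\to\infty$ and $P(\boldsymbol\zeta_k)$ an $S$-unit; the identity is a constant so it is enough to prove $m(Q)+\sum_{p\in\mathbb P}\log|Q|_p=O(\delta(\boldsymbol\zeta)^{-\kappa})$ for such points.

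First I would use Lemma \ref{lem:Sint} to conclude that the coefficients of $P$ are $S$-integers, and more precisely that $|P|_\nu=1$ for every $\nu\in M_K\setminus S_K$. Transporting this to the Galois closure $L$ of $K$ via the action of $\mathrm{Gal}(L/\mathbb Q)$ on places and using the multiplicativity of the Gauss norm, I would deduce $|Q|_p=1$ for every rational prime $p\notin S$, so that $\sum_{p\in\mathbb P}\log|Q|_p=\sum_{p\in S}\log|Q|_p$ is a finite sum. Next, since $P(\boldsymbol\zeta)\in K(\boldsymbol\zeta)^*$ is an $S$-unit, the product formula gives
$$\frac{1}{[K(\boldsymbol\zeta):\mathbb Q]}\sum_{\nu\in M^\infty(K(\boldsymbol\zeta))}d_\nu\log|P(\boldsymbol\zeta)|_\nu+\sum_{p\in S}\frac{1}{[K(\boldsymbol\zeta):\mathbb Q]}\sum_{\substack{\nu\in M^0(K(\boldsymbol\zeta))\\ \nu\mid p}}d_\nu\log|P(\boldsymbol\zeta)|_\nu=0.$$

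For the archimedean contribution I would use the standard identity $\sum_{\nu\in M^\infty(K(\boldsymbol\zeta))}d_\nu\log|x|_\nu=\sum_{\sigma:K(\boldsymbol\zeta)\to\mathbb C}\log|\sigma(x)|$ and then apply Lemma \ref{lem:arch} to rewrite this term as $m(Q)/[K:\mathbb Q]+O(\delta(\boldsymbol\zeta)^{-\kappa})$. For each $p\in S$ I would fix a place $\nu_0\in M^0(L(\boldsymbol\zeta))$ above $p$ and, via Lemma \ref{lem:embed} together with the bijection between embeddings $K(\boldsymbol\zeta)\hookrightarrow\mathbb C$ and embeddings into $\overline{\mathbb Q_p}$, identify
$$\sum_{\substack{\nu\in M^0(K(\boldsymbol\zeta))\\ \nu\mid p}}d_\nu\log|P(\boldsymbol\zeta)|_\nu=\sum_{\sigma:K(\boldsymbol\zeta)\to\mathbb C}\log|\sigma(P(\boldsymbol\zeta))|_{\nu_0},$$
which is the left hand side of Lemma \ref{lem:pad}. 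Since $Q\in\mathbb Q[X_1,\ldots,X_n]$ we have $|Q|_{\nu_0}=|Q|_p$, so that Lemma \ref{lem:pad} produces $\log|Q|_p/[K:\mathbb Q]+O(\delta(\boldsymbol\zeta)^{-\kappa})$ for this term.

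Assembling everything gives $m(Q)/[K:\mathbb Q]+\sum_{p\in S}\log|Q|_p/[K:\mathbb Q]=O(\delta(\boldsymbol\zeta)^{-\kappa})$; letting $\delta(\boldsymbol\zeta)\to\infty$ along the hypothesized sequence yields the claim. The main obstacle is the bookkeeping in the non-archimedean step: one has to pass cleanly between the decomposition of the product formula over places $\nu\mid p$ of $K(\boldsymbol\zeta)$ and the sum over complex embeddings $\sigma:K(\boldsymbol\zeta)\to\mathbb C$ evaluated at the $p$-adic absolute value $|\cdot|_{\nu_0}$ of $L(\boldsymbol\zeta)$, which is precisely the formulation Lemma \ref{lem:pad} is tailored to. A minor but worthwhile detail to verify is that $Q$ really has rational coefficients (it is Galois invariant), so that the $\nu_0$-adic Gauss norm of $Q$ equals its $p$-adic Gauss norm and does not depend on the auxiliary choice of $\nu_0$.
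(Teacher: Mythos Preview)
Your plan is correct and follows essentially the same route as the paper's proof: product formula applied to the $S$-unit $P(\boldsymbol\zeta)$, Lemmas~\ref{lem:arch} and~\ref{lem:pad} for the archimedean and $p$-adic pieces, and Lemma~\ref{lem:Sint} for the primes outside $S$. The only cosmetic differences are that the paper carries out the bookkeeping in the Galois closure $L(\boldsymbol\zeta)$ rather than $K(\boldsymbol\zeta)$ (handling the place/embedding identification via the Galois action on places, equations~\eqref{eq:forts}--\eqref{eq:norms}, and Lemma~\ref{lem:absInd}) and defers the verification $|Q|_p=1$ for $p\notin S$ to the end.
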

\begin{proof}Let $\boldsymbol\zeta\in\mu_\infty^n$ be a torsion point such that $u=P(\boldsymbol\zeta)\text{ is an }S\text{-unit}$. In particular we have $u\ne 0$. Let $L$ be the Galois closure of $K$. For all $\sigma\in\mathrm{Gal}(L(\boldsymbol\zeta)/\mathbb Q)$ the map $|\cdot|_\nu\mapsto |\sigma(\cdot)|_\nu$ is a bijection of $M^0_{L(\boldsymbol\zeta)}$ to itself, such that the image lies over the same rational prime $p$. Since $S_{L(\boldsymbol\zeta)}^0$ contains either no or all places over $p$ we have \begin{equation}\label{eq:forts}\sum_{\nu\in S^0_{L(\boldsymbol\zeta)}} d_\nu\log|\sigma(u)|_\nu= \sum_{\nu\in S^0_{L(\boldsymbol\zeta)}} d_\nu \log|u|_\nu.\end{equation}

Let $\tau:K\to\mathbb C$ and $\tilde{\tau}:L(\bzeta)\to \mathbb C$ an extension. Then by the same argument we have
\begin{equation}\label{eq:norms}
	\sum_{\nu\in S^0_{L(\boldsymbol\zeta)}} d_\nu\log|\tau(P)|_\nu= \sum_{\nu\in S^0_{L(\boldsymbol\zeta)}} d_\nu\log|\tilde\tau(P)|_\nu= \sum_{\nu\in S^0_{L(\boldsymbol\zeta)}} d_\nu \log|P|_\nu
\end{equation}

Therefore by the product formula, and since $u$ is an $S$-unit we have $$-\sum_{\sigma\in\mathrm{Gal}(L(\boldsymbol\zeta)/\mathbb Q)}\log |\sigma(u)|
\stackrel{PF}=\sum_{\nu\in S^0_{L(\boldsymbol\zeta)}}d_\nu\log |u|_\nu\stackrel{\eqref{eq:forts}}=\sum_{\nu\in S^0_{L(\boldsymbol\zeta)}}d_\nu \frac{1}{[L(\boldsymbol\zeta):\mathbb Q]}\sum_{\sigma\in\mathrm{Gal}(L(\boldsymbol\zeta)/\mathbb Q)}\log |\sigma(u)|_\nu.$$ 
We divide by $[L(\boldsymbol\zeta):\mathbb Q]$ and use Lemmas \ref{lem:arch} and \ref{lem:pad} and $\prod_{\tau:L\to\mathbb C}\tau(P)=Q^{[L:K]}$ to find $\kappa>0$ such that $$\frac{[L:K]m(Q)}{[L:\mathbb Q]}+\frac{1}{[L(\boldsymbol\zeta):\mathbb Q]}\sum_{\nu\in S^0_{L(\boldsymbol\zeta)}}d_\nu \log |P|_\nu =O_{L,P}(\delta(\boldsymbol\zeta)^{-\kappa}) +\frac{1}{[L(\boldsymbol\zeta):\mathbb Q]}\sum_{\nu\in S^0_{L(\boldsymbol\zeta)}}d_\nu O_{L,P,p}(\delta(\boldsymbol\zeta)^{-\kappa})$$ as $\delta(\boldsymbol\zeta)\to \infty$.
Since $\sum_{\substack{\nu\in M(L(\boldsymbol\zeta))\\ \nu| p}}d_\nu = [L(\boldsymbol\zeta):\mathbb Q]$ and by definition every $\nu\in S^0_{L(\bzeta)}$ lies above a prime $p\in S$, the right hand side is $O_{L,P,S}(\delta(\boldsymbol\zeta)^{-\kappa})$ as ${\delta(\boldsymbol\zeta)\to\infty}$. With \eqref{eq:norms} we find $$\sum_{\nu\in S^0_{L(\boldsymbol\zeta)}} d_\nu\log|P|_\nu= \frac{1}{[K:\mathbb Q]}\sum_{\tau:K\to \mathbb C}\sum_{\nu\in S^0_{L(\boldsymbol\zeta)}} d_\nu \log|\tau(P)|_\nu= \frac{1}{[K:\mathbb Q]}\sum_{\nu\in S^0_{L(\boldsymbol\zeta)}} d_\nu \log|Q|_\nu.$$
Since the polynomial $Q$ has rational coefficients, we find by Lemma \ref{lem:absInd} that $$\frac{1}{[L(\boldsymbol\zeta):\mathbb Q]}\sum_{\nu\in S^0_{L(\boldsymbol\zeta)}} d_\nu\log|P|_\nu=\frac{1}{[K:\mathbb Q]}\frac{1}{[L(\bzeta):\mathbb Q]}\sum_{\nu\in S^0_{L(\bzeta)}}\log |Q|_\nu=\frac{1}{[K:\mathbb Q]}\sum_{p\in S}\log |Q|_p.$$ Hence we have $$\frac{m(Q)}{[K:\mathbb Q]}+\frac{1}{[K:\mathbb Q]}\sum_{p\in S}\log |Q|_p=O_{L,P,S}(\delta(\boldsymbol\zeta)^{-\kappa})\text{ as }\delta(\boldsymbol\zeta)\to \infty.$$ Note that the left hand side does not depend on $\boldsymbol\zeta$. Since by assumption there exists $\boldsymbol\zeta\in\mu_\infty^n$ such that $\delta(\boldsymbol\zeta)$ is arbitrary large we must have $m(Q)+\sum_{p\in S}\log |Q|_p =0$. 

Let $p\in \mathbb P\setminus S.$ We want to see that $|Q|_p=1$. Let $\tau\in\mathrm{Gal}(L/\mathbb Q)$ be an automorphism. Let $\nu|p$ be a place of $L$. Then there exists a place $\nu'|p$ of $L$ such that $|\tau(\cdot)|_\nu=|\cdot|_{\nu'}$. By Lemma \ref{lem:Sint} we know that $|P|_{\nu'}=1$ and hence, we find $|\tau(P)|_\nu=1$. 

Since $\tau\in \mathrm{Gal}(L/\mathbb Q)$ was arbitrary, we find that $$|Q|_p^{[L:K]}=|Q|_\nu^{[L:K]} = \prod_{\tau:L\to \mathbb C}|\tau(P)|_\nu = 1$$ by the Gauss lemma. So we have $|Q|_p=1$ and hence $\sum_{p\in \mathbb P\setminus S} \log|Q|_p =0$, which together with $m(Q)+\sum_{p\in S}\log |Q|_p =0$ implies the lemma.\end{proof}

\begin{definition}Let $n\in\mathbb{N}$. We call $P\in\mathbb{Z}[X_1,\dots, X_n]$ an extended cyclotomic polynomial, if we can write $P=X_1^{b_1}\dots X_n^{b_n}\phi_m(X_1^{a_1}\dots X_n^{a_n})$, where $\phi_m$ is the $m$-th irreducible cyclotomic polynomial, the $a_1,\dots, a_n\in\mathbb{Z}$ are coprime and $b_i=\max\{0, -a_i\deg\phi_m\}$ are minimal such that $P$ is a polynomial.
Then we define $K_n$ as the set of polynomials which are products of $\pm X_1^{a_1}\dots X_n^{a_n}, a_i\in\mathbb{N}_0$ and extended cyclotomic polynomials.\end{definition}

The following lemma requires a result of Boyd \cite{Boyd}.

\begin{lemma}\label{lem:torsionCosets}Let $Q\in\mathbb{Q}[X_1,\dots,X_n]\setminus\{0\}$ be a polynomial such that $m(Q)+\sum_{p\in \mathbb{P}}\log |Q|_p =0$. Then the zero set of $Q$ in $\mathbb{G}_m^n$ is a finite union of torsion cosets.\end{lemma}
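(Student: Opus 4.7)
The plan is to reduce the hypothesis to the statement that a primitive integer polynomial has Mahler measure zero, and then apply Boyd's characterization of such polynomials. First I would write $Q = c Q'$ where $c\in\mathbb{Q}^*$ and $Q'\in\mathbb{Z}[X_1,\dots,X_n]$ is primitive (its coefficients generate the unit ideal in $\mathbb{Z}$). By Gauss's lemma, primitivity forces $|Q'|_p = 1$ for every prime $p$; multiplicativity of the Gauss norm then gives $|Q|_p = |c|_p$. Additivity of the Mahler measure yields $m(Q) = \log|c| + m(Q')$. Combining these with the product formula $\log|c| + \sum_{p\in\mathbb{P}}\log|c|_p = 0$ on $\mathbb{Q}^*$, the hypothesis $m(Q)+\sum_{p\in\mathbb{P}}\log|Q|_p=0$ collapses to $m(Q')=0$.

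Next I would invoke Boyd's theorem from \cite{Boyd}, which asserts that a primitive polynomial $Q'\in\mathbb{Z}[X_1,\dots,X_n]\setminus\{0\}$ satisfies $m(Q')=0$ if and only if $Q'\in K_n$, i.e.\ $Q'$ is, up to sign, a product of monomials $X_1^{a_1}\cdots X_n^{a_n}$ and extended cyclotomic polynomials. Pure monomials never vanish on $\mathbb{G}_m^n$, so the zero locus of $Q'$ in $\mathbb{G}_m^n$ (and hence that of $Q$) coincides with the union of the zero loci of its extended cyclotomic factors.

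Finally I would check that each such factor cuts out a finite union of torsion cosets. An extended cyclotomic factor has the form $\mathbf{X}_n^{\mathbf{b}}\phi_m(\mathbf{X}_n^{\mathbf{a}})$ with $\gcd(a_1,\dots,a_n)=1$, so its zero set in $\mathbb{G}_m^n$ is $\{\mathbf{x}\in\mathbb{G}_m^n : \mathbf{x}^{\mathbf{a}}\text{ is a primitive } m\text{-th root of unity}\}$. The coprimality of the $a_i$ lets us extend $\mathbf{a}$ to a $\mathbb{Z}$-basis of $\mathbb{Z}^n$, which is equivalent to the algebraic subgroup $H=\{\mathbf{x}\in\mathbb{G}_m^n:\mathbf{x}^{\mathbf{a}}=1\}$ being connected (by the classification of closed subgroups of $\mathbb{G}_m^n$, e.g.\ Corollary 3.2.15 in \cite{BombGub}). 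For each of the $\varphi(m)$ primitive $m$-th roots of unity $\rho$, choose any torsion point $\mathbf{x}_\rho\in\mathbb{G}_m^n$ with $\mathbf{x}_\rho^{\mathbf{a}}=\rho$; the fiber above $\rho$ of the homomorphism $\mathbf{x}\mapsto\mathbf{x}^{\mathbf{a}}$ is then precisely the torsion coset $\mathbf{x}_\rho H$. Taking the union over the finitely many $\rho$ and over the finitely many extended cyclotomic factors of $Q'$ exhibits the zero locus of $Q$ as a finite union of torsion cosets.

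The only nontrivial input is Boyd's theorem; after $m(Q')=0$ has been extracted, the main risk is a bookkeeping error in unpacking the definition of $K_n$ and in checking connectedness of $H$, but no conceptual obstacle arises there.
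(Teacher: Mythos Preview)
Your proposal is correct and follows essentially the same route as the paper: factor out the content to get a primitive $Q'\in\mathbb{Z}[X_1,\dots,X_n]$, use the product formula and additivity of $m$ to reduce the hypothesis to $m(Q')=0$, apply Boyd's theorem to get $Q'\in K_n$, and then read off that each extended cyclotomic factor cuts out a finite union of torsion cosets. Your justification that each fiber $\{\mathbf{x}:\mathbf{x}^{\mathbf{a}}=\rho\}$ is a single torsion coset (via connectedness of $H$ from the coprimality of the $a_i$) is in fact more careful than the paper, which simply asserts that $\bigcup_{\mathrm{ord}(\zeta)=m}\{\mathbf{x}:\mathbf{x}^{\mathbf{a}}=\zeta\}$ is a finite union of torsion cosets without spelling this out.
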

\begin{proof}Let $c$ denote the content of $Q$ and $Q'=Q/c\in\mathbb{Z}[X]$. Then we have $|Q'|_p=1$ for all $p\in\mathbb{P}$. The Mahler measure of $c\in\mathbb{Q}\setminus\{0\}$ equals $\log|c|$. Then using additivity of the Mahler measure and the product formula we find by hypothesis that $$0=m(Q)+\sum_{p\in \mathbb{P}}\log |Q|_p=\log|c|+m(Q')+\sum_{p\in \mathbb{P}}\log |c|_p+\sum_{p\in \mathbb{P}}\log |Q'|_p=m(Q').$$ Thus by Theorem 1 in \cite{Boyd} we have $Q'\in K_n$. Let $a_1,\dots,a_n\in\mathbb{Z}$ be coprime. Then the zero set of $\phi_m(X^{a_1} \dots X^{a_n})$ in $\mathbb{G}_m^n$ is given by $$\{(x_1,\dots,x_n)\in\mathbb{G}_m^n: x_1^{a_1}\dots x_n^{a_n}=\zeta, \mathrm{ord}(\zeta)=m\} = \bigcup_{\mathrm{ord}(\zeta)=m}\{\mathbf x\in\mathbb{G}_m^n:{\mathbf{x^a}}=\zeta\},$$ a finite union of torsion cosets. Hence the zero set in $\mathbb{G}_m^n$ of any polynomial in $K_n$ is a finite union of torsion cosets. In particular this holds for the zero set of $Q'\in K_n$ which equals the vanishing locus of $Q$ in $\mathbb{G}_m^n$.\end{proof}

\begin{lemma}\label{lem:QcosPcos}Let $K\subseteq\mathbb{C}$ be a number field, $P\in K[X_1,\dots,X_n]$ and $Q=\prod_{\tau\in\mathrm{Gal}(K/\mathbb{Q})}\tau(P)$. If the zero locus of $Q$ in $\mathbb{G}_m^n$ is a finite union of torsion cosets the same is true for $P$.\end{lemma}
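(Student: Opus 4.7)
The plan is to exploit the elementary fact that $P$ divides $Q$: the identity embedding of $K$ into $\mathbb{C}$ is one of the indices in the product defining $Q$, so $P\mid Q$ in $K[X_1,\dots,X_n]$ and therefore also in $\overline{K}[X_1,\dots,X_n]$, which is a unique factorisation domain.

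First I would factor $P$ and $Q$ into irreducibles in $\overline{K}[X_1,\dots,X_n]$. By the divisibility $P\mid Q$, every irreducible factor of $P$ is, up to a nonzero scalar, an irreducible factor of $Q$. Next I would translate this into a statement about irreducible components in $\mathbb{G}_m^n$. If $P_i$ is an irreducible factor of $P$ which is not a unit multiple of some $X_j$, then $V(P_i)\subseteq\mathbb{A}^n$ is an irreducible hypersurface not contained in any coordinate hyperplane, so $V(P_i)\cap\mathbb{G}_m^n$ is a non-empty open subset of the irreducible set $V(P_i)$ and is therefore irreducible itself; factors that are unit multiples of some $X_j$ contribute nothing to $V(P)\cap\mathbb{G}_m^n$. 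Consequently the irreducible components of $V(P)\cap\mathbb{G}_m^n$ are precisely the sets $V(P_i)\cap\mathbb{G}_m^n$ for the irreducible factors $P_i$ of $P$ that are coprime to $X_1\cdots X_n$, and analogously for $Q$. Combining this with the previous paragraph, every irreducible component of $V(P)\cap\mathbb{G}_m^n$ appears as an irreducible component of $V(Q)\cap\mathbb{G}_m^n$.

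Finally I would invoke the hypothesis. Write $V(Q)\cap\mathbb{G}_m^n=\bigcup_i C_i$ with each $C_i$ a torsion coset. Since torsion cosets are translates of connected algebraic subgroups and hence irreducible, the irreducible components of $V(Q)\cap\mathbb{G}_m^n$ are simply the maximal $C_i$. By Krull's Hauptidealsatz $V(Q)\cap\mathbb{G}_m^n$ has pure codimension one, so these maximal $C_i$ are codimension-one torsion cosets. From the preceding step, every irreducible component of $V(P)\cap\mathbb{G}_m^n$ is one of these codimension-one torsion cosets. As $V(P)\cap\mathbb{G}_m^n$ has only finitely many components, it is a finite union of torsion cosets, which proves the lemma.

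The argument is essentially formal; the only minor subtlety is keeping track of monomial factors, which disappear upon intersecting with $\mathbb{G}_m^n$, and confirming that the irreducibility of a hypersurface $V(P_i)$ survives the intersection with the open subset $\mathbb{G}_m^n\subseteq\mathbb{A}^n$. I do not foresee any genuine obstacle.
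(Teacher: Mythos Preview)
Your proposal is correct and follows essentially the same approach as the paper: both arguments use that $V(P)\cap\mathbb{G}_m^n\subseteq V(Q)\cap\mathbb{G}_m^n$, identify irreducible components, and then invoke the dimension comparison (codimension one on both sides) to conclude that each irreducible component of $V(P)\cap\mathbb{G}_m^n$ actually equals a torsion coset appearing in $V(Q)\cap\mathbb{G}_m^n$. The paper's version is simply terser, omitting your explicit discussion of monomial factors and of irreducibility surviving restriction to the open set $\mathbb{G}_m^n$.
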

\begin{proof}Let $V\subseteq \mathbb{G}_m^n$ be an irreducible component of the zero locus of $P$ in $\mathbb{G}_m^n$. Since it is irreducible it is contained in an irreducible component of the vanishing locus of $Q$. By hypothesis such an irreducible component is a torsion coset. Since all components have dimension $n-1$ they must be equal and therefore also all irreducible components of the zero locus of $P$ are torsion cosets.\end{proof}

\begin{proof}[Proof of Theorem \ref{thm:Sunits}]
We may assume that $P$ is a polynomial. We assume that the set $\{\delta(\boldsymbol{\zeta}):{\boldsymbol{\zeta}\in \mu_\infty^n},\; P(\boldsymbol{\zeta})\text{ is an }S\text{-unit}\}$ is unbounded. We have to prove that the vanishing locus of $P$ in $\mathbb{G}_{m}^n$ must be a finite union of torsion cosets and that $P$ has $S$-integral coefficients. The latter holds by Lemma \ref{lem:Sint}.

Let $Q=\prod_{\tau:K\to\mathbb{C}}{\tau(P)}\in \mathbb Q[X_1,\dots, X_n]$. By Lemma \ref{lem:eqPQp} we find ${m(Q)+ \sum_{p\in \mathbb P} \log |Q|_p =0}.$ We can apply Lemma \ref{lem:torsionCosets} and conclude that the vanishing locus of $Q$ in $\mathbb{G}_{m}^n$ is a finite union of torsion cosets and so is the zero set of $P$ in $\mathbb{G}_{m}^n$ by Lemma \ref{lem:QcosPcos}.\end{proof}

\bibliographystyle{alpha}
\bibliography{padicEquidistr}

\begin{thebibliography}{Duk07}

\bibitem[BG06]{BombGub}
Enrico Bombieri and Walter Gubler.
\newblock {\em Heights in {D}iophantine geometry}, volume~4 of {\em New
  Mathematical Monographs}.
\newblock Cambridge University Press, Cambridge, 2006.

\bibitem[BIR08]{BIR}
Matthew Baker, Su-ion Ih, and Robert Rumely.
\newblock A finiteness property of torsion points.
\newblock {\em Algebra Number Theory}, 2(2):217--248, 2008.

\bibitem[Boy81]{Boyd}
David~W. Boyd.
\newblock Kronecker's {T}heorem and {L}ehmer's {P}roblem for {P}olynomials in
  {S}everal {V}ariables.
\newblock {\em J. Number Theory}, 13(1):116--121, 1981.

\bibitem[DH24]{Atoral}
Vesselin Dimitrov and Philipp Habegger.
\newblock Galois orbits of torsion points near atoral sets.
\newblock {\em Algebra $\&$ Number Theory}, 18(11):1945--2001, 2024.

\bibitem[Duk07]{Duke}
W.~Duke.
\newblock A combinatorial problem related to {M}ahler's measure.
\newblock {\em Bull. Lond. Math. Soc.}, 39(5):741--748, 2007.

\bibitem[Ih11]{Ih}
Su-Ion Ih.
\newblock A nondensity property of preperiodic points on the projective plane.
\newblock {\em J. Lond. Math. Soc.}, 83(3):691--710, 2011.

\bibitem[Lan02]{Lang}
Serge Lang.
\newblock {\em Algebra}, volume 211 of {\em Graduate Texts in Mathematics}.
\newblock Springer-Verlag, New York, third edition, 2002.

\bibitem[Lau84]{Laurent}
Michel Laurent.
\newblock \'{E}quations diophantiennes exponentielles.
\newblock {\em Invent. Math.}, 78(2):299--327, 1984.

\bibitem[Neu92]{Neukirch}
J\"{u}rgen Neukirch.
\newblock {\em Algebraische {Z}ahlentheorie}.
\newblock Springer-Verlag, Berlin, 1992.

\bibitem[TV96]{TateVoloch}
John Tate and Jos\'{e}~Felipe Voloch.
\newblock Linear forms in {$p$}-adic roots of unity.
\newblock {\em Internat. Math. Res. Notices}, (12):589--601, 1996.

\end{thebibliography}
\end{document}